\numberwithin{equation}{section}
\newtheorem{theorem}{Theorem}[section]
\newtheorem{lemma}[theorem]{Lemma}
\newtheorem{proposition}[theorem]{Proposition}
\newtheorem{corollary}[theorem]{Corollary}
\theoremstyle{definition}
\newtheorem{example}[theorem]{Example}
\newtheorem{remark}[theorem]{Remark}
\numberwithin{equation}{section}
\begin{document}


\baselineskip=17pt









\title[Quantifier elimination in quasianalytic structures]
{Quantifier elimination in quasianalytic structures via
non-standard analysis}

\author[K.\ J.\ Nowak]{Krzysztof Jan Nowak}

\subjclass[2000]{Primary: 03C10, 32S45, 26E10; Secondary: 03C64,
32B20, 14P15.}

\keywords{Quasianalytic structures, quantifier elimination, active
infinitesimals, special cubes and modifications, valuation
property, exchange property, rectilinearization of
quasi-subanalytic functions, Denjoy--Carleman classes.}

\begin{abstract}
The paper is a continuation of our earlier article where we
developed a theory of active and non-active infinitesimals and
intended to establish quantifier elimination in quasianalytic
structures. That article, however, did not attain full generality,
which refers to one of its results, namely the theorem on an
active infinitesimal, playing an essential role in our
non-standard analysis. The general case was covered in our
subsequent preprint, which constitutes a basis for the approach
presented here. We also provide a quasianalytic exposition of the
results concerning rectilinearization of terms and of definable
functions from our earlier research. It will be used to
demonstrate a quasianalytic structure corresponding to a
quasianalytic Denjoy--Carleman class which, unlike the classical
analytic structure, does not admit quantifier elimination in the
language of restricted quasianalytic functions augmented merely by
the reciprocal function $1/x$. More precisely, we construct a
plane definable curve, which indicates both that the classical
theorem by J.~Denef and L.~van den Dries as well as
\L{}ojasiewicz's theorem that every subanalytic curve is
semianalytic are no longer true for quasianalytic structures.
Besides rectilinearization of terms, our construction makes use of
some theorems on power substitution for Denjoy--Carleman classes
and on non-extendability of quasianalytic function germs. The last
result relies on Grothendieck's factorization and open mapping
theorems for (LF)-spaces.
\end{abstract}

\maketitle

\section{Introduction}

The paper is a continuation of our earlier article~\cite{Now2}
devoted to quantifier elimination in quasianalytic structures,
where we developed a theory of active and non-active
infinitesimals in the non-standard models of the universal diagram
of a given quasianalytic structure. This theory allowed us to
simultaneously examine the exchange property and valuation
property for terms in the language $\mathcal{L}$ of restricted
quasianalytic functions augmented by rational powers and,
eventually, to establish quantifier elimination and description of
definable functions by $\mathcal{L}$-terms. That article, however,
did not attain full generality, which refers to one of its
results, namely, the theorem on an active infinitesimal ({\em
op.~cit.\/}, Theorem~4.4) playing an essential role in our
non-standard analysis. The general case was covered in our
subsequent preprint~\cite{Now4} which constitutes a basis for the
approach presented here. Recently, quantifier elimination for
generalized quasianalytic algebras was achieved by J.-P.~Rolin and
T.~Servi~\cite{Rol-Ser} by means of other methods, an essential
ingredient of which was a quasianalytic version of
rectilinearization of definable sets.

\vspace{1ex}

We also provide a quasianalytic exposition of the results
concerning rectilinearization of $\mathcal{L}$-terms and of
definable functions from our article~\cite{Now3}, which is a
technique much more delicate in comparison to rectilinearization
of definable sets. Finally, we shall use this technique to
demonstrate a quasianalytic structure corresponding to a
quasianalytic Denjoy--Carleman class which, unlike the classical
analytic structure, does not admit quantifier elimination in the
language of restricted quasianalytic functions augmented merely by
the reciprocal function $1/x$. The construction was given in our
preprints~\cite{Now7}. This indicates that the classical theorem
by J.~Denef and L.~van den Dries~\cite{Denef-Dries} does no longer
hold for quasianalytic structures. More precisely, we construct a
plane definable curve, which also shows that the classical theorem
of \L{}ojasiewicz~\cite{Loj} that every subanalytic curve is
semianalytic is no longer true for quasianalytic structures.
Besides rectilinearization of terms, our construction applies some
theorems on power substitution for Denjoy--Carleman classes and on
non-extendability of quasianalytic function germs.

\vspace{1ex}

The content of this paper falls into eight sections discussed
briefly below. In Section~2, we provide an exposition of results
on rectilinearization of $\mathcal{L}$-terms from our
paper~\cite{Now3}. Section~3 presents several annotated results on
infinitesimals (including the theorem on an active infinitesimal)
from our paper~\cite{Now2} which are ingredients of one induction
procedure, investigated in the next section. Section~5 establishes
quantifier elimination and description of definable functions by
$\mathcal{L}$-terms as well as general versions of the valuation
property and rectilinearization of definable functions.

\vspace{1ex}

The purpose of the subsequent sections is to construct a
counterexample to the aforementioned problem, formulated
in~\cite{Now4}, whether a quasianalytic structure admits
quantifier elimination in the language augmented merely by the
reciprocal function $1/x$. The construction of a counterexample,
given in Section~8, makes use of rectilinearization of terms as
well as of two function-theoretic theorems about Denjoy--Carleman
classes. The first, established in Section~6, is concerned with
power substitution. The other, presented in Section~7, is a
refinement of the non-extendability theorem given by
V.~Thilliez~\cite{Thil-2}, relying on Grothendieck's factorization
and open mapping theorems for (LF)-spaces.

\vspace{1ex}

Let us recall (cf.~\cite{Now1,Now2,Now5,Now6}) that a
quasianalytic structure $\mathbb{R}_{\mathcal{Q}}$ is the
expansion of the real field by restricted $\mathcal{Q}$-analytic
functions (abbreviated to $\mathcal{Q}$-functions) determined by a
system $\mathcal{Q} =(\mathcal{Q}_{m})_{m \in \mathbb{N}}$ of
sheaves of local $\mathbb{R}$-algebras of smooth functions on
$\mathbb{R}^{m}$, subject to the following six conditions:

\begin{enumerate}
  \item each algebra $\mathcal{Q}(U)$ contains the
      restrictions of polynomials;
  \item $\mathcal{Q}$ is closed under composition, i.e.\ the
      composition of $\mathcal{Q}$-maps is a $\mathcal{Q}$-map (whenever it is
      well defined);
  \item $\mathcal{Q}$ is closed under inverse, i.e.\ if
      $\varphi : U \longrightarrow V$ is a $\mathcal{Q}$-map between
      open subsets $U,V \subset \mathbb{R}^{n}$, $a \in U$, $b \in
      V$ and if $\partial \varphi/\partial x (a) \neq 0$, then
      there are neighbourhoods $U_{a}$ and $V_{b}$ of $a$ and
      $b$, respectively, and a $\mathcal{Q}$-diffeomorphism $\psi: V_{b}
      \longrightarrow U_{a}$ such that $\varphi \circ \psi$ is
      the identity map on $V_{b}$;
  \item $\mathcal{Q}$ is closed under differentiation;
  \item $\mathcal{Q}$ is closed under division by a
      coordinate, i.e.\ if a function $f \in \mathcal{Q}(U)$
      vanishes for $x_{i}=a_{i}$, then $f(x)=
      (x_{i}-a_{i})g(x)$ with some $g \in \mathcal{Q}(U)$;
  \item $\mathcal{Q}$ is quasianalytic, i.e.\ if $f \in
      \mathcal{Q}(U)$ and the Taylor series $\widehat{f}_{a}$
      of $f$ at a point $a \in U$ vanishes, then $f$ vanishes
      in the vicinity of $a$.
\end{enumerate}


Note that $\mathcal{Q}$-analytic maps (abbreviated to
$\mathcal{Q}$-maps) give rise, in the ordinary manner, to the
category $\mathcal{Q}$ of $\mathcal{Q}$-manifolds, which is a
subcategory of that of smooth manifolds and smooth maps.
Similarly, $\mathcal{Q}$-analytic, $\mathcal{Q}$-semianalytic and
$\mathcal{Q}$-subanalytic sets can be defined. The above
conditions ensure some (limited) resolution of singularities in
the category $\mathcal{Q}$, including transformation to a normal
crossing by blowing up (cf.~\cite{BM0,BM,Rol-Spei-Wil}), upon
which the geometry of quasianalytic structures relies; especially,
in the absence of their good algebraic properties
(cf.~\cite{Rol-Spei-Wil,Now1,Now2,Now4}). Consequently, the
structure $\mathbb{R}_{\mathcal{Q}}$ is model complete and
o-minimal. Its definable subsets coincide with those subsets of
$\mathbb{R}^{n}$, $n \in \mathbb{N}$, that are
$\mathcal{Q}$-subanalytic in a semialgebraic compactification of
$\mathbb{R}^{n}$. On the other hand, every polynomially bounded,
o-minimal structure $\mathcal R$ determines a quasianalytic system
of sheaves of germs of smooth functions that are locally definable
in $\mathcal R$.

\vspace{1ex}

The examples of such categories are provided by quasianalytic
Denjoy--Carleman classes $\mathcal{Q}_{M}$, where $M=(M_{n})_{n\in
\mathbb{N}}$ are increasing sequences with $M_{0}=1$. The class
$\mathcal{Q}_{M}$ consists of smooth functions $f(x) =
f(x_{1},\ldots,x_{m})$ in $m$ variables, $m \in \mathbb{N}$, which
are locally submitted to the following growth condition for their
derivatives:
$$ \left| \partial^{|\alpha|} f
/\partial x^{\alpha} (x) \right| \leq C \cdot
   R^{|\alpha|} \cdot |\alpha|! \cdot M_{|\alpha|} \ \ \
   \mbox{ for all } \ \ \alpha \in \mathbb{N}^{n}, $$
with some constants $C, R >0 $ depending only on the vicinity of a
given point. This growth condition is often formulated in a
slightly different way:
$$ \left| \partial^{|\alpha|} f
/\partial x^{\alpha} (x) \right| \leq C \cdot
   R^{|\alpha|} \cdot M_{|\alpha|}' \ \ \
   \mbox{ for all } \ \ \alpha \in \mathbb{N}^{n}, $$
where $M_{n}' = n! M_{n}$. Obviously, the class $\mathcal{Q}_{M}$
contains the real analytic functions.

\vspace{1ex}

In order to ensure some important algebraic and analytic
properties of the class $\mathcal{Q}_{M}$, it suffices to assume
that the sequence $M$ or $M'$ is log-convex. The latter implies
that it is closed under multiplication (by virtue of the Leibniz
formula). The former assumption is stronger and implies that it is
closed under composition (Roumieu~\cite{Rou}) and under inverse
(Komatsu~\cite{Kom}); see also~\cite{BM}. Hence the set
$\mathcal{Q}_{m}(M)$ of germs at $0 \in \mathbb{R}^{m}$ of
$\mathcal{Q}_{M}$-analytic functions is a local ring. Then,
moreover, the class $\mathcal{Q}_{M}$ is quasianalytic iff
$$ \sum_{n=0}^{\infty} \, \frac{M_{n}}{(n+1)M_{n+1}} = \infty $$
(the Denjoy--Carleman theorem; see e.g.~\cite{Ru}), and is closed
under differentiation and under division by a coordinate iff
$$ \sup_{n} \, \sqrt[n]{\left( \frac{M_{n+1}}{M_{n}} \right)} <
   \infty $$
(cf.~\cite{M2,Thil}). It is well-known (cf.~\cite{C,C-M,Thil})
that, given two log-convex sequences $M$ and $N$, the inclusion
$\mathcal{Q}_{M} \subset \mathcal{Q}_{N}$ holds iff there is a
constant $C>0$ such that $M_{n} \leq C^{n}N_{n}$ for all $n \in
\mathbb{N}$ or, equivalently,
$$ \sup \, \left\{  \sqrt[n]{\frac{M_{n}}{N_{n}}}: \  n \in \mathbb{N} \right\} < \infty. $$

\section{Rectilinearization of terms}

In our paper~\cite{Now3} we established several results concerning
rectilinearization of functions definable by a Weierstrass system.
It was done in two stages: first, we proved that every definable
function is given piecewise by finitely many terms in the language
augmented by rational powers; next, we proceed with
rectilinearization of terms using transformation to a normal
crossing by blowing up and induction with respect to the
complexity of terms. That second stage can be repeated verbatim
for the case of quasianalytic structures. We begin with suitable
terminology.

\vspace{1ex}

By a quadrant in $\mathbb{R}^{m}$ we mean a subset of
$\mathbb{R}^{m}$ of the form:
$$ \{ x=(x_{1},\ldots,x_{m}) \in \mathbb{R}^{m}: x_{i} =0, x_{j}>0,
   x_{k}<0 \ \ $$
$$ \mbox{ for } \ i \in I_{0}, j \in I_{+}, k \in I_{-} \} , $$
where $\{ I_{0}, I_{+}, I_{-} \}$ is a partition of $\{ 1,\ldots,m
\}$; its trace $Q$ on the cube $[-1,1]^{m}$ shall be called a
bounded quadrant. The interior $\mbox{Int}\,(Q)$ of the quadrant
$Q$ is its trace on the open cube $(-1,1)^{m}$. A bounded closed
quadrant is the closure $\overline{Q}$ of a bounded quadrant $Q$,
i.e.\ a subset of $\mathbb{R}^{m}$ of the form:
$$ \overline{Q} := \{ x \in [-1,1]^{m}: x_{i} =0, x_{j} \geq 0,
x_{k} \leq 0 \ \ \mbox{ for } \ i \in I_{0}, j \in I_{+}, k \in
I_{-} \} . $$

In this section, by a normal crossing on a bounded quadrant $Q$ in
$\mathbb{R}^{m}$ we mean a function $g$ of the form
$$ g(x) = x^{\alpha} \cdot u(x), $$
where $\alpha \in \mathbb{N}^{m}$ and $u$ is a function
$\mathcal{Q}$-analytic near $\overline{Q}$ which vanishes nowhere
on $\overline{Q}$. Below we state the theorem on
rectilinearization of terms (\cite[Theorem~1]{Now3}) for the case
of the quasianalytic structure $\mathbb{R}_\mathcal{Q}$.

\begin{theorem}\label{rect-th} (Simultaneous Rectilinearization of
$\mathcal{L}$-terms) If
$$ f_{1},\ldots,f_{s}: \mathbb{R}^{m} \longrightarrow \mathbb{R} $$
are functions given piecewise by a finite number of
$\mathcal{L}$-terms, and $K$ is a compact subset of
$\mathbb{R}^{m}$, then there exists a finite collection of
modifications
$$ \varphi_{i}: [-1,1]^{m} \longrightarrow \mathbb{R}^{m}, \ \ \ \
   i=1,\ldots,p, $$
such that

1) each $\varphi_{i}$ extends to a $\mathcal{Q}$-map in a
neighbourhood of the cube $[-1,1]^{m}$, which is a composite of
finitely many local blow-ups with smooth $\mathcal{Q}$-analytic
centers and local power substitutions;

2) the union of the images $\varphi_{i}((-1,1)^{m})$,
$i=1,\ldots,p$, is a neighbourhood of $K$;

3) for every bounded quadrant $Q_{j}$, $j=1,\ldots,3^{m}$, the
restriction to $Q_{j}$ of each function $f_{k} \circ \varphi_{i}$,
$k=1,\ldots,s$, $i=1,\ldots,p$, either vanishes or is a normal
crossing or a reciprocal normal crossing on $Q_{j}$.
\end{theorem}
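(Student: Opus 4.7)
The plan is to proceed by induction on the complexity of $\mathcal{L}$-terms, following the second stage of the argument from \cite{Now3}, which, as the introduction already notes, transfers verbatim to the quasianalytic setting. The only nontrivial geometric input I would invoke is the transformation-to-normal-crossing theorem available in the category $\mathcal{Q}$: every $\mathcal{Q}$-analytic function defined on a neighbourhood of $[-1,1]^{m}$ becomes a normal crossing on each bounded quadrant after a finite composition of local blow-ups with smooth $\mathcal{Q}$-analytic centers. This is precisely the resolution tool cited in the introduction; the rest of the argument is bookkeeping on the syntactic trees of the $\mathcal{L}$-terms representing $f_{1},\ldots,f_{s}$.

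First I would refine the partition so that on each piece a single $\mathcal{L}$-term represents each $f_{k}$, and enlarge the finite family of subterms under consideration to include every node of each such term tree. The base of the induction handles coordinates, which are already monomials on every quadrant, and restricted $\mathcal{Q}$-functions, to which the resolution theorem applies directly. For the inductive step I would analyse each $\mathcal{L}$-operation in turn: products of normal crossings are again normal crossings; the reciprocal of a normal crossing is by definition a reciprocal normal crossing; a rational power $(x^{\alpha} u)^{p/q}$ is handled by a local power substitution $x_{i} \mapsto x_{i}^{q}$ together with a sign correction on the relevant quadrants, which turns the monomial $x^{\alpha}$ into $x^{q\alpha}$ while keeping the unit factor a unit; and composing a restricted $\mathcal{Q}$-function with a $\mathcal{Q}$-map arising from a previous modification yields a new $\mathcal{Q}$-analytic function on a neighbourhood of $[-1,1]^{m}$ to which the resolution theorem applies again.

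The main obstacle is the additive case. Once two subterms have been brought into normal-crossing form $x^{\alpha} u$ and $x^{\beta} v$ on a common quadrant, their sum factors as $x^{\gamma}(x^{\alpha-\gamma} u + x^{\beta-\gamma} v)$ with $\gamma$ taken componentwise minimal, but the bracketed factor is only $\mathcal{Q}$-analytic and may fail to be a unit, vanishing on a proper $\mathcal{Q}$-analytic subset. The key step is to apply the transformation-to-normal-crossing theorem to this bracket, pull every previously normalized subterm back through the new blow-ups, and verify that a normal crossing pulled back through a blow-up with smooth $\mathcal{Q}$-analytic center remains a normal crossing on each quadrant of the new chart, after possibly a further quadrant decomposition. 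Iterating this procedure through the finitely many addition nodes appearing in the term trees of $f_{1},\ldots,f_{s}$ produces the required finite collection $\varphi_{1},\ldots,\varphi_{p}$; compactness of $K$ together with a standard finite covering argument then yields condition (2), while condition (1) is preserved at each step because we only ever compose local blow-ups with smooth $\mathcal{Q}$-analytic centers and local power substitutions.
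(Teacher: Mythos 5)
Your proposal follows essentially the same route as the paper, which does not reprove this theorem but quotes it from \cite{Now3} and observes that the second stage there --- induction on the complexity of $\mathcal{L}$-terms combined with transformation to normal crossings by blow-ups, with local power substitutions accounting for the rational powers --- carries over verbatim to the quasianalytic setting thanks to resolution of singularities in the category $\mathcal{Q}$. Your outline of that induction, including the treatment of the additive case by factoring out a common monomial and resolving the remaining bracket while keeping the earlier normal crossings normal under admissible centers, matches that strategy.
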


\begin{remark}\label{rem-2.1} Observe that, if the functions
$f_{1},\ldots,f_{s}$ are given piecewise by terms in the language
of restricted $\mathcal{Q}$-analytic functions augmented merely by
the reciprocal function $1/x$, then one can require that the
modifications $\varphi_{i}$, $i=1,\ldots,p$, be composite of
finitely many local blow-ups with smooth $\mathcal{Q}$-analytic
centers.
\end{remark}

We now recall the basic notions linked with our method of
decomposition into special cubes, initiated in~\cite{Now1} and
formulated in terms of special modifications
in~\cite[Theorem~2.1]{Now2}. Unless otherwise stated, we say that
$$ \varphi: (0,1)^{d} \longrightarrow S \subset \mathbb{R}^{m} $$
is a special modification if the set $S$ is piecewise given by
$\mathcal{L}$-terms, $\varphi$ is a $\mathcal{Q}$-map in the
vicinity of $[0,1]^{d}$ which is a diffeomorphism of $(0,1)^{d}$
onto $S$, and the inverse map $\psi$ to this diffeomorphism is
piecewise given by a finite number of $\mathcal{L}$-terms. Then
$S$ is called a special cube with associated diffeomorphism
$\varphi$.

\vspace{1ex}

Every bounded subset $F \subset \mathbb{R}^{m}$ given piecewise by
$\mathcal{L}$-terms is a finite union of special cubes ({\em
op.~cit.\/}, Corollary~2.3). Combined with Theorem~\ref{rect-th},
the method of special cubes allows us to obtain the following

\begin{corollary}\label{rect-cor-1} (Desingularization of
$\mathcal{L}$-terms) If
$$ f_{1},\ldots,f_{s}: \mathbb{R}^{m} \longrightarrow \mathbb{R} $$
are bounded functions given piecewise by a finite number of
$\mathcal{L}$-terms, and $K$ is a compact subset of
$\mathbb{R}^{m}$, then there exists a finite collection of special
modifications
$$ \sigma_{i}: (0,1)^{d_{i}} \longrightarrow S_{i} \subset \mathbb{R}^{m}, \ \ \ \
   i=1,\ldots,p, $$
such that

1) the union of the special cubes $S_{i}$, $i=1,\ldots,p$, is a
neighbourhood of $K$;

2) each composite function $f_{k} \circ \varphi_{i}$,
$k=1,\ldots,s$, extends to a $\mathcal{Q}$-function in the
vicinity of $[0,1]^{d_{i}}$.
\end{corollary}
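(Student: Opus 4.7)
The approach is to combine Theorem~\ref{rect-th} (rectilinearization of $\mathcal{L}$-terms) with the decomposition of bounded $\mathcal{L}$-term-definable sets into special cubes cited right after it (Corollary~2.3 of~\cite{Now2}). The boundedness hypothesis on the $f_k$ is the new ingredient: it is precisely what excludes genuine reciprocal normal crossings in the conclusion of Theorem~\ref{rect-th} and thereby upgrades rectilinearization to desingularization.

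I first apply Theorem~\ref{rect-th} to $f_1,\ldots,f_s$ and $K$, obtaining modifications $\varphi_1,\ldots,\varphi_p\colon [-1,1]^m \to \mathbb{R}^m$ whose images cover a neighbourhood of $K$ and such that on every bounded quadrant $Q_j$ each $f_k \circ \varphi_i|_{Q_j}$ is either zero, a normal crossing $x^{\alpha}u(x)$, or a reciprocal normal crossing $1/(x^{\alpha}u(x))$. Now I exploit boundedness. By the definition of the partition $(I_0,I_+,I_-)$ associated to $Q_j$, the expression $1/(x^{\alpha}u)$ forces $\alpha_i=0$ for $i\in I_0$ simply to be well-defined on $Q_j$; a positive exponent on any coordinate in $I_+\cup I_-$ would then make $1/(x^{\alpha}u)$ blow up as that coordinate approaches $0$ within $\overline{Q_j}$, contradicting the bound on $f_k$. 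Hence $\alpha=0$ in the reciprocal case, so the reciprocal normal crossing reduces to $1/u(x)$, which is $\mathcal{Q}$-analytic near $\overline{Q_j}$. In every case $f_k \circ \varphi_i$ extends to a $\mathcal{Q}$-analytic function on an open neighbourhood of $\overline{Q_j}$.

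For each pair $(i,j)$ the image $B_{i,j}:=\varphi_i(\mathrm{Int}(Q_j))$ is a bounded subset of $\mathbb{R}^m$ piecewise given by $\mathcal{L}$-terms, so Corollary~2.3 of~\cite{Now2} decomposes it into a finite union of special cubes $S_l$ with associated $\mathcal{Q}$-diffeomorphisms $\sigma_l\colon (0,1)^{d_l}\to S_l$ extending $\mathcal{Q}$-analytically to a vicinity of $[0,1]^{d_l}$. Varying over $(i,j)$, the totality of the resulting $S_l$ equals $\bigcup_i \varphi_i((-1,1)^m)$, which is a neighbourhood of $K$; this gives condition~(1).

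The main obstacle is condition~(2), the $\mathcal{Q}$-analytic extension of each $f_k\circ\sigma_l$ to a vicinity of $[0,1]^{d_l}$. The key observation is that on $\mathrm{Int}(Q_j)$ the modification $\varphi_i$ is a $\mathcal{Q}$-analytic diffeomorphism onto $B_{i,j}$, since the exceptional loci of the constituent blow-ups and the axes of the power substitutions all sit inside coordinate subspaces, which do not meet $\mathrm{Int}(Q_j)$. This yields a factorization $\sigma_l=\varphi_i\circ\rho_l$ on $(0,1)^{d_l}$, and therefore $f_k\circ\sigma_l=(f_k\circ\varphi_i)\circ\rho_l$. Since $f_k\circ\varphi_i$ is $\mathcal{Q}$-analytic on a neighbourhood of $\overline{Q_j}$, the remaining task is to control the map $\rho_l$ at the boundary of $[0,1]^{d_l}$. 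The delicate point is that, while $\sigma_l$ itself has a $\mathcal{Q}$-analytic extension by construction, $\rho_l=\varphi_i^{-1}\circ\sigma_l$ need not; one has to exploit the explicit monomial structure of the blow-ups and power substitutions defining $\varphi_i$, combined with the special-cube structure of $S_l$ and the vanishing structure of $x^{\alpha}u(x)$, to recognize $(f_k\circ\varphi_i)\circ\rho_l$ as a $\mathcal{Q}$-function in a neighbourhood of $[0,1]^{d_l}$. This bookkeeping at the exceptional boundary is the technical heart of the argument.
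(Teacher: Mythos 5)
Your reduction of reciprocal normal crossings via boundedness is correct and worth making explicit (the paper leaves it implicit), and the opening move --- Theorem~\ref{rect-th} followed by decomposition into special cubes --- is the right starting point. But your proof of condition~2), which is the only nontrivial conclusion of the corollary, is not actually given: you concede that the extension of $(f_k\circ\varphi_i)\circ\rho_l$ across the boundary of $[0,1]^{d_l}$ is ``the technical heart'' and leave it as bookkeeping. Worse, the factorization $\sigma_l=\varphi_i\circ\rho_l$ rests on the claim that $\varphi_i$ restricts to a diffeomorphism of $\mathrm{Int}(Q_j)$ onto $B_{i,j}$ because the degeneracy locus ``sits inside coordinate subspaces.'' Theorem~\ref{rect-th} guarantees normal-crossing form only for the pullbacks $f_k\circ\varphi_i$, not for the Jacobian of $\varphi_i$ or for its non-injectivity locus; all one knows is that $\varphi_i$ is a term-invertible diffeomorphism off the preimage of some closed nowhere dense set $V_i$ given piecewise by $\mathcal{L}$-terms, and $\varphi_i^{-1}(V_i)$ can perfectly well meet $\mathrm{Int}(Q_j)$. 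So both halves of your argument for 2) are unsupported.

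The device that closes exactly this gap in the paper is an induction on the ambient dimension $m$. Off the degeneracy locus, $\varphi_i\colon(-1,1)^m\setminus\varphi_i^{-1}(V_i)\longrightarrow\mathbb{R}^m\setminus V_i$ is a diffeomorphism whose inverse is given piecewise by $\mathcal{L}$-terms; decomposing the source into special cubes and composing with $\varphi_i$ therefore yields special modifications on which $f_k\circ\varphi_i$ extends $\mathcal{Q}$-analytically, quadrant by quadrant, as a normal crossing (or as $1/u$, by your boundedness observation). What is left over is the restriction of the $f_k$ to the sets $V_i$, which have dimension $<m$, and this is disposed of by the induction hypothesis after a further special modification --- there is no need to invert $\varphi_i$ at the exceptional boundary at all. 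Without this (or some equivalent) mechanism for the lower-dimensional bad locus, your argument does not go through.
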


\begin{proof}
We proceed with induction with respect to the dimension $m$. The
assertion is trivial if $m=0$. Supposing the assertion to hold for
$d<m$, we shall prove it for $m$. Take the modifications
$$ \varphi_{i}: [-1,1]^{m} \longrightarrow \mathbb{R}^{m}, \ \ \ \
   i=1,\ldots,p, $$
achieved in Theorem~\ref{rect-th}. For each $i=1,\ldots,p$, there
exists a closed nowhere dense subset $V_{i} \subset
\mathbb{R}^{m}$ given piecewise by $\mathcal{L}$-terms such that
the restriction
$$ \varphi_{i}: (-1,1)^{m} \setminus \varphi_{i}^{-1}(V_{i})
   \longrightarrow \mathbb{R}^{m} \setminus V_{i} $$
is a diffeomorphism onto the image given piecewise by
$\mathcal{L}$-terms, and the inverse map to this diffeomorphism is
piecewise given by a finite number of $\mathcal{L}$-terms. By
decomposition into special cubes~\cite[Theorem~2.1]{Now2}), it is
enough to consider the restrictions of the functions
$f_{1},\ldots,f_{s}$, to the subsets $V_{i}$ of dimensions
$d_{i}<m$, $i=1,\ldots,p$. Again, application of special
modifications reduces the problem to the dimensions $d_{i} <m$,
which finishes the proof by the induction hypothesis.
\end{proof}

Now, let $T$ be the universal diagram of the structure
$\mathbb{R}_{\mathcal{Q}}$ in the language $\mathcal{L}$ of
restricted quasianalytic functions augmented by rational powers,
i.e.\ the set of all universal $\mathcal{L}$-sentences that are
true in $\mathbb{R}_{\mathcal{Q}}$). Fix a model $\mathcal{R}$ of
the universal theory $T$ in the language $\mathcal{L}$. Every
$\mathcal{L}$-substructure of $\mathcal{R}$ is a model of $T$. We
always regard the standard model $\mathbb{R}_{\mathcal{Q}}$ as a
substructure of $\mathcal{R}$. Since the decompositions into
special cubes are described by $\mathcal{L}$-terms (both a special
modification $\varphi$ and its inverse $\psi$), they are preserved
by passage to any model $\mathcal{R}$ of $T$:
$$ F = \bigcup_{j} S_{j} \ \ \Longrightarrow \ \
   F^{\mathcal{R}} = \bigcup_{j} S_{j}^{\mathcal{R}}. $$
For simplicity of notation, we shall usually omit the superscript
$^{\mathcal{R}}$ referring to the interpretations in a model
$\mathcal{R}$, which will not lead to confusion.

\vspace{1ex}

We say that infinitesimals
$\lambda=(\lambda_{1},\ldots,\lambda_{m}) \in \mathcal{R}$ are
analytically dependent, if $\lambda$ lie in a special cube
$S=\varphi((0,1)^{d})$ with $d<m$. We call infinitesimals
$\lambda$ analytically independent if they are not analytically
dependent. Analytical independence is preserved, of course, under
permutation of infinitesimals. We say that a subset $A$ in
$\mathcal{R}$ is analytically independent if every finite subset
$A$ in $\mathcal{R}$ consists of analytically independent
infinitesimals. If $A \subset B$ and the set $B$ is analytically
independent, so is $A$.

\vspace{1ex}

For a subset $A \subset \mathcal{R}$, let $\langle A \rangle$
denote the substructure of $\mathcal{R}$ generated by $A$. We
shall see in the next two sections that span operation $\langle A
\rangle$ satisfies the exchange property.

\vspace{1ex}

The convex hull of $\mathbb{R}$ in $\mathcal{R}$ is a valuation
ring $V$ of bounded (with respect to $\mathbb{R}$) elements in
$\mathcal{R}$; its maximal ideal $\mathfrak{m}$ consists of all
infinitesimals in $\mathcal{R}$. The valuation $v$ induced by $V$
is called the standard valuation on the field $\mathcal{R}$; its
value group $\Gamma_{\mathcal{R}}$ is a $\mathbb{Q}$-vector space.
In order to investigate the valuation $v$, we have established in
paper~\cite{Now2} several results about $\mathcal{Q}$-functions,
recalled in this and the next section.

\vspace{1ex}

Now we are going to formulate rectilinearization of terms in the
language of infinitesimals. Clearly, Corollary~\ref{rect-cor-1}
immediately yields the version stated below, which coincides with
that from~{\em op.~cit.\/}, Corollary~2.6, whose proof, however,
was provided too scantily.

\begin{corollary}\label{rect-cor-2}
Consider an $\mathcal{L}$-term $t(x)$ and positive analytically
independent infinitesimals $\lambda =
(\lambda_{1},\ldots,\lambda_{m})$. If $t(\lambda)$ is bounded,
then there exist a special modification
$$ \varphi: (0,1)^{m} \longrightarrow \mathbb{R}^{m} \ \ \mbox{ with } \ \
\lambda = \varphi (\lambda') \ \mbox{ for some } \ \lambda' \in
(0,1)^{m} $$  such that the superposition $f:= t \circ \varphi$
extends to a $\mathcal{Q}$-function in the vicinity of
$[0,1]^{m}$; in particular we have $t(\lambda) = f(\lambda')$.
\end{corollary}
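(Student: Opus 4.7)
The plan is to reduce the claim to Corollary~\ref{rect-cor-1} by first truncating $t$ into a bounded, piecewise $\mathcal{L}$-term function, then using analytic independence to pin down a full-dimensional special cube, and finally verifying that the truncation does not distort the extension at $\lambda$.

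Since $t(\lambda)$ is bounded in $\mathcal{R}$, pick a standard real $M>0$ with $|t(\lambda)|<M$ and set $B:=\{x\in\mathbb{R}^{m}:|t(x)|\le M\}$, an $\mathcal{L}$-definable subset of $\mathbb{R}^{m}$ whose interpretation in $\mathcal{R}$ contains $\lambda$. Define $g:\mathbb{R}^{m}\to\mathbb{R}$ by $g(x)=t(x)$ on $B$ and $g(x)=0$ elsewhere; then $g$ is bounded by $M$ and given piecewise by $\mathcal{L}$-terms. Apply Corollary~\ref{rect-cor-1} to $g$ with the compact set $K=\{0\}$; this yields special modifications $\sigma_{i}:(0,1)^{d_{i}}\to S_{i}$, $i=1,\ldots,p$, whose special cubes cover a standard neighbourhood $U$ of $0$, and such that each $g\circ\sigma_{i}$ extends to a $\mathcal{Q}$-function in a neighbourhood of $[0,1]^{d_{i}}$.

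Because $\lambda$ consists of positive infinitesimals, $\lambda\in U^{\mathcal{R}}$, so $\lambda=\sigma_{i}(\lambda')$ for some index $i$ and some $\lambda'\in(0,1)^{d_{i}}_{\mathcal{R}}$. Analytic independence of $\lambda$ rules out $d_{i}<m$, hence $d_{i}=m$. To pass from $g\circ\sigma_{i}$ to $t\circ\sigma_{i}$, observe that $g$ jumps by $\pm M$ across $\partial B=\{|t|=M\}$: in order for $g\circ\sigma_{i}$ to be smooth on $(0,1)^{m}$ the connected image $S_{i}$ must avoid $\partial B$, and then by connectedness $S_{i}$ lies entirely in $\mathrm{Int}(B)$ or entirely in $\mathbb{R}^{m}\setminus B$. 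Since $|t(\lambda)|<M$ gives $\lambda\in B^{\mathcal{R}}$, elementarity excludes the second alternative and forces $S_{i}\subset\mathrm{Int}(B)$. On this cube $g$ coincides with $t$, so $t\circ\sigma_{i}=g\circ\sigma_{i}$ extends to a $\mathcal{Q}$-function near $[0,1]^{m}$. Setting $\varphi:=\sigma_{i}$ and $f:=t\circ\varphi$ delivers the conclusion, the identity $t(\lambda)=f(\lambda')$ being tautological.

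The delicate point is this last step: ensuring that the specific special cube carrying $\lambda$ is entirely contained in the region $B$ where the truncation $g$ agrees with $t$, so that replacing $t$ by $g$ does not affect the extension. It rests on the structural output of Corollary~\ref{rect-cor-1}---producing a \emph{single} $\mathcal{Q}$-function extension of $g\circ\sigma_{i}$ on each special cube---which forces the decomposition to refine the pieces of the piecewise $\mathcal{L}$-term function $g$.
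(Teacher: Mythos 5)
Your overall route is the right one and is essentially what the paper leaves implicit (it derives Corollary~\ref{rect-cor-2} from Corollary~\ref{rect-cor-1} with the single word ``clearly''): truncate $t$ to a bounded piecewise-term function $g$, desingularize, use analytic independence to force the cube through $\lambda$ to be $m$-dimensional, and then check that the truncation is invisible on that cube. The first three steps are fine (including the passage of the covering $\bigcup S_{i}\supset U$ to $\mathcal{R}$, which the paper justifies by the fact that a special modification and its inverse are both given by $\mathcal{L}$-terms). The gap is in the last step. An $\mathcal{L}$-term is in general a discontinuous function (restricted functions and the reciprocal are extended by $0$), so $\{|t|=M\}$ need not be the topological boundary of $B=\{|t|\le M\}$, and, more importantly, the ``jump'' of $g$ across that boundary can be invisible: if $t$ vanishes identically on $S_{i}\cap B$ while $|t|>M$ on a nonempty $S_{i}\setminus B$, then $g\circ\sigma_{i}\equiv 0$ is a perfectly good $\mathcal{Q}$-function even though $S_{i}$ meets both $B$ and its complement --- and then $t\circ\sigma_{i}$ does \emph{not} extend. (Take $m=1$ and $t=(M+1)\cdot\widetilde{1}(2x-3)$, where $\widetilde{1}$ is the restricted constant $1$, so that $t=M+1$ on $[1,2]$ and $t=0$ elsewhere; then $g\equiv 0$, and nothing in the \emph{statement} of Corollary~\ref{rect-cor-1} prevents it from returning a special cube $S_{i}\ni\lambda$ overlapping $[1,2]$.) So the dichotomy ``$S_{i}\subset\mathrm{Int}(B)$ or $S_{i}\subset\mathbb{R}^{m}\setminus B$'' does not follow from smoothness of $g\circ\sigma_{i}$ alone.

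The fix is small: apply Corollary~\ref{rect-cor-1} simultaneously to $g$ and to the characteristic function $\chi_{B}$ of $B$, which is also bounded and given piecewise by $\mathcal{L}$-terms. Then $\chi_{B}\circ\sigma_{i}$ extends continuously to a neighbourhood of $[0,1]^{m}$ and is $\{0,1\}$-valued on the connected set $(0,1)^{m}$, hence constant there; this is what actually yields the dichotomy $S_{i}\subset B$ or $S_{i}\cap B=\emptyset$. The second alternative is then excluded not by ``elementarity'' ($\mathcal{R}$ is only a model of the universal theory $T$, not an elementary extension of $\mathbb{R}_{\mathcal{Q}}$) but by transfer of the universal sentence ``$\sigma_{i}((0,1)^{m})\cap B=\emptyset$'' from $\mathbb{R}_{\mathcal{Q}}$ to $\mathcal{R}$, which would contradict $\lambda=\sigma_{i}(\lambda')\in B^{\mathcal{R}}$. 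With $S_{i}\subset B$ one has $t=g$ on $S_{i}$, and your conclusion follows.
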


\section{Some results on active and non-active infinitesimals}

In paper~\cite{Now2}, we developed a theory of active and
non-active infinitesimals, crucial for our approach to the
geometry of quasianalytic structures. Here, we wish to present
several annotated results of this theory, which are ingredients of
an induction procedure investigated in the next section. We say
that an infinitesimal $\mu$ is non-active over infinitesimals
$\lambda=(\lambda_{1},\ldots,\lambda_{m})$ if for each
$\mathcal{L}$-term $t(x)$ we have
$$ v(\mu - t(\lambda)) \in \Gamma_{\langle
\lambda \rangle}. $$ Otherwise, the infinitesimal $\mu$ is called
active over $\lambda$. It is clear that if $\mu$ is non-active
over $\lambda$, so is the infinitesimal $\mu' = s(\lambda)\mu
+t(\lambda)$ that is the value at $(\lambda,\mu)$ of any
$y$-linear $\mathcal{L}$-term.

\vspace{1ex}

We first examined, by means of successive lowering order of a
given $\mathcal{Q}$-function ({\em op.~cit.\/}, Theorem~3.1), its
behaviour at non-active infinitesimals ({\em op.~cit.\/},
Propositions~3.2 and~3.3), and thence conclude that, given a
finite number of infinitesimals
$\lambda=(\lambda_{1},\ldots,\lambda_{m})$, the value group
$\Gamma_{\langle \lambda \rangle}$ is a vector space over
$\mathbb{Q}$ of dimension $\leq m$ ({\em op.~cit.\/},
Corollary~3.4). Also examined were the behaviour of
$\mathcal{L}$-terms ({\em op.~cit.\/}, Proposition~3.5) and the
exchange property ({\em op.~cit.\/}, Proposition~3.6) at such
infinitesimals. However, it was yet more difficult to describe the
behaviour of $\mathcal{L}$-terms at active infinitesimals. To this
end, we introduced the concept of a regular sequence of
infinitesimals: a sequence $\lambda
=(\lambda_{1},\ldots,\lambda_{m})$ of infinitesimals shall be
called regular with main part $\lambda_{1},\ldots,\lambda_{k}$, if
the valuations
$$ v(\lambda_{1}),\ldots,v(\lambda_{k}) \in \Gamma_{\langle \lambda
\rangle} $$ form a basis over $\mathbb{Q}$ of the valuation group
$\Gamma_{\langle \lambda_{1},\ldots,\lambda_{m} \rangle}$.

\vspace{1ex}

Now we list five results from our paper~\cite{Now2}, including the
theorem on an active infinitesimal. Yet it has not been proven in
full generality, nevertheless, covering the classical analytic
case. Their formulations below take into account the number of
infinitesimals under study, because these results will be
encompassed by one induction procedure.

\vspace{2ex}


$\mathbf{(I_{m})}$ \  {\em (op.~cit., Theorem~4.4 on an Active
Infinitesimal)}

\vspace{1ex}

\begin{em}
For any $n \leq m$, consider a regular sequence
$\mu,\lambda_{1},\ldots,\lambda_{n}$ of infinitesimals with main
part $\mu,\lambda_{1},\ldots,\lambda_{k}$ and an
$\mathcal{L}$-term $t(y,x)$, $x=(x_{1}\ldots,x_{n})$, such that
$$ \nu :=t(\mu,\lambda) \not \in \langle \lambda \rangle $$
is an infinitesimal. If $v(\mu) \not \in \Gamma_{\langle \lambda
\rangle}$, then $\nu$ is active over the infinitesimals $\lambda$.
\end{em}

\vspace{2ex}

$\mathbf{(II_{m})}$ \ {\em (op.~cit., Proposition~4.7)}

\vspace{1ex}

\begin{em}
For any $n \leq m$, consider a regular sequence
$\lambda_{1},\ldots,\lambda_{n}$ of infinitesimals with main part
$\lambda_{1},\ldots,\lambda_{k}$ and an infinitesimal $\mu$ with
$v(\mu) \not \in \Gamma_{\langle \lambda \rangle}$. Then $\dim
\Gamma_{\langle \mu, \lambda \rangle} = k+1$ whence
$\mu,\lambda_{1},\ldots,\lambda_{n}$ is a regular sequence of
infinitesimals with main part
$\mu,\lambda_{1},\ldots,\lambda_{k}$.
\end{em}

\vspace{2ex}

$\mathbf{(III_{m})}$ \ {\em (op.~cit., Corollary~4.8; Valuation
Property for $\mathcal{L}$-terms)}

\vspace{1ex}

\begin{em}
\noindent For any $n \leq m$ and infinitesimals
$\mu,\lambda_{1},\ldots,\lambda_{n}$, we have the following
dichotomy:

$\bullet$ either $\mu$ is non-active over $\lambda$, and then
$\Gamma_{\langle \lambda,\mu \rangle} = \Gamma_{\langle \lambda
\rangle}$;

$\bullet$  or $\mu$ is active over $\lambda$, and then $\dim
\Gamma_{\langle \lambda,\mu \rangle} = \dim \Gamma_{\langle
\lambda \rangle} +1$.

\noindent In the latter case, one can find an $\mathcal{L}$-term
$t(x)$ such that
\end{em}
$$ v(\mu - t(\lambda)) \not \in \Gamma_{\langle \lambda
\rangle} \ \ \mbox{ and } \ \ \Gamma_{\langle \lambda, \mu
\rangle} = \Gamma_{\langle \lambda \rangle} \oplus \mathbb{Q}
\cdot v(\mu - t(\lambda)). $$

\vspace{1ex}

$\mathbf{(IV_{m})}$ \ {\em (op.~cit., Corollary~4.9; Exchange
Property for $\mathcal{L}$-terms)}

\vspace{1ex}

\begin{em}
\noindent For any $n \leq m$ and infinitesimals
$\mu,\lambda_{1},\ldots,\lambda_{n}$, if $\nu \in \langle \lambda,
\mu \rangle$ and $\nu \not \in \langle \lambda \rangle$, then $\mu
\in \langle \lambda, \nu \rangle$.
\end{em}

\vspace{2ex}

$\mathbf{(V_{m})}$ \ {\em (op.~cit., Proposition~5.1 and
Corollary~5.2; Behaviour of Analytically Independent
Infinitesimals)}

\vspace{1ex}

\begin{em}
For any $n \leq m$ and any two sets of analytically independent
infinitesimals $\lambda_{1},\ldots,\lambda_{n}$ and
$\lambda_{1}',\ldots,\lambda_{n}'$, if $\langle \lambda \rangle
\subset \langle \lambda' \rangle$, then $\langle \lambda \rangle =
\langle \lambda' \rangle$. Consequently, the infinitesimals
$(\lambda_{1},\ldots,\lambda_{n},\mu)$ are analytically
independent iff $\mu \not \in \langle \lambda \rangle$.
\end{em}

\vspace{2ex}

The proofs of the above results provided in paper~\cite{Now2}
indicate the following inferences:
$$ \mathbf{\mathbf{(I_{m})}  \Longrightarrow \mathbf{(II_{m+1})}, \ \
   (II_{m})}  \Longrightarrow \mathbf{(III_{m})} $$
and
$$ \mathbf{(I_{m})} \wedge
   \mathbf{(III_{m})} \Longrightarrow \mathbf{(IV_{m})}
   \Longrightarrow \mathbf{(V_{m+1})}. $$
Therefore the induction hypothesis $\mathbf{(I_{m-1})}$ implies
the hypotheses $\mathbf{(II_{m})}$, $\mathbf{(III_{m})}$,
$\mathbf{(IV_{m-1})}$ and $\mathbf{(V_{m})}$. Actually, in the
proof we shall make use of the assertions $\mathbf{(I_{m-1})}$,
$\mathbf{(III_{m})}$ and $\mathbf{(V_{m})}$.

\begin{remark}\label{rem-span}
The exchange property for $\mathcal{L}$-terms means exactly that
the structure $\mathcal{R}$ with span operation is geometric, and
that we have at our disposal the concepts of rank and basis for
its substructures.
\end{remark}

\section{Proof of the theorem on an active infinitesimal}

We proceed with induction with respect to the number $m$ of
infinitesimals $\lambda_{1},\ldots,\lambda_{m}$. When $m=0$, then
$\langle \lambda \rangle =\langle \emptyset \rangle= \mathbb{R}$
and the conclusion is evident. So take $m>0$ and assume the
theorem holds when the number of infinitesimals $\lambda$ is
smaller than $m$.

\vspace{1ex}

In~\cite{Now2}, we have reduced the problem to the case where
$$  t(u,v,\tilde{x}) = f(u,v/u,\tilde{x}), \ \ \
    \nu :=t(\mu,\lambda) = f(\mu,\lambda_{1}/\mu,\tilde{\lambda}),
$$
where $f(u,v,\tilde{x})$ is a function $\mathcal{Q}$-analytic at
$0 \in \mathbb{R}^{m+1}$; here $v = x_{1}$,
$\tilde{x}=(x_{2},\ldots,x_{m})$ and $\tilde{\lambda}=
(\lambda_{2}\ldots,\lambda_{m})$.

\vspace{1ex}

The valuation group $\Gamma_{\langle \lambda,\mu \rangle}$ is a
vector space over $\mathbb{Q}$ of dimension $\leq (m+1)$ ({\em
op.cit.}, Corollary~3.4). It is a direct sum of finitely many
archimedean subgroups
$$ \Gamma_{\langle \lambda,\mu \rangle} = G_{1} \oplus \ldots \oplus
   G_{r} \ \ \mbox{ with } \ \  G_{1}^{+} > \ldots > G_{r}^{+}, $$
where $G_{i}^{+}$ stands for the semigroup of all positive
elements of $G_{i}$. It is well-known that every archimedean
ordered abelian group is isomorphic to a subgroup of the ordered
additive group $\mathbb{R}$ of real numbers.

\vspace{1ex}

Let $\varepsilon=(\varepsilon_{1},\ldots,\varepsilon_{p})$ denote
those infinitesimals from among $\lambda$ for which
$$ v(\varepsilon_{1}),\dots,v(\varepsilon_{p}) > G_{2} \oplus \ldots \oplus G_{r} $$
and $\delta=(\delta_{1},\ldots,\delta_{q})$ the remaining
$\lambda$'s; obviously, $p+q=m$. The valued field $\langle
\lambda,\mu \rangle$ can be completed with respect to the standard
valuation $v$, and the completion has the same valuation group.
Notice that the topology induced by $v$ is metrizable with the
basis of zero neighbourhoods consisting of sets of the form
$$ \{ t(\mu,\delta,\varepsilon): \; v(t(\mu,\delta,\varepsilon)) >
   \gamma \}, \ \ \ \gamma \in G_{1}. $$
In the completion, one can deal with formal power series in the
infinitesimals $\varepsilon$ with $\mathcal{Q}$-analytic
coefficients taken on the infinitesimals $\delta$.

\vspace{1ex}

We encounter two cases:

\vspace{1ex}

\begin{em}
Case A, where $\lambda_{1}$ is one of the $\varepsilon$'s, say
$\lambda_{1}=\varepsilon_{1}$;

\vspace{1ex}

\noindent {\em or}

\vspace{1ex}

Case B, where $\lambda_{1}$ is one of the $\delta$'s, say
$\lambda_{1}=\delta_{1}$.
\end{em}

\vspace{2ex}

{\bf CASE A.} Consider the Taylor coefficients
$$ \frac{1}{i!j!} \cdot \frac{\partial^{i+j}f}{\partial u^{i} \partial v^{j}}
   (0,0,\tilde{x}) =: a_{ij}(\tilde{x}), \ \ \ i,j \in \mathbb{N}, $$
which are $\mathcal{Q}$-analytic functions at zero. A crucial role
is played by the following


\begin{lemma}\label{proof-lem-1} Under the assumptions of the
theorem on an active infinitesimal \ $\mathbf{(I_{m})}$, we must
have
$$ \sum_{i=0}^{\infty} \, a_{i,i+s}(\tilde{\lambda}) \cdot
   \varepsilon_{1}^{i} \neq 0 \ \ \mbox{ or } \ \
   \sum_{j=0}^{\infty} \, a_{j+s,j}(\tilde{\lambda}) \cdot
   \varepsilon_{1}^{j} \neq 0 $$
for some $s \in \mathbb{N} \setminus \{ 0 \}$.
\end{lemma}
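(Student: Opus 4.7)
I plan to argue by contradiction. Suppose that for every $s\ge 1$ both
\[
B_s := \sum_{j\ge 0} a_{j+s,j}(\tilde\lambda)\,\varepsilon_1^j = 0 \qquad\text{and}\qquad
C_s := \sum_{i\ge 0} a_{i,i+s}(\tilde\lambda)\,\varepsilon_1^i = 0.
\]
In Case~A both $\mu$ and $w:=\lambda_1/\mu=\varepsilon_1/\mu$ are positive infinitesimals (by the reduction performed in~\cite{Now2}, which arranges $v(\lambda_1)>v(\mu)$), so expanding the $\mathcal{Q}$-analytic germ $f$ as its convergent Taylor series and substituting $u=\mu$, $v=w$ in the $v$-adic completion of $\langle\lambda,\mu\rangle$ yields
\[
\nu \;=\; \sum_{i,j\ge 0} a_{ij}(\tilde\lambda)\,\mu^{i} w^{j}
\;=\; B_0 + \sum_{s\ge 1}\mu^{s} B_s + \sum_{s\ge 1} w^{s} C_s,
\]
which under our hypothesis collapses to $\nu = B_0 = \sum_{j\ge 0} a_{jj}(\tilde\lambda)\,\varepsilon_1^j$.

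Next, I exploit the archimedean decomposition $\Gamma_{\langle\lambda,\mu\rangle}=G_1\oplus\cdots\oplus G_r$ together with the strict inequality $v(\varepsilon_1)>G_2\oplus\cdots\oplus G_r$. In each relation $B_s=0$ and $C_s=0$, successive powers of $\varepsilon_1$ carry strictly increasing $G_1$-component, so an inductive leading-term extraction, refined when needed by the finer archimedean hierarchy among the $\varepsilon$-infinitesimals possibly sitting inside $\tilde\lambda$, forces $a_{j+s,j}(\tilde\lambda)=0$ for every $j\ge 0$, $s\ge 1$, and symmetrically $a_{i,i+s}(\tilde\lambda)=0$. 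By the inductive hypothesis $\mathbf{(V_m)}$ of Section~3, the tuple $\tilde\lambda$ is analytically independent in $\mathcal{R}$, so these numerical vanishings upgrade to $a_{ij}\equiv 0$ as $\mathcal{Q}$-analytic germs at the origin for every $i\ne j$.

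Quasianalyticity of $f$ then forces $f(u,v,\tilde x) = \sum_{j\ge 0} a_{jj}(\tilde x)(uv)^j$ near $0$, and the restricted $\mathcal{Q}$-function $g_0(w,\tilde x) := f(u_0,w/u_0,\tilde x)$, for any fixed $u_0\in(0,1)$ inside the $\mathcal{Q}$-analytic domain of $f$, therefore satisfies $g_0(uv,\tilde x)=f(u,v,\tilde x)$ identically. Consequently,
\[
\nu \;=\; f(\mu,\lambda_1/\mu,\tilde\lambda) \;=\; g_0(\lambda_1,\tilde\lambda) \;\in\; \langle\lambda\rangle,
\]
directly contradicting the standing hypothesis $\nu\notin\langle\lambda\rangle$ of $\mathbf{(I_m)}$.

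The main obstacle I anticipate is the archimedean leading-term extraction in the second step: when $\tilde\lambda$ itself contains further $\varepsilon$-infinitesimals, their valuations live inside $G_1^+$ alongside $v(\varepsilon_1)$, so the cleanest separation must use both the $G_1$-grading and the finer archimedean order on $G_1$, combined with the regularity of the sequence $\mu,\lambda_1,\ldots,\lambda_n$. The temptation to simply identify $B_0$ with a single $\mathcal{Q}$-function value must be resisted, since the Hadamard diagonal $\sum_j a_{jj}(\tilde x)\,y^j$ need not belong to $\mathcal{Q}_M$ for general Denjoy--Carleman classes; the argument instead upgrades the vanishing of the numerical sums into the vanishing of entire off-diagonal $\mathcal{Q}$-analytic germs $a_{ij}$, which is precisely what trivialises $f$ into a function of $uv$ and $\tilde x$ alone and thereby produces the contradiction.
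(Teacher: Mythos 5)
Your overall strategy (contradiction, collapse of the double series to the diagonal, conclude that $f(u,x_{1}/u,\tilde{x})$ does not really depend on $u$, hence $\nu\in\langle\lambda\rangle$) matches the paper's in spirit, but the route you take to the collapse has a genuine gap at its central step. From $B_{s}=\sum_{j}a_{j+s,j}(\tilde{\lambda})\varepsilon_{1}^{j}=0$ you want to extract $a_{j+s,j}(\tilde{\lambda})=0$ for every $j$ by a ``leading-term'' argument, on the grounds that successive powers of $\varepsilon_{1}$ have strictly increasing $G_{1}$-components. But the valuations of the coefficients $a_{j+s,j}(\tilde{\lambda})$ live in $\Gamma_{\langle\tilde{\lambda}\rangle}$, which in general contains $v(\varepsilon_{1})$ and elements differing from one another by integer multiples of it (the remaining $\varepsilon$'s sit in the same archimedean class $G_{1}$, and $(I_{m})$ assumes only that $\mu,\lambda$ is a \emph{regular} sequence, not that the $\lambda$'s are analytically independent). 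So distinct terms $a_{j_{1}+s,j_{1}}(\tilde{\lambda})\varepsilon_{1}^{j_{1}}$ and $a_{j_{2}+s,j_{2}}(\tilde{\lambda})\varepsilon_{1}^{j_{2}}$ can carry the same value and cancel; e.g.\ with $\lambda_{2}=\varepsilon_{1}+\varepsilon_{1}^{k}$ one easily writes a vanishing $B_{s}$ with nonzero coefficients. You flag this as ``the main obstacle'' but the appeal to ``the finer archimedean hierarchy'' does not resolve it. The subsequent upgrade from $a_{ij}(\tilde{\lambda})=0$ to $a_{ij}\equiv 0$ also leans on an unavailable hypothesis: $(V_{m})$ does not assert that $\tilde{\lambda}$ is analytically independent. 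Finally, your opening identity $\nu=B_{0}+\sum\mu^{s}B_{s}+\sum w^{s}C_{s}$ with the \emph{original} $\mu$ is itself unjustified: one only knows $v(\mu)\notin\Gamma_{\langle\lambda\rangle}$, and $v(\varepsilon_{1}/\mu)$ need not dominate $G_{2}\oplus\cdots\oplus G_{r}$, so the rearrangement of the double series in the completion is not legitimate (this is precisely why, after the lemma, the paper splits $\nu$ into $\nu_{+}+\nu_{0}+\nu_{-}$ the way it does).

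The paper circumvents all of this by never isolating individual coefficients. It uses model-theoretic compactness to pass to a model containing a \emph{new} infinitesimal $\mu^{*}$ with $\mu^{*}\notin\langle\lambda\rangle$ (so $(\mu^{*},\lambda)$ is analytically independent by $(V_{m})$) and $\frac{1}{2}\sqrt{\varepsilon_{1}}<\mu^{*}<2\sqrt{\varepsilon_{1}}$, which guarantees that both $v(\mu^{*})$ and $v(\varepsilon_{1}/\mu^{*})$ dominate $G_{2}\oplus\cdots\oplus G_{r}$. For this generic $\mu^{*}$ the double series legitimately collapses to $\sum_{i}a_{ii}(\tilde{\lambda})\varepsilon_{1}^{i}$, giving $\partial t/\partial u(\mu^{*},\lambda)=0$ at an analytically independent tuple; the quasianalytic identity principle then forces $\partial t/\partial u\equiv 0$ on the whole sector $\Omega$, so $t(u,x)=g(x)$ and $\nu=g(\lambda)\in\langle\lambda\rangle$, the desired contradiction. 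If you want to repair your argument, replacing the coefficient-extraction step by this genericity-plus-identity-principle device is the missing idea.
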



\begin{proof}
Suppose Lemma~\ref{proof-lem-1} were false. Then
$$ \sum_{i=0}^{\infty} \, a_{i,i+s}(\tilde{\lambda}) \cdot
   \varepsilon_{1}^{i} = 0 \ \ \mbox{ and } \ \
   \sum_{j=0}^{\infty} \, a_{j+s,j}(\tilde{\lambda}) \cdot
   \varepsilon_{1}^{j} = 0 $$
for all $s \in \mathbb{N} \setminus \{ 0 \}$.

\vspace{1ex}

But we can find a model of the universal diagram $T$ with the
infinitesimals $\lambda$ and an infinitesimal $\mu^{*}$ such that
$\lambda,\mu^{*}$ are analytically independent and
$$ v(\mu^{*}),v(\varepsilon_{1}/\mu^{*}) > G_{2} \oplus \ldots \oplus
   G_{r}. $$
Indeed, it follows from the induction hypothesis that the
assertion $\mathbf{(V_{m})}$ holds. The infinitesimals $\lambda$
and $\mu^{*}$ are therefore analytically independent iff $\mu^{*}
\not \in \langle \lambda \rangle$. Consequently, it suffices to
find a model of the universal diagram $T$ along with the diagram
of the structure $\langle \lambda \rangle$ and the sentences of
the form
$$ c \neq t(\lambda) \ \ \ \mbox{ where } \ \ t(x) \ \mbox{ are } \
   \mathcal{L}\mbox{-terms} $$
and the sentence
$$ \frac{1}{2} \, \sqrt{\varepsilon_{1}} < c < 2 \, \sqrt{\varepsilon_{1}} \, ; $$
here $c$ denotes a new constant construed as $\mu^{*}$. Its
existence can be immediately deduced through model-theoretic
compactness.

\vspace{1ex}

Under the conditions stated above, we have
$$ t(\mu^{*},\lambda) = f(\mu^{*},\lambda_{1}/\mu^{*},\tilde{\lambda}) =
   \sum_{i=0}^{\infty} \, a_{i,i}(\tilde{\lambda}) \cdot
   \varepsilon_{1}^{i} + $$
$$ + \sum_{s=1}^{\infty} (\varepsilon_{1}/\mu^{*})^{s}
   \cdot \sum_{i=0}^{\infty} \, a_{i,i+s}(\tilde{\lambda})
   \cdot \varepsilon_{1}^{i} +
   \sum_{s=1}^{\infty} (\mu^{*})^{s} \cdot
   \sum_{j=0}^{\infty} \, a_{j+s,j}(\tilde{\lambda}) \cdot
   \varepsilon_{1}^{i} = $$
$$ = \sum_{i=0}^{\infty} \, a_{i,i}(\tilde{\lambda})
   \cdot \varepsilon_{1}^{i}, $$
and thus we get
$$ \partial t/\partial u \, (\mu^{*},\lambda) = 0. $$
Here
$$ \partial t/\partial u \, (u,x) = \partial /\partial u \, f(u,x_{1}/u,\tilde{x}) $$
is a $\mathcal{Q}$-function on an open set
$$ \Omega = S \times \{ \tilde{x} \in \mathbb{R}^{m-1}: \, |x_{2}|,
   \ldots, |x_{m}| < \rho \}, $$
where $\rho \in \mathbb{R}$, $\rho >0$ is small enough, and
$$ S = \{ (u,x_{1}) \in \mathbb{R}^{2}: \, |u| < \rho, \,
   |x_{1}|<\rho|u| \} $$
is a sector. Since the infinitesimals $\mu^{*},\lambda$ are
analytically independent, the above $\mathcal{Q}$-function must
vanish on an open special cube contained in $\Omega$. By the
identity principle for quasianalytic functions, it vanishes
identically on $\Omega$, and thus
$$ t(u,x) = f(u,x_{1}/u,\tilde{x}) = g(x), $$
where $r \in \mathbb{R}$, $r \in (0,\rho)$, and $ g(x) :=
f(r,x_{1}/r,\tilde{x})$ is a $\mathcal{Q}$-function at $0 \in
\mathbb{R}^{m}_{x}$. Hence
$$ \nu :=t(\mu,\lambda) = g(\lambda) \in \langle \lambda \rangle , $$
and this contradiction completes the proof of
Lemma~\ref{proof-lem-1}.
\end{proof}


Since $\mu \cdot \varepsilon_{1}/\mu = \varepsilon_{1}$, we see
that
$$ v(\mu) \geq 1/2 \: v(\varepsilon_{1}) \ \ \mbox{ or } \ \
   v(\varepsilon_{1}/\mu) \geq 1/2 \: v(\varepsilon_{1}). $$
By symmetry, we may assume that the former condition holds. Then
the series
$$ \nu_{0} := \sum_{i=j=0}^{\infty} \ a_{ij}(\tilde{\lambda}) \cdot
   \mu^{i} \cdot \left( \frac{\varepsilon_{1}}{\mu} \right)^{j} =
   \sum_{i=0}^{\infty} \ a_{ii}(\tilde{\lambda}) \cdot
   \varepsilon_{1}^{i},  $$
$$ \nu_{+} := \sum_{i=0}^{\infty} \sum_{j<i} \ a_{ij}(\tilde{\lambda}) \cdot
   \mu^{i} \cdot \left( \frac{\varepsilon_{1}}{\mu} \right)^{j} =
   \sum_{i=0}^{\infty} \sum_{j<i} \ a_{ij}(\tilde{\lambda})\cdot \mu^{i-j}
   \cdot \varepsilon_{1}^{j} $$
and
$$ \nu_{-} := \sum_{i=0}^{\infty} \left( \frac{1}{i!} \cdot
   \frac{\partial^{i}f}{\partial u^{i}}(0,\frac{\varepsilon_{1}}{\mu},\tilde{\lambda})
   \cdot \mu^{i}
   - \sum_{j\leq i} \ a_{ij}(\tilde{\lambda}) \cdot
   \mu^{i} \cdot \left( \frac{\varepsilon_{1}}{\mu} \right)^{j} \right)
   = $$
 $$  = \sum_{i=0}^{\infty} \left( \frac{1}{i!} \cdot
     \frac{\partial^{i}f}{\partial
     u^{i}}(0,\frac{\varepsilon_{1}}{\mu},\tilde{\lambda})
     - \sum_{j\leq i} \ a_{ij}(\tilde{\lambda}) \cdot
     \left( \frac{\varepsilon_{1}}{\mu} \right)^{j} \right)
     \cdot \mu^{i} $$
are well defined elements of the completion of the valued field
$\langle \lambda,\mu \rangle$ with respect to the standard
valuation $v$. We have $\nu = \nu_{+} + \nu_{0} + \nu_{-}$.
Observe that the $\mathcal{Q}$-analytic function which occurs in
the $i$-th summand of the last series is of the form
$$ \left( \frac{1}{i!} \cdot
     \frac{\partial^{i}f}{\partial
     u^{i}}(0,v,\tilde{x})
     - \sum_{j\leq i} \ a_{ij}(\tilde{x}) \cdot
     v^{j} \right) \cdot u^{i} = g_{i}(v,\tilde{x}) \cdot v^{i+1} \cdot u^{i} $$
 for some function $g_{i}$ $\mathcal{Q}$-analytic at zero.


\begin{lemma}\label{proof-lem-2}
If
$$ \sum_{i=0}^{\infty} \, a_{i,i+s}(\tilde{\lambda}) \cdot
   \varepsilon_{1}^{i} \neq 0 \ \ \mbox{ or } \ \
   \sum_{j=0}^{\infty} \, a_{j+s,j}(\tilde{\lambda}) \cdot
   \varepsilon_{1}^{j} \neq 0 $$
for some $s \in \mathbb{N} \setminus \{ 0 \}$, then we have,
respectively
$$ \nu_{-} \neq 0 \ \ \mbox{and } \ \ v(\nu_{-} ) \in
   \Gamma_{\langle \lambda \rangle} - (\mathbb{N} \setminus \{ 0 \}) \cdot
   v(\mu) $$
or
$$ \nu_{+} \neq 0 \ \ \mbox{and } \ \ v(\nu_{+} ) \in
   \Gamma_{\langle \lambda \rangle} + (\mathbb{N} \setminus \{ 0 \}) \cdot
   v(\mu). $$
\end{lemma}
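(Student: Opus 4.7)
The plan is to expand both series in powers of $\mu$ and exploit $v(\mu)\notin\Gamma_{\langle\lambda\rangle}$ to see that the regrouped terms have pairwise distinct valuations, so that the minimum-valuation term survives under the ultrametric inequality.

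Reindexing by $s=j-i$ in the defining expression of $\nu_{-}$, and by $s=i-j$ in that of $\nu_{+}$, gives
$$ \nu_{-}=\sum_{s=1}^{\infty}\mu^{-s}\,g_{s},\quad g_{s}=\varepsilon_{1}^{s}\sum_{i\ge 0}a_{i,i+s}(\tilde\lambda)\varepsilon_{1}^{i}\in\langle\lambda\rangle, $$
$$ \nu_{+}=\sum_{s=1}^{\infty}\mu^{s}\,g'_{s},\quad g'_{s}=\sum_{j\ge 0}a_{j+s,j}(\tilde\lambda)\varepsilon_{1}^{j}\in\langle\lambda\rangle, $$
and the hypothesis of the lemma asserts precisely that some $g_{s}$, respectively some $g'_{s}$, is nonzero. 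In the subcase $v(\mu)\ge\tfrac12 v(\varepsilon_{1})$ of Case~A fixed above, both $v(\mu)$ and $v(\varepsilon_{1}/\mu)$ have strictly positive $G_{1}$-component (using that $v(\varepsilon_{1})/2>G_{2}\oplus\cdots\oplus G_{r}$ and that $\varepsilon_{1}/\mu$ remains infinitesimal); hence the valuations of the individual summands in each regrouped series grow unboundedly in $G_{1}$, making the series converge in the $v$-adic completion of $\langle\lambda,\mu\rangle$.

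The crux is that if $s_{1}\ne s_{2}$ with $g_{s_{1}},g_{s_{2}}\ne 0$, one cannot have $v(\mu^{-s_{1}}g_{s_{1}})=v(\mu^{-s_{2}}g_{s_{2}})$, for this would give
$$ (s_{2}-s_{1})v(\mu)=v(g_{s_{1}})-v(g_{s_{2}})\in\Gamma_{\langle\lambda\rangle}, $$
contradicting $v(\mu)\notin\Gamma_{\langle\lambda\rangle}$. Thus the valuations of the nonzero summands of $\sum_{s}\mu^{-s}g_{s}$ are pairwise distinct and, by convergence, admit a unique minimum, attained at some $s_{*}\ge 1$. The strict form of the ultrametric inequality then yields
$$ v(\nu_{-})=v(g_{s_{*}})-s_{*}v(\mu)\in\Gamma_{\langle\lambda\rangle}-(\mathbb{N}\setminus\{0\})v(\mu), $$
and in particular $\nu_{-}\ne 0$. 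The argument for $\nu_{+}$ is strictly parallel, yielding $v(\nu_{+})\in\Gamma_{\langle\lambda\rangle}+(\mathbb{N}\setminus\{0\})v(\mu)$.

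The only delicate point I anticipate is the convergence/rearrangement step: one must check that the formal passage from the double-indexed expansions of $\nu_{\pm}$ to the single-indexed series $\sum_{s}\mu^{\pm s}(\cdot)$ is legitimate in the $v$-adic completion, so that the valuation of $\nu_{\pm}$ is genuinely controlled by the minima of the regrouped summands. This is exactly what the $G_{1}^{+}$-positivity of both $v(\mu)$ and $v(\varepsilon_{1}/\mu)$ provides in the subcase fixed above; once this is in place, the rest of the argument is purely formal.
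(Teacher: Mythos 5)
Your treatment of $\nu_{+}$ is correct and is exactly the paper's argument: grouping by $l=i-j$ is an unconditional rearrangement because each term carries a factor $\mu^{l}\varepsilon_{1}^{j}$ and both $v(\mu)$ and $v(\varepsilon_{1})$ have strictly positive $G_{1}$-component in the subcase $v(\mu)\ge\tfrac12 v(\varepsilon_{1})$; the column values then lie in $\Gamma_{\langle\lambda\rangle}+l\,v(\mu)$, are pairwise distinct since $v(\mu)\notin\Gamma_{\langle\lambda\rangle}$ and $\Gamma_{\langle\lambda\rangle}$ is a $\mathbb{Q}$-vector space, and the minimum is attained because the $s$-th column is nonzero.

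The $\nu_{-}$ half has a genuine gap, and it sits exactly at the point you flagged as ``delicate''. From $v(\mu)\ge\tfrac12 v(\varepsilon_{1})>G_{2}\oplus\cdots\oplus G_{r}$ you may conclude that $v(\mu)$ has positive $G_{1}$-component, but \emph{not} that $v(\varepsilon_{1}/\mu)=v(\varepsilon_{1})-v(\mu)$ does: nothing prevents $v(\mu)$ and $v(\varepsilon_{1})$ from having the same $G_{1}$-component, so that $v(\varepsilon_{1}/\mu)$ is a positive element of $G_{2}\oplus\cdots\oplus G_{r}$. In that situation the column sums $\mu^{-s}g_{s}=(\varepsilon_{1}/\mu)^{s}\sum_{i}a_{i,i+s}(\tilde\lambda)\varepsilon_{1}^{i}$ have valuations $s\,v(\varepsilon_{1}/\mu)+v(g_{s})$ that are bounded above by elements of $G_{1}^{+}$ (e.g.\ when $a_{0,s}(0)\neq0$ for infinitely many $s$, so $v(g_{s})=s\,v(\varepsilon_1)$ contributes nothing new in $G_1$ beyond $s\,v(\varepsilon_1/\mu)$ being non-cofinal), hence are not cofinal in the value group; the series $\sum_{s}\mu^{-s}g_{s}$ then fails the Cauchy criterion and does not converge, even though $\nu_{-}$ itself is well defined by its row-indexed expansion (the $i$-th row carries $\mu^{i}$ and so has valuation $\ge i\,v(\mu)$). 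So the column rearrangement is illegitimate and the ``minimum of term valuations'' step has nothing to apply to. The paper circumvents this by peeling off only the \emph{finitely many} columns $l=1,\dots,s$, writing the remainder row-by-row using the factorization $\frac{1}{i!}\partial_{u}^{i}f(0,v,\tilde x)-\sum_{j\le i+s}a_{ij}(\tilde x)v^{j}=h_{i}(v,\tilde x)\,v^{i+s+1}$, splitting that remainder into a finite part $(\varepsilon_{1}/\mu)^{s+1}\sum_{i<n}h_{i}(\varepsilon_{1}/\mu,\tilde\lambda)\varepsilon_{1}^{i}$ --- whose valuation is placed in $\Gamma_{\langle\lambda\rangle}+(\mathbb{N}+s+1)\,v(\varepsilon_{1}/\mu)\subset\Gamma_{\langle\lambda\rangle}-(\mathbb{N}\setminus\{0\})\,v(\mu)$ via \cite[Corollary~2.11]{Now2} --- and a tail of valuation $\ge n\,v(\mu)$ exceeding that of the nonzero $s$-th column; one then compares only these $s+1$ candidate values, which are pairwise distinct by your same argument modulo $v(\mu)\notin\Gamma_{\langle\lambda\rangle}$. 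You need some version of this truncation (and the auxiliary valuation estimate for $h_{i}(\varepsilon_{1}/\mu,\tilde\lambda)$) to close the gap.
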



\begin{proof}
Consider first the latter case. Clearly,
$$ \nu_{+} =
   \sum_{l=1}^{\infty} \left( \sum_{j=0}^{\infty} \ a_{j+l,j}(\tilde{\lambda})
   \cdot \varepsilon_{1}^{j} \right) \cdot \mu^{l}, $$
and the values of (the valuation $v$ taken on) the $l$-th summands
of the above series are pairwise distinct, unless they are
infinity. Consequently, $v(\nu_{+})$ is the minimum of the values
of those summands, because some summands are non-vanishing (for
instance, the $s$-th one). Hence
$$ v(\nu_{+}) < \infty \ \ \mbox{and } \ \ v(\nu_{+} ) \in
   \Gamma_{\langle \lambda \rangle} + (\mathbb{N} \setminus \{ 0 \}) \cdot
   v(\mu), $$
as desired.

\vspace{1ex}

In the former case, take $n$ large enough so that
$$ v(\mu^{n}) >  v \left( \sum_{i=0}^{\infty} \ a_{i,i+s}(\tilde{\lambda}) \cdot
   \mu^{i} \cdot \left( \frac{\varepsilon_{1}}{\mu} \right)^{i+s}
   \right), $$
and write down $\nu_{-}$ as follows:
$$  \nu_{-} = \ \sum_{l=1}^{s} \left( \sum_{i=0}^{\infty} \ a_{i,i+l}(\tilde{\lambda}) \cdot
   \mu^{i} \cdot \left( \frac{\varepsilon_{1}}{\mu} \right)^{i+l}
   \right) + $$
$$   + \sum_{i=0}^{n-1} \left( \frac{1}{i!} \cdot
     \frac{\partial^{i}f}{\partial
     u^{i}}(0,\frac{\varepsilon_{1}}{\mu},\tilde{\lambda})
     - \sum_{j\leq i+s} \ a_{ij}(\tilde{\lambda}) \cdot
     \left( \frac{\varepsilon_{1}}{\mu} \right)^{j} \right)
     \cdot \mu^{i} \ + $$
$$   + \sum_{i=n}^{\infty} \left( \frac{1}{i!} \cdot
     \frac{\partial^{i}f}{\partial
     u^{i}}(0,\frac{\varepsilon_{1}}{\mu},\tilde{\lambda})
     - \sum_{j\leq i+s} \ a_{ij}(\tilde{\lambda}) \cdot
     \left( \frac{\varepsilon_{1}}{\mu} \right)^{j} \right)
     \cdot \mu^{i}. $$
Observe again that the $\mathcal{Q}$-analytic functions which
occur in the $i$-th summands of the second series above are of the
form
 $$ \left( \frac{1}{i!} \cdot
     \frac{\partial^{i}f}{\partial
     u^{i}}(0,v,\tilde{x})
     - \sum_{j\leq i+s} \ a_{ij}(\tilde{x}) \cdot
     v^{j} \right) \cdot u^{i} = h_{i}(v,\tilde{x}) \cdot v^{i+s+1} \cdot u^{i} $$
 for some functions $h_{i}$ $\mathcal{Q}$-analytic at zero, $i=0,\ldots,n$. Hence
 $$ \sum_{i=0}^{n-1} \left( \frac{1}{i!} \cdot
     \frac{\partial^{i}f}{\partial
     u^{i}}(0,\frac{\varepsilon_{1}}{\mu},\tilde{\lambda})
     - \sum_{j\leq i+s} \ a_{ij}(\tilde{\lambda}) \cdot
     \left( \frac{\varepsilon_{1}}{\mu} \right)^{j} \right)
     \cdot \mu^{i} = $$
 $$  = \left( \frac{\varepsilon_{1}}{\mu} \right)^{s+1}
   \cdot \sum_{i=0}^{n} \ h_{i}(\frac{\varepsilon_{1}}{\mu},\tilde{\lambda})
   \cdot \varepsilon_{1}^{i}. $$
By {\em op.~cit.\/}, Corollary~2.11,  we have
$$ v(h_{i}(\frac{\varepsilon_{1}}{\mu},\tilde{\lambda})) \in
   \Gamma_{\langle \lambda \rangle} \oplus \mathbb{N} \cdot
   v(\frac{\varepsilon_{1}}{\mu}). $$
Since
$$ \sum_{i=0}^{\infty} \ a_{i,i+l}(\tilde{\lambda}) \cdot
   \mu^{i} \cdot \left( \frac{\varepsilon_{1}}{\mu} \right)^{i+l}
   = \left( \frac{\varepsilon_{1}}{\mu}
   \right)^{l} \cdot \sum_{i=0}^{\infty} \ a_{i,i+l}(\tilde{\lambda}) \cdot
   \varepsilon_{1}^{i} \cdot , \ \ \ l=1,\ldots,s, $$
the values
$$ v \left( \sum_{i=0}^{\infty} \ a_{i,i+l}(\tilde{\lambda}) \cdot
   \mu^{i} \cdot \left( \frac{\varepsilon_{1}}{\mu} \right)^{i+l}
   \right) , \ \ \ l=1,\ldots,s, $$
and
$$   v \left( \sum_{i=0}^{n-1} \left( \frac{1}{i!} \cdot
     \frac{\partial^{i}f}{\partial
     u^{i}}(0,\frac{\varepsilon_{1}}{\mu},\tilde{\lambda})
     - \sum_{j\leq i+s} \ a_{ij}(\tilde{\lambda}) \cdot
     \left( \frac{\varepsilon_{1}}{\mu} \right)^{j} \right)
     \cdot \mu^{i} \right) $$
are pairwise distinct, unless they are infinity. Consequently,
$v(\nu_{-})$ is the minimum of the above $(s+1)$ values. Hence
$$ v(\nu_{-}) < \infty \ \ \mbox{and } \ \ v(\nu_{-} ) \in
   \Gamma_{\langle \lambda \rangle} - (\mathbb{N} \setminus \{ 0 \}) \cdot
   v(\mu), $$
which completes the proof of Lemma~\ref{proof-lem-2}.
\end{proof}


Now, take $n \in \mathbb{N}$ large enough so that
$$ n \cdot v(\varepsilon_{1}) > \min \, \{ \, v(\nu_{-}),v(\nu_{+}) \, \}. $$
Then
$$ v \left( \nu - \sum_{i=0}^{n-1} \, a_{i,i}(\tilde{\lambda}) \cdot
   \varepsilon_{1}^{i} \right) = \min \, \{ v(\nu_{-}), v(\nu_{+}) \}
   \not \in \Gamma_{\langle \lambda \rangle}. $$
This means that $\nu$ is active over the infinitesimals $\lambda$,
concluding the proof of the theorem on an active infinitesimal
$\mathbf{(I_{m})}$ in Case~A.

\vspace{2ex}

{\bf CASE B.} Let us rename the coordinates in $\mathbb{R}^{m}$ in
the following way: the coordinates $x=(x_{1},\dots,x_{q})$
correspond to the infinitesimals $\delta$ and the coordinates
$y=(y_{1},\dots,y_{p})$ correspond to the infinitesimals
$\varepsilon$. In the new variables, the function $f$ can be
written down as $f(u,v,\tilde{x},y)$, $\tilde{x} =
(x_{2},\ldots,x_{q})$. We first establish the following

\vspace{2ex}

{\bf Reduction Step.}
\begin{em}
We can assume that the infinitesimal $\nu$ is of the form $\nu =
f(\mu,\delta_{1}/\mu,\tilde{\delta},\varepsilon)$, where $f$ is a
function $\mathcal{Q}$-analytic at zero, and
\end{em}
$$ v(\varepsilon) > G_{2} \oplus \ldots \oplus G_{r}
   \ \ \mbox{ and } \ \ \Gamma_{\langle \delta,\mu \rangle} <
   G_{1}^{+}. $$

\vspace{1ex}

{\em Proof.} First, we recursively attach the old infinitesimals
$\delta_{2},\ldots,\delta_{q}$, after performing suitable special
modifications, either to the new infinitesimals $\delta'$ or to
the new infinitesimals $\varepsilon'$, so as to fulfil the
conditions
\begin{equation}\label{con-1}
 v(\varepsilon') > G_{2} \oplus \ldots \oplus G_{r}
   \ \ \mbox{ and } \ \ \Gamma_{\langle \delta' \rangle} <
   G_{1}^{+}.
\end{equation}

At the beginning, take as new infinitesimals $\delta'$ those
infinitesimals from $\delta_{2},\ldots,\delta_{q}$, which lie in
the main part of the regular sequence
$\mu,\lambda_{1},\ldots,\lambda_{k}$. Having constructed a
sequence $\delta_{2}',\ldots,\delta_{i}'$, take an infinitesimal
$\delta_{j}$ from among $\delta_{2},\ldots,\delta_{q}$, which has
not yet been considered in the process. If
$$ \Gamma_{\langle \delta_{2}',\ldots,\delta_{i}',\delta_{j} \rangle} < G_{1}^{+}, $$
attach $\delta_{j} =: \delta_{i+1}'$ to the new infinitesimals
$\delta'$. Otherwise $\delta_{j}$ is active over
$\delta_{2}',\ldots,\delta_{i}'$. By the valuation property
$\mathbf{(III_{m})}$, which holds by the induction hypothesis,
there is an $\mathcal{L}$-term
$\tau(\delta_{2}',\ldots,\delta_{i}')$ such that
$$ v(\delta_{j} -  \tau(\delta_{2}',\ldots,\delta_{i}')) > G_{2}
   \oplus \ldots \oplus G_{r}. $$
Via desingularization of $\mathcal{L}$-terms
(Corollary~\ref{rect-cor-2} and \cite[Corollary~2.6]{Now2}), we
can assume, after a suitable change of the infinitesimals
$\delta_{2}',\ldots,\delta_{i}'$ by special modification, that
$$ \tau(\delta_{2}',\ldots,\delta_{i}') =
   \varphi(\delta_{2}',\ldots,\delta_{i}'), $$
where $\varphi$ is a function $\mathcal{Q}$-analytic at zero. Then
we attach the infinitesimal
$$ \omega:= \delta_{j} - \varphi(\delta_{2}',\ldots,\delta_{i}') $$
to the new infinitesimals $\varepsilon$. By substitution $\omega +
\varphi(\delta_{2}',\ldots,\delta_{i}')$ for $\delta_{j}$, we are
done. We continue this process until all infinitesimals
$\delta_{2},\ldots,\delta_{q}$ have been considered. In this
manner, we get infinitesimals $\delta_{2}',\ldots,\delta_{t}'$ and
$\epsilon_{1}',\ldots,\epsilon_{i}'$ that fulfil
conditions~\ref{con-1}.

\vspace{1ex}

Next, consider the infinitesimal $\delta_{1}$. If
$$ \Gamma_{\langle \delta_{2}',\ldots,\delta_{t}',\delta_{1} \rangle} < G_{1}^{+}, $$
we are done by putting $\delta_{1}' := \delta_{1}$. Otherwise
$\delta_{1}$ is active over $\delta_{2}',\ldots,\delta_{t}'$. As
before, by the valuation property $\mathbf{(III_{m})}$ and via
desingularization of $\mathcal{L}$-terms, we can assume that
$$ v(\omega)> G_{2} \oplus \ldots \oplus G_{r} \ \ \mbox{ with } \ \
   \omega:= \delta_{1} - \varphi(\delta_{2}',\ldots,\delta_{t}'), $$
where $\varphi$ is a function $\mathcal{Q}$-analytic at zero. Let
$\delta'=(\delta_{2}',\ldots,\delta_{t}')$. Then, similarly to
{\em op.cit.}, Section~4, we can replace the function $f$ by some
other $\mathcal{Q}$-analytic functions as follows:
$$ \nu = f(\mu,(\varphi(\delta') +
    \omega)/\mu,\delta',\varepsilon') =
    f_{1}(\mu,\varphi(\delta')/\mu, \omega/\mu,\delta',\varepsilon')
    = $$
$$  = f_{2}(\mu,\varphi(\delta')/\mu,
    \omega/\varphi(\delta'),\delta',\varepsilon'). $$
We can thus attach the infinitesimal $\varepsilon_{s+1}' :=
\omega/\varphi(\delta')$ to the infinitesimals $\varepsilon'$, and
then
$$ \nu = f_{2}(\mu,\varphi(\delta')/\mu,\varepsilon_{s+1}',\delta',\varepsilon'). $$

For simplicity, we drop the sign of apostrophe over the name of
infinitesimals. By transforming the function $\varphi$ to a normal
crossing, we may assume that
$$  \nu = f_{3}(\mu,\delta^{\alpha}/\mu,\delta,\varepsilon) $$
for some $\alpha \in \mathbb{N}^{q}$. Replacing the infinitesimals
$\mu$ and $\delta$ by their suitable roots, we may assume that
$\delta^{\alpha}= \delta_{1} \cdot \ldots \cdot \delta_{k}$ for
some $k \leq q$. Now, we can successively lower the number $k$ of
these factors as follows. Since $v(\mu) \not \in \Gamma_{\langle
\delta \rangle}$, exactly one of the two fractions $\delta_{1}
\cdot \ldots \cdot \delta_{k-1}/\mu$ or $\mu/\delta_{1} \cdot
\ldots \cdot \delta_{k-1}$ is an infinitesimal. In the former
case, we get
$$  \nu = f_{3}(\mu,\delta_{1} \cdot \ldots \cdot \delta_{k}/\mu,\delta,\varepsilon) =
    f_{4}(\mu,\delta_{1} \cdot \ldots \cdot \delta_{k-1}/\mu,\delta,\varepsilon); $$
and in the latter
$$  \nu = f_{3}(\mu,\delta_{1} \cdot \ldots \cdot \delta_{k}/\mu,\delta,\varepsilon) =
    f_{4}(\mu,\mu/\delta_{1} \cdot \ldots \cdot \delta_{k-1},
    \delta_{1} \cdot \ldots \cdot \delta_{k}/\mu,\delta,\varepsilon)
    = $$
$$  =  f_{5}(\mu/\delta_{1} \cdot \ldots \cdot \delta_{k-1},
    \delta_{1} \cdot \ldots \cdot
    \delta_{k}/\mu,\delta,\varepsilon), $$
and thus, replacing $\mu$ by $\mu' := \mu/\delta_{1} \cdot \ldots
\cdot \delta_{k-1}$, we get
$$ \nu = f_{5}(\mu',\delta_{k}/\mu',\delta,\varepsilon). $$
Again, we drop the sign of apostrophe. Eventually, we can assume
that
$$ \nu = f_{6}(\mu,\delta_{1}/\mu,\delta_{1},\tilde{\delta},\varepsilon) =
    f_{7}(\mu,\delta_{1}/\mu,\tilde{\delta},\varepsilon), $$
which is the desired result. \hspace{\fill} $\Box$

\vspace{2ex}

Summing up, the above construction yields a new regular sequence
of infinitesimals $\delta,\varepsilon$ which satisfy
conditions~\ref{con-1}. To finish the reduction step, we must
still show that $\Gamma_{\langle \delta,\mu \rangle} < G_{1}^{+}$.
But this follows immediately from the valuation property
$\mathbf{(III_{m})}$, which is at our disposal by the induction
hypothesis, applied to the infinitesimals $\delta$ and $\mu$.

\vspace{2ex}

For the rest of the proof of $\mathbf{(I_{m})}$, we shall keep the
conditions established in the reduction step. In the completion of
the valued field $\langle \lambda,\mu \rangle$ with respect to the
standard valuation $v$, we can present the infinitesimal $\nu$ in
the following form:
$$ \nu = f(\mu,\delta_{1}/\mu,\tilde{\delta},\varepsilon) =
   \sum_{\alpha \in \mathbb{N}^{p}} \; \varepsilon^{\alpha} \cdot
   f_{\alpha}(\mu,\delta_{1}/\mu,\tilde{\delta}), $$
where
$$ f_{\alpha}(u,v,\tilde{x}) := \frac{1}{\alpha!} \cdot
   \frac{\partial^{|\alpha|}f}{\partial y^{\alpha}}
   (u,v,\tilde{x},0), \ \ \ \alpha \in \mathbb{N}^{p}. $$

We need an elementary fact about the standard valuation $v$.


\begin{lemma}\label{proof-lem-3}
Consider a finite number of elements $g_{i},h_{i} \in \langle
\lambda,\mu \rangle$, $i=1,\ldots,n$, such that
$$ v(h_{1}),\ldots,v(h_{n}) > G_{2} \oplus \ldots \oplus G_{r} $$
and
$$ v \left (\sum_{i=1}^{n} \: c_{i} \cdot g_{i} \right) < G_{1}^{+}, \ \ \
   c_{i} \in \mathbb{R}, \ i=1,\ldots,n, $$
for all real linear combinations of the elements
$g_{1},\ldots,g_{n}$. Then there exist $n$ real linear
combinations
$$ G_{j} = \sum_{i=1}^{n} \, c_{ji}g_{i}, \ \ H_{j}
   = \sum_{i=1}^{n} \, d_{ji}h_{i},
   \ \ \ c_{ji},d_{ji} \in \mathbb{R}, \ \ i,j = 1,\ldots,n, $$
such that
$$ \sum_{i=1}^{n} \, h_{i} \cdot g_{i} =
   \sum_{i=1}^{n} \, H_{i} \cdot G_{i},  $$
and that the valuations $v(H_{1}),\ldots,v(H_{n})$ are pairwise
distinct; then, of course, the valuations $v(H_{1} \cdot
G_{1}),\ldots,v(H_{n} \cdot G_{n})$ are pairwise distinct.
\end{lemma}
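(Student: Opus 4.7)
The plan is to translate the lemma into a linear-algebra question about a real invertible matrix and then solve it via a Gram--Schmidt-type construction that exploits the fact that the residue field of the standard valuation $v$ is $\mathbb{R}$. Writing $H_j=\sum_i d_{ji}h_i$ and $G_j=\sum_i c_{ji}g_i$ with real matrices $D=(d_{ji})$ and $C=(c_{ji})$, a direct computation gives
$$
\sum_{j} H_j G_j \ = \ \sum_{i,k} h_i g_k \sum_{j} d_{ji}c_{jk} \ = \ \sum_{i,k} (D^{T}C)_{ik}\, h_i g_k,
$$
so the required identity $\sum h_i g_i=\sum H_i G_i$ is equivalent to $D^{T}C=I_{n}$, i.e.\ $C=(D^{T})^{-1}$. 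Hence it suffices to exhibit an invertible $D\in \mathrm{GL}_n(\mathbb{R})$ such that the $\mathbb{R}$-linear combinations $H_j$ of $h_1,\dots,h_n$ have pairwise distinct valuations; the matrix $C$, and thus the $G_j$'s, is then forced upon us and the required identity holds automatically.

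To produce such a $D$, I work in the finite-dimensional real vector space $V=\sum_i\mathbb{R}h_i\subset\mathcal{R}$ and consider the decreasing filtration $V_\gamma=\{x\in V:v(x)\geq\gamma\}\cup\{0\}$, $\gamma\in\Gamma_{\langle\lambda,\mu\rangle}$, by $\mathbb{R}$-subspaces. The key observation is that each successive quotient $V_\gamma/V_{>\gamma}$ is at most one-dimensional over $\mathbb{R}$: for any two nonzero representatives $x,y\in V_\gamma\setminus V_{>\gamma}$, the quotient $x/y$ lies in the valuation ring of $v$ with $v(x/y)=0$; since the residue field of the standard valuation equals $\mathbb{R}$ (by the convex-hull construction, together with $\mathbb{R}\subset\langle\lambda,\mu\rangle$), the residue of $x/y$ is some $c\in\mathbb{R}^{*}$, whence $v(x-cy)>\gamma$ and $[x]=c[y]$ in $V_\gamma/V_{>\gamma}$. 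Consequently the filtration admits exactly $\dim V\leq n$ strict descents at valuations $\gamma_1<\dots<\gamma_k$ with $k=\dim V$, and choosing any $H_i\in V_{\gamma_i}\setminus V_{>\gamma_i}$ produces an $\mathbb{R}$-basis of $V$ with pairwise distinct valuations $v(H_i)=\gamma_i$. Assuming the $h_i$ are $\mathbb{R}$-linearly independent (the situation of interest in the application to Case~B), this yields $k=n$ and the required invertible matrix $D$.

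For the parenthetical claim that the products $v(H_i\cdot G_i)$ are also pairwise distinct, I would observe that $v(h_i)>G_2\oplus\dots\oplus G_r$ forces the leading $G_1$-component of each $v(H_i)$ to lie in $G_1^{+}$, whereas the hypothesis $v(\sum c_i g_i)<G_1^{+}$ forces the $G_1$-components of the $v(G_i)$ to be $\leq 0$; the separation of the $v(H_i)$'s in their leading $G_1$-components (arranged, if need be, by a further real change of basis to tilt the $\gamma_i$'s away from coincident $G_1$-components) is preserved upon adding the bounded contribution $v(G_i)$, yielding distinctness of $v(H_iG_i)=v(H_i)+v(G_i)$. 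The main technical hinge is the one-dimensionality of the graded pieces in Step~2, which rests entirely on the residue field of the standard valuation being $\mathbb{R}$; once this is in place, both the construction of $D$ and the matrix-inversion bookkeeping are routine.
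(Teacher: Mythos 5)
Your construction is correct and rests on exactly the same valuation-theoretic input as the paper's proof, namely that the residue field of the standard valuation is $\mathbb{R}$, so that two elements of equal valuation can be pushed to strictly higher valuation by a real linear combination. What differs is the organization. The paper argues by induction on $n$ and writes out only the case $n=2$, where it performs a single Gram--Schmidt step by hand ($H_{1}=d_{1}h_{1}$, $H_{2}=d_{2}h_{2}-d_{1}h_{1}$, with the $G_{i}$ adjusted so that the sum is preserved) and leaves the general case to the reader. Your reformulation --- the identity $\sum h_{i}g_{i}=\sum H_{i}G_{i}$ holds as soon as $D^{T}C=I_{n}$ (note this is sufficient rather than equivalent, since the products $h_{i}g_{k}$ need not be linearly independent, but sufficiency is all you need), so the entire problem reduces to finding an invertible $D$ making the $v(H_{j})$ pairwise distinct --- cleanly separates the bookkeeping from the analysis, and your filtration argument (graded pieces of $V=\sum_{i}\mathbb{R}h_{i}$ are at most one-dimensional because the residue field is $\mathbb{R}$) supplies the general-$n$ step uniformly. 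This is a legitimate and arguably more complete route to the same conclusion.

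Two caveats. First, as you yourself flag, your argument requires the $h_{i}$ to be $\mathbb{R}$-linearly independent; otherwise you obtain only $\dim V<n$ combinations with distinct finite valuations and must pad with $H_{j}=0$. The lemma as literally stated fails without this (take $h_{1}=h_{2}$), so it must be read with the convention used elsewhere in the paper (``pairwise distinct, unless they are infinity''), i.e.\ vanishing terms are discarded; the paper's own $n=2$ argument has the identical degeneracy ($H_{2}$ may vanish) and does not comment on it, so this is an imprecision of the statement rather than a defect of your proof. Second, your justification of the parenthetical claim that the $v(H_{i}G_{i})$ are also pairwise distinct is the one genuinely shaky step: distinctness of the $v(H_{i})$ together with the stated constraints on the $G_{1}$-components does not formally force $v(H_{i})+v(G_{i})\neq v(H_{j})+v(G_{j})$, since the $v(G_{i})$ may differ by precisely the offending element of $G_{2}\oplus\ldots\oplus G_{r}$, and your proposed ``tilting by a further change of basis'' is not worked out. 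The paper, however, offers no argument at all for this clause (``then, of course, \dots''), and only the distinctness of the $v(H_{j})$ is used downstream, so this does not undermine your proof of the substantive part of the lemma.
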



\begin{proof}
One can proceed with induction with respect to $n$. We show the
case $n=2$. The general case is similar, the details being left to
the reader. If $v(h_{1}) \neq v(h_{1})$, we are done. Otherwise
there are real numbers $d_{1}, d_{2} \neq 0$ such that
$$ v(d_{1}h_{1} - d_{2}h_{2}) > v(h_{1}) = v(h_{2}), $$
and then
$$ h_{1} \cdot g_{1} + h_{2} \cdot g_{2} =
   d_{1} h_{1} \cdot d_{1}^{-1}g_{1} + d_{2} h_{2} \cdot d_{2}^{-1} g_{2}
   =  $$
$$ = d_{1} h_{1} \cdot (d_{1}^{-1}g_{1} + d_{2}^{-1} g_{2}) +
   (d_{2} h_{2} - d_{1} h_{1}) \cdot d_{2}^{-1} g_{2}. $$
Putting
$$ H_{1} := d_{1}h_{1}, \ H_{2} := d_{2}h_{2} - d_{1}h_{1}, \
   G_{1} := d_{1}^{-1}g_{1} + d_{2}^{-1}g_{2}, \ G_{2} :=
   d_{2}^{-1}g_{2}, $$
we get the required result.
\end{proof}


Applying Lemma~\ref{proof-lem-3} to the elements
$$ h_{\alpha} = \varepsilon^{\alpha}  \ \ \mbox{ and } \ \
   g_{\alpha} = f_{\alpha}(\mu,\delta_{1}/\mu,\tilde{\delta}),
   \ \ \ \alpha \in \mathbb{N}^{p}, $$
we shall recursively define an increasing sequence of positive
integers $(N_{k})$ and three sequences of infinitesimals
$(f_{k})$, $(\phi_{k})$ and $(\psi_{k})$, $k \in \mathbb{N}$.
Initially, let $\gamma \in G_{1}^{+}$, $N_{0}$ be any positive
integer and
$$ A_{0} := \{ \alpha \in \mathbb{N}^{p}: \ v(\varepsilon^{\alpha}) <
   N_{0} \gamma \, \} , \ \ n_{0} := \sharp \, A_{0}.  $$
With the notation of Lemma~\ref{proof-lem-3}, we get
$$ f_{0} := \sum_{\alpha \in A_{0}} \; \varepsilon^{\alpha} \cdot
   g_{\alpha} = \sum_{i=1}^{n_{0}}  H_{0,i} \cdot G_{0,i}; $$
put
$$ \psi_{0} := 0, \ \ \varphi_{0} := \sum \{ H_{0,i} \cdot G_{0,i}: \; v(H_{i}) < N_{0}
   \gamma \, \} \ \ \mbox{ and } \ \ \psi_{1} := f_{0} -
   \varphi_{0}. $$
Take $N_{1}$ so large that $v(H_{0,i}) < N_{1} \gamma$ for all
$i=1,\ldots,n_{0}$. Let
$$ A_{1} := \{ \alpha \in \mathbb{N}^{p}: \ N_{0}\gamma \leq v(\varepsilon^{\alpha}) <
   N_{1} \gamma \, \}  $$
and $n_{1}$ be the sum of $\sharp \, A_{1}$ and the number of the
summands of $\psi_{1}$. Again, with the notation of
Lemma~\ref{proof-lem-3}, we get
$$ f_{1} := \sum_{\alpha \in A_{1} \setminus A_{0}} \; \varepsilon^{\alpha} \cdot
   g_{\alpha} + \psi_{1} = \sum_{i=1}^{n_{1}}  H_{1,i} \cdot G_{1,i}; $$
clearly, $v(H_{1,i}) \geq N_{0} \gamma$ for $i=1,\ldots,n_{1}$;
put
$$ \varphi_{1} := \sum_{i} \{ H_{1,i} \cdot G_{1,i}: \; v(H_{1,i}) < N_{1}
   \gamma \, \} \ \ \mbox{ and } \ \ \psi_{2} := f_{1} -
   \varphi_{1}. $$
We continue this process recursively. By construction, each
$\varphi_{k}$ is a finite sum of the form
$$ \varphi_{k}:= \sum_{i} H_{k,i} \cdot G_{k,i}\, , \ \ \mbox{ where }
   \ N_{k-1}\gamma \leq v(H_{k,i}) < N_{k}\gamma \ \mbox{ for all }
   i, $$
and the values $v(H_{k,i})$ are pairwise distinct. It is easy to
check that
$$ \nu = \sum_{\alpha \in \mathbb{N}^{p}} \; \varepsilon^{\alpha} \cdot
   g_{\alpha} = \sum_{k=0}^{\infty} \: f_{k} =
   \sum_{k=0}^{\infty} \: \varphi_{k}. $$

We encounter two possibilities: either $G_{k,i} \in \langle \delta
\rangle$ for each $k \in \mathbb{N}$ and every $i$, or there is a
$k \in \mathbb{N}$ such that $G_{k,i} \not \in \langle \delta
\rangle$ for some $i$. We first show that the former one leads to
a contradiction. Indeed, for each $k \in \mathbb{N}$, $G_{k,i}$
are of the form
$$ G_{k,i} = G_{k,i}(\mu,\delta_{1}/\mu,\tilde{\delta}), \ \ \ i=1,\ldots,n_{k},$$
where $G_{k,i}(u,v,\tilde{x})$ are functions
$\mathcal{Q}$-analytic in a common neighbourhood of zero.

Since the infinitesimals $\mu, \delta$ are analytically
independent, the tantamount conditions
$$ G_{k,i}(\mu,\delta_{1}/\mu,\tilde{\delta}) \in
   \langle \delta \rangle \ \ \ \text{or} \ \ \
   G_{k,i}(\mu,\delta_{1}/\mu,\tilde{\delta}) = \tau(\delta) $$
for an $\mathcal{L}$-term $\tau(x)$, hold iff the
$\mathcal{L}$-term
$$ G_{k,i}(u,x_{1}/u,\tilde{x}) - \tau(x) $$
vanishes on an open special cube containing the infinitesimals
$\mu, \delta$. Then the partial derivative
$$  \partial/\partial u \, G_{k,i}(u,x_{1}/u,\tilde{x})  $$
vanishes on that special cube and, in particular,
$$  \partial/\partial u \, G_{k,i}(\mu,\delta_{1}/\mu,\tilde{\delta}) =0. $$

Therefore, the former possibility implies that the partial
derivative
$$ \partial/\partial u \, f(u,x_{1}/u,\tilde{x},y) $$
would vanish for $u=\mu$, $x=\delta$ and $y=\varepsilon$. Again,
this partial derivative would vanish on a special cube containing
the infinitesimals $\mu$, $\delta$ and $\varepsilon$, and thus it
would vanish identically, by the identity principle for
quasianalytic functions. As before, when considering Case~A, we
deduce that the function
$$ f(u,x_{1}/u,\tilde{x},y) $$
would coincide with a function $g(x,y)$ $\mathcal{Q}$-analytic at
zero. Hence
$$ \nu = g(\delta,\varepsilon) \in \langle \delta,\varepsilon \rangle =
    \langle \lambda \rangle, $$
contrary to the assumption of $\mathbf{(I_{m})}$.

\vspace{1ex}

In this fashion, we may assume that the latter possibility holds.
Obviously, we can write down the infinitesimal $\nu$ as below
$$ \nu = \sum_{k=0}^{\infty} \: \varphi_{k} = \sum_{j=0}^{\infty} \:
   H_{j} \cdot G_{j}, $$
where the values $v(H_{j})$, $j \in \mathbb{N}$, are pairwise
distinct and for any $\gamma \in G_{1}$ there are only finitely
many $j$ for which $v(H_{j}) < \gamma$. Under the circumstances,
the set
$$ J := \{ j \in \mathbb{N}: \; G_{j} \not \in \langle \delta \rangle \} \neq
   \emptyset $$
is non-empty. There is, of course, a unique $j_{0} \in J$ such
that
$$ v(H_{j_{0}}) = \min \, \{ v(H_{j}): \, j \in J \}. $$
Then
$$ I := \{ j \in \mathbb{N}: \: v(H_{i}) < v(H_{i_{0}}) \} $$
is a finite subset of $\mathbb{N}$.

Now, it follows from the induction hypothesis $\mathbf{(I_{m-1})}$
that there is an element $\tau(\delta) \in \langle \delta \rangle$
such that
$$ v(G_{j_{0}} - \tau(\delta)) \not \in \Gamma_{\langle \delta \rangle}. $$
Then
$$ \Lambda := H_{j_{0}} \cdot \tau(\delta) + \sum_{j \in I}
   \; H_{j} \cdot G_{j} \in \langle \delta,\varepsilon \rangle =
   \langle \lambda \rangle, $$
because $H_{j} \in \langle \varepsilon \rangle$ and $G_{j} \in
\langle \delta \rangle$ for all $j \in I$. It is not difficult to
check that
$$ v(\nu - \Lambda) = v(H_{j_{0}}) + v(G_{j_{0}} - \tau(\delta))
   \not \in \Gamma_{\langle \delta,\varepsilon \rangle} =
   \Gamma_{\langle \lambda \rangle}. $$
This means that $\nu$ is active over the infinitesimals $\lambda$,
which completes the proof of the theorem on an active
infinitesimal $\mathbf{(I_{m})}$.

\section{Quantifier elimination and description of definable
functions by terms}

In this section we are going to develop an approach to quantifier
elimination and description of definable functions by terms in the
language augmented by the names of rational powers, which is much
shorter and more natural than the one in~\cite{Now2}. Observe
first that one can introduce a well-defined notion of the
dimension of sets defined piecewise by $\mathcal{L}$-terms.
Indeed, every such set $E$ is a finite union of special cubes
$S_{i}$ ({\em op.~cit.\/}, Theorem~2.1 and Corollary~2.3), and one
can put
$$ \dim\, E := \max\, \dim\, S_{i}. $$
It is easy to check that the dimension of a set $E$ does not
depend on the decomposition into special cubes $S_{i}$.

\vspace{1ex}

By virtue of the exchange property for $\mathcal{L}$-terms~from
Section~3, $\mathbf{(IV_{m})}$, $m \in \mathbb{N}$,  ({\em
op.~cit.}, Corollary~4.9), we have at our disposal a well-founded
concept of rank for the substructures of a given model
$\mathcal{R}$ of the universal diagram $T$. This allows us to
establish the following result, which is a generalization of {\em
op.~cit.\/}, Proposition~5.4.

\begin{proposition}\label{proof-prop-1}
Consider a map $f: \mathbb{R}^{d} \longrightarrow \mathbb{R}^{m}$
given piecewise by $\mathcal{L}$-terms and such that for every
special cube $S \subset \mathbb{R}^{m}$ or, equivalently, for
every subset $E$ of $\mathbb{R}^{m}$ given piecewise by
$\mathcal{L}$-terms, we have
$$ \dim\, f^{-1}(S) \leq \dim\, S \mbox{ or } \
   \dim\, f^{-1}(E) \leq \dim\, E. $$
Then $f$ admits a section given piecewise by $\mathcal{L}$-terms,
i.e.\ there is a function $\xi: \mathbb{R}^{m} \longrightarrow
\mathbb{R}^{d}$ given piecewise by $\mathcal{L}$-terms such that
$f(\xi(y))=y$ for every point $y \in \mathbb{R}^{m}$.
\end{proposition}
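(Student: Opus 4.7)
The plan is to reduce the statement, via model-theoretic compactness, to a pointwise existence assertion in a non-standard model $\mathcal{R}$ of the universal diagram $T$: for every $y \in f(\mathbb{R}^{d})^{\mathcal{R}}$ there is $x \in \mathcal{R}^{d}$ with $f(x) = y$ and $x \in \langle y \rangle^{d}$. Once this is known, compactness yields finitely many $\mathcal{L}$-terms $\xi_{1},\ldots,\xi_{N}$ and a partition of $f(\mathbb{R}^{d})$ into subsets given piecewise by $\mathcal{L}$-terms, on each of which one of the $\xi_{l}$ serves as a section; sending the complement of $f(\mathbb{R}^{d})$ to any fixed value produces the required global piecewise-term section $\xi$.

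To prove the pointwise statement, pick any $x \in f^{-1}(y)$. By the exchange property $\mathbf{(IV_{m})}$ (Remark~\ref{rem-span}), the span operation $\langle \cdot \rangle$ is a pregeometry on $\mathcal{R}$, so rank is well-defined on its substructures. Since each coordinate of $y = f(x)$ is an $\mathcal{L}$-term in $x$, one has $\langle y \rangle \subset \langle x \rangle$. Let $S \subset \mathbb{R}^{m}$ be a special cube of minimal dimension $k$ containing $y$ in its interior, with parametrising $\mathcal{L}$-term diffeomorphism $\sigma : (0,1)^{k} \to S$; write $y = \sigma(\eta)$ with $\eta = (\eta_{1},\ldots,\eta_{k})$ analytically independent infinitesimals. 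Since both $\sigma$ and $\sigma^{-1}$ are given by $\mathcal{L}$-terms, $\langle \eta \rangle = \langle y \rangle$, and by $\mathbf{(V_{m})}$ we get $\operatorname{rank}\langle y \rangle = k = \dim S$. The dimension hypothesis yields $\dim f^{-1}(S) \leq k$; decomposing the term-defined set $f^{-1}(S)$ into special cubes (Corollary~2.3 of \cite{Now2}) places $x$ in some special cube $T = \tau((0,1)^{\ell})$ with $\ell \leq k$, and writing $x = \tau(\mu)$ gives $\langle x \rangle \subset \langle \mu \rangle$, so $\operatorname{rank}\langle x \rangle \leq \ell \leq k = \operatorname{rank}\langle y \rangle$. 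Combined with the reverse inclusion $\langle y \rangle \subset \langle x \rangle$, the pregeometry property forces $\langle x \rangle = \langle y \rangle$, whence $x \in \langle y \rangle^{d}$.

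The main obstacle is the dictionary between the standard geometric dimension of term-defined subsets of $\mathbb{R}^{d}$ and the model-theoretic rank in $\mathcal{R}$: specifically, verifying that the minimal dimension of a special cube containing a point $x \in \mathcal{R}^{d}$ coincides with $\operatorname{rank}\langle x \rangle$. This equivalence is a consequence of $\mathbf{(V_{m})}$ (analytically independent tuples generate substructures of rank equal to their length), but it must be applied in combination with the special-cube decomposition of $f^{-1}(S)$ to convert the geometric inequality $\dim f^{-1}(S) \leq \dim S$ into the rank inequality used above. Once this translation is in place, the exchange property and a standard compactness-and-patching argument complete the proof in a short and routine fashion.
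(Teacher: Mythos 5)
Your proposal is correct and follows essentially the same route as the paper: reduce by model-theoretic compactness to showing, in a non-standard model of $T$, that any $x$ with $f(x)=y$ satisfies $\langle x\rangle=\langle y\rangle$, via the dictionary between the minimal dimension of a special cube containing a tuple and the rank of the substructure it generates, the dimension hypothesis applied to $f^{-1}(S)$, and the pregeometry property from $\mathbf{(IV_{m})}$/$\mathbf{(V_{m})}$. The only difference is cosmetic: you derive $\operatorname{rk}\langle x\rangle\leq\operatorname{rk}\langle y\rangle$ directly, whereas the paper argues the contrapositive by contradiction.
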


\begin{proof}
We may, of course, assume that $f: (0,1)^{d} \longrightarrow
(0,1)^{m}$. We first show that there exists a family
$(t_{\iota}(y))_{\iota \in I}$ of $\mathcal{L}$-terms,
$t_{\iota}(y)=(t_{\iota,1}(y),\ldots,t_{\iota,d}(y))$, such that
the infinite disjunction
$$ \bigvee_{\iota \in I} \; \left[ \left( b=f(a) \wedge
   a \in (0,1)^{d} \right) \ \Longrightarrow \
   b = f(t_{\iota}(b)) \right] $$
holds for any tuples $a \in (0,1)^{d}$ and $b \in (0,1)^{m}$ in an
arbitrary model $\mathcal{R}$ of the theory $T$. So take any
elements $a \in (0,1)^{d}$ and $b \in (0,1)^{m}$ for which
$b=\varphi(a)$. We may, of course, confine our analysis to the
case where $a=\lambda$ and $b=\mu$ are infinitesimals. Let $k :=
\mbox{rk}\, \langle \lambda \rangle$; then the infinitesimals
$\lambda$ lie on a special cube of dimension $k$, but lie on no
special cube of dimension $<k$. Obviously,
$$ \langle \mu \rangle \subset \langle \lambda \rangle \ \ \mbox{
   and } \ \ \mbox{rk}\, \langle \mu \rangle \leq
   \mbox{rk}\, \langle \lambda \rangle . $$
Were $\mbox{rk}\, \langle \mu \rangle < k = \mbox{rk}\, \langle
\lambda \rangle$, then the infinitesimals $\mu$ would lie on a
special cube $S$ of dimension $<k$, and thus it follows from the
assumption that the infinitesimals $\lambda$ would lie on the set
$f^{-1} (S)$ of dimension $<k$, which is impossible. Consequently,
$$ \mbox{rk}\, \langle \mu \rangle =
   \mbox{rk}\, \langle \lambda \rangle  \ \ \mbox{ and } \ \
   \langle \mu \rangle = \langle \lambda \rangle ; $$
the last equality follows from the suitable property of rank
operation ({\em op.~cit.}, Section~5), which is formulated in
Section~3, assertion $\mathbf{(V_{m})}$, $m \in \mathbb{N}$.
Therefore the infinitesimals $\lambda$ can be expressed by
$\mathcal{L}$-terms taken on the infinitesimals $\mu$, and the
assertion follows.

\vspace{1ex}

Now, through model-theoretic compactness, one can find a finite
set $\iota_{1},\ldots,\iota_{n} \in I$ of indices for which the
finite disjunction
$$ \bigvee_{k=1}^{n} \left[ \left( b=f(a) \wedge a \in (0,1)^{d} \right)
   \ \Longrightarrow \ b = f(t_{\iota_{k}}(b)) \right] $$
holds for any tuples $a$ and $b$ in an arbitrary model
$\mathcal{R}$ of the theory $T$. In particular, this finite
disjunction is satisfied in the standard model
$\mathbb{R}_{\mathcal{Q}}$, concluding the proof of the
proposition.
\end{proof}

\begin{remark}\label{proof-rem-2} The assumption of Proposition~\ref{proof-prop-1}
is satisfied by every function $f$ given piecewise by
$\mathcal{L}$-terms which is an immersion. More generally,
consider a decomposition of $\mathbb{R}^{d}$ into finitely many
leaves (i.e.\ $\mathcal{C}^{1}$ submanifolds) $F_{j}$ given
piecewise by $\mathcal{L}$-terms, and a function $f$ given
piecewise by $\mathcal{L}$-terms whose restriction to each $F_{j}$
is an immersion. Then $f$ satisfies that assumption too.
\end{remark}

\begin{corollary}\label{QE-cor-1}
Under the assumptions of Proposition~\ref{proof-prop-1}, the image
$f(\mathbb{R}^{d})$ is given piecewise by $\mathcal{L}$-terms.
\end{corollary}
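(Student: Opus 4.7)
The plan is to use the finite collection of $\mathcal{L}$-term ``sections'' underlying Proposition~\ref{proof-prop-1} in order to eliminate the existential quantifier defining the image of $f$, replacing it with a quantifier-free disjunction built from $\mathcal{L}$-terms.

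First, I would unravel what the proof of Proposition~\ref{proof-prop-1} actually produces, namely a finite list of tuples of $\mathcal{L}$-terms
$$ t_{1}(y), \ldots, t_{n}(y) : \mathbb{R}^{m} \longrightarrow \mathbb{R}^{d}, $$
with the property, valid in every model of the universal theory $T$ and hence in the standard model $\mathbb{R}_{\mathcal{Q}}$, that whenever $b = f(a)$ for some $a$ in the domain, at least one index $k$ satisfies $f(t_{k}(b)) = b$. Combined with the trivial reverse implication, this yields the quantifier-free equivalence
$$ y \in f(\mathbb{R}^{d}) \ \Longleftrightarrow \ \bigvee_{k=1}^{n} \bigl[\, t_{k}(y) \in \mathbb{R}^{d} \ \wedge \ f(t_{k}(y)) = y \,\bigr]. $$

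Next, because $f$ itself is given piecewise by $\mathcal{L}$-terms, there is a finite partition $\mathbb{R}^{d} = \bigsqcup_{j} D_{j}$ in which each $D_{j}$ is given piecewise by $\mathcal{L}$-terms and the restriction $f|_{D_{j}}$ coincides with a tuple $f_{j}(x)$ of $\mathcal{L}$-terms. Substituting, on the subset $\{\, y \in \mathbb{R}^{m}: t_{k}(y) \in D_{j} \,\}$ -- itself given piecewise by $\mathcal{L}$-terms, since $t_{k}$ is a tuple of $\mathcal{L}$-terms and $D_{j}$ is describable by such -- the composition $f \circ t_{k}$ equals the $\mathcal{L}$-term $f_{j}(t_{k}(y))$. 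Hence every set $\{\, y : f(t_{k}(y)) = y \,\}$ is given piecewise by $\mathcal{L}$-terms, and the finite union
$$ f(\mathbb{R}^{d}) \ = \ \bigcup_{k=1}^{n} \{\, y \in \mathbb{R}^{m} : f(t_{k}(y)) = y \,\} $$
inherits the same property, which is the desired conclusion.

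The only delicate point I anticipate is the bookkeeping needed to index the pieces in which each $t_{k}(y)$ lands under the piecewise description of $f$, and to check that all the resulting conditions remain expressible by Boolean combinations of $\mathcal{L}$-term equalities and inequalities. Once this is arranged, the argument reduces to substitution of $\mathcal{L}$-terms into $\mathcal{L}$-terms together with finite unions and intersections, all operations internal to the $\mathcal{L}$-term formalism, so no ingredient beyond Proposition~\ref{proof-prop-1} is required.
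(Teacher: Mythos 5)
Your proposal is correct and follows essentially the same route as the paper: the paper likewise takes the finitely many $\mathcal{L}$-term tuples $\tau_{i}(y)$ describing the section from Proposition~\ref{proof-prop-1} and characterizes the image by $y \in f(\mathbb{R}^{d}) \Leftrightarrow \bigvee_{i} f(\tau_{i}(y)) = y$, the defining conditions being the piecewise $\mathcal{L}$-terms $f(\tau_{i}(y)) - y$. Your extra bookkeeping about composing the piecewise description of $f$ with the terms $t_{k}$ is a harmless elaboration of a step the paper treats as immediate.
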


\begin{proof}
Indeed, suppose the section $\xi$ is given by a finite number of
$\mathcal{L}$-terms
$$ \tau_{i}(y)=(\tau_{i1}(y),\ldots,\tau_{id}(y)), \ \ \
    i=1,\ldots,s, \ \ y=(y_{1},\ldots,y_{m}), $$
i.e. \ $x=\xi(y)$ \ iff  \ $\bigvee_{i=1}^{s} \ x=\tau_{i}(y)$.
Then
$$ y \in f(\mathbb{R}^{d})  \ \ \Leftrightarrow \ \ \bigvee_{i=1}^{s} \
   f(\tau_{i}(y))=y, $$
and thus the image $f(\mathbb{R}^{d})$ is given by
$\mathcal{L}$-terms $f(\tau_{i}(y))-y$, $i=1,\ldots,s$, as
desired.
\end{proof}

By an immersion cube $C \subset \mathbb{R}^{m}$ we mean the image
$\varphi((0,1)^{d}$ where $\varphi$ is a $\mathcal{Q}$-map in a
neighbourhood of the compact cube $[0,1]^{d}$, whose restriction
to $(0,1)^{d}$ is an immersion. As demonstrated in our
paper~\cite{Now1}, the theorem on decomposition into special cubes
along with the technique of fiber cutting make it possible to
decompose every bounded $\mathcal{Q}$-subanalytic set into
finitely many immersion cubes ({\em op.~cit.\/}, Corollary~1).
This, in turn, and Corollary~\ref{QE-cor-1}, immediately yield
quantifier elimination for the expansion
$\mathbb{R}_{\mathcal{Q}}$ of the real field with restricted
quasianalytic functions in the language $\mathcal{L}$ augmented by
the names of rational powers (cf.~\cite{Now2}, Theorem~5.8):

\begin{theorem}\label{QE-th} (Quantifier Elimination)
Every set definable in the structure $\mathbb{R}_{\mathcal{Q}}$ is
given piecewise by a finite number of $\mathcal{L}$-terms.
\end{theorem}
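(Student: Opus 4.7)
The plan is to combine three ingredients already assembled in the paper: the characterization of definable sets as $\mathcal{Q}$-subanalytic sets in a semialgebraic compactification, the decomposition of bounded $\mathcal{Q}$-subanalytic sets into immersion cubes from \cite{Now1}, and Corollary~\ref{QE-cor-1}. The target is to show that the ``piecewise by $\mathcal{L}$-terms'' description, which has been set up for images of section-admitting maps, in fact describes every definable set.

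First I would reduce to the bounded case. Since every definable subset of $\mathbb{R}^{n}$ coincides with a $\mathcal{Q}$-subanalytic set in a semialgebraic compactification of $\mathbb{R}^{n}$ (as recalled in the Introduction), it suffices to show that every bounded $\mathcal{Q}$-subanalytic set $X \subset \mathbb{R}^{n}$ is given piecewise by finitely many $\mathcal{L}$-terms. Appealing to \cite[Corollary~1]{Now1}, I write
$$ X = \bigcup_{i=1}^{N} \varphi_{i}((0,1)^{d_{i}}), $$
where each $\varphi_{i}$ extends to a $\mathcal{Q}$-map on a neighbourhood of $[0,1]^{d_{i}}$ and the restriction of $\varphi_{i}$ to $(0,1)^{d_{i}}$ is an immersion. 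Since a finite union of sets piecewise given by $\mathcal{L}$-terms is again piecewise given by $\mathcal{L}$-terms, it is enough to treat a single immersion cube $\varphi_{i}((0,1)^{d_{i}})$.

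Next I would verify that each $\varphi_{i}$ meets the hypothesis of Proposition~\ref{proof-prop-1}. This is exactly the content of Remark~\ref{proof-rem-2}: an $\mathcal{L}$-term-given map whose restriction is an immersion satisfies $\dim \varphi_{i}^{-1}(E) \leq \dim E$ for every subset $E$ of $\mathbb{R}^{n}$ given piecewise by $\mathcal{L}$-terms (this uses only the elementary fact that the preimage of a leaf of dimension $\leq k$ under an immersion has dimension $\leq k$, and the decomposition of $E$ into special cubes). Consequently Corollary~\ref{QE-cor-1} applies and produces a piecewise $\mathcal{L}$-term description of $\varphi_{i}((0,1)^{d_{i}})$. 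Taking the union over $i$ yields the desired description of $X$, and hence of an arbitrary definable set after undoing the compactification.

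I do not anticipate a serious obstacle: the substantive work, namely the theorem on an active infinitesimal $\mathbf{(I_{m})}$, the exchange property, and the existence of piecewise $\mathcal{L}$-term sections (Proposition~\ref{proof-prop-1}), has already been carried out. The only care needed is bookkeeping when one passes from the bounded $\mathcal{Q}$-subanalytic set in the compactification back to the original definable set in $\mathbb{R}^{n}$, and checking that the immersion-cube decomposition of \cite{Now1} indeed falls under Remark~\ref{proof-rem-2} rather than requiring a stronger hypothesis on $\varphi_{i}$.
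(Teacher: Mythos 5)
Your proposal is correct and follows essentially the same route as the paper: the paper likewise reduces to bounded $\mathcal{Q}$-subanalytic sets, decomposes them into immersion cubes via \cite[Corollary~1]{Now1}, and then invokes Corollary~\ref{QE-cor-1} (through Proposition~\ref{proof-prop-1} and Remark~\ref{proof-rem-2}) to describe each image piecewise by $\mathcal{L}$-terms. Your write-up merely makes explicit the bookkeeping (compactification, finite unions) that the paper leaves as ``immediate.''
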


A fortiori, the structure $\mathbb{R}_{\mathcal{Q}}$ is model
complete. Let us mention that one can apply decomposition into
immersion cubes (cf.~\cite{Now1}) to prove that
$\mathbb{R}_{\mathcal{Q}}$ is a polynomially bounded, o-minimal
structure which admits $\mathcal{Q}$-analytic cell decomposition
(being established in~\cite{Rol-Spei-Wil}); the last result
requires an induction procedure (cf.~\cite{Now1}).

\vspace{1ex}

We now wish to turn to the problem of description of definable
function by $\mathcal{L}$-terms.

\begin{theorem}\label{DF-th}
Each definable function $f: \mathbb{R}^{m} \longrightarrow
\mathbb{R}$ is piecewise given by a finite number of
$\mathcal{L}$-terms.
\end{theorem}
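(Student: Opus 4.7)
The natural strategy is to apply Quantifier Elimination (Theorem~\ref{QE-th}) to the graph of $f$ and then recover $f$ itself from its graph via the section extraction of Proposition~\ref{proof-prop-1}. Since the decomposition into special cubes is available only for bounded sets, as a preliminary reduction I replace $f$ by $\tilde{f} := f/(1+f^{2})^{1/2}$, a composition with the $\mathcal{L}$-term $z \mapsto z/(1+z^{2})^{1/2}$, which takes values in $(-1,1)$; a piecewise $\mathcal{L}$-term description of $\tilde{f}$ immediately yields one for $f = \tilde{f}/(1-\tilde{f}^{2})^{1/2}$. Thus it suffices to assume the graph $\Gamma_{f} \subset \mathbb{R}^{m+1}$ is bounded.

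First I would apply Theorem~\ref{QE-th} to obtain $\Gamma_{f}$ as a set given piecewise by $\mathcal{L}$-terms, and decompose it as a finite union of special cubes $\Gamma_{f} = \bigcup_{i=1}^{p} S_{i}$, with associated special modifications $\varphi_{i}: (0,1)^{d_{i}} \longrightarrow S_{i}$. Let $\pi: \mathbb{R}^{m+1} \longrightarrow \mathbb{R}^{m}$ denote projection onto the first $m$ coordinates and $\pi': \mathbb{R}^{m+1} \longrightarrow \mathbb{R}$ projection onto the last, and set $F_{i} := \pi \circ \varphi_{i}$. Since $\Gamma_{f}$ is a graph, $\pi|_{\Gamma_{f}}$ is injective, so each $F_{i}$ is a definable bijection of $(0,1)^{d_{i}}$ onto $\pi(S_{i})$.

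Next I verify the hypothesis of Proposition~\ref{proof-prop-1} for each $F_{i}$: for every subset $E \subset \mathbb{R}^{m}$ piecewise given by $\mathcal{L}$-terms, $F_{i}$ maps $F_{i}^{-1}(E)$ bijectively onto $F_{i}((0,1)^{d_{i}}) \cap E \subset E$, so by o-minimality of $\mathbb{R}_{\mathcal{Q}}$
$$ \dim F_{i}^{-1}(E) \leq \dim E. $$
Proposition~\ref{proof-prop-1} then delivers a section $\xi_{i}: \mathbb{R}^{m} \longrightarrow \mathbb{R}^{d_{i}}$ of $F_{i}$, piecewise given by $\mathcal{L}$-terms, with $F_{i}(\xi_{i}(y)) = y$ on $\pi(S_{i})$. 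For $y \in \pi(S_{i})$ the point $\varphi_{i}(\xi_{i}(y))$ then lies in $\Gamma_{f}$ and has first $m$ coordinates equal to $y$, whence
$$ f(y) = \pi'(\varphi_{i}(\xi_{i}(y))), $$
an $\mathcal{L}$-term expression. Since the images $\pi(S_{i})$ cover $\mathbb{R}^{m}$ and each is piecewise given by $\mathcal{L}$-terms (Corollary~\ref{QE-cor-1}), these local formulas glue to the desired global piecewise $\mathcal{L}$-term description of $f$.

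I expect the only nontrivial step to be the dimension inequality feeding Proposition~\ref{proof-prop-1}; this rests on the fact that definable bijections preserve dimension in the polynomially bounded o-minimal structure $\mathbb{R}_{\mathcal{Q}}$, which is standard. The rest is a straightforward assembly of quantifier elimination, the special cube decomposition, and the section extraction result.
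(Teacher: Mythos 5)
Your proof is essentially the paper's: both arguments extract a piecewise-$\mathcal{L}$-term section of the projection from the graph of $f$ onto $\mathbb{R}^{m}$ via Proposition~\ref{proof-prop-1} and then write $f$ as the last coordinate composed with that section, the only differences being that you decompose the graph into special cubes and check the dimension hypothesis through injectivity of the projection plus invariance of o-minimal dimension under definable bijections, whereas the paper uses a $\mathcal{C}^{1}$ cell decomposition and the immersion criterion of Remark~\ref{proof-rem-2}. One small point to repair: composing with $z/(1+z^{2})^{1/2}$ bounds only the range, not the domain, so the graph is still unbounded; before invoking the special cube decomposition you must also compactify (or finitely cover and invert) the $x$-variables --- a routine step that the paper likewise leaves implicit.
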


\begin{proof}
Indeed, consider the graph $F \subset
\mathbb{R}^{m+1}=\mathbb{R}^{m}_{x} \times \mathbb{R}_{y}$ of the
function $f$ and denote by
$$ p: F \longrightarrow \mathbb{R}^{m}_{x} \ \ \mbox{and } \ \
   q: F \longrightarrow \mathbb{R}_{y} $$
the canonical projections. Via cell decomposition (of class
$\mathcal{C}^{1}$), the graph $F$ can be partitioned into finitely
many cells $C_{i}$ defined by $\mathcal{L}$-terms such that the
restriction of $p$ to each cell $C_{i}$ is an immersion and, in
fact, a diffeomorphism onto the image $p(C_{i})$. It follows from
Proposition~\ref{proof-prop-1} (cf.\ Remark~\ref{proof-rem-2})
that each inverse
$$ p^{-1}: p(C_{i}) \longrightarrow C_{i}, \ \ \ i=1,\ldots,s, $$
is given piecewise by finitely many $\mathcal{L}$-terms, and thus
so is the restriction of $f = q \circ p^{-1}$ to each set
$p(C_{i})$. This completes the proof.
\end{proof}

We immediately obtain the following three corollaries.

\begin{corollary}\label{QE-cor-2}
The structure $\mathbb{R}_{\mathcal{Q}}$ admits cell
decompositions defined piecewise by $\mathcal{L}$-terms, and hence
Skolem functions (of choice) given piecewise by
$\mathcal{L}$-terms.
\end{corollary}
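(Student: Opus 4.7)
The plan is to bootstrap from the standard o-minimal cell decomposition theorem for $\mathbb{R}_{\mathcal{Q}}$ (available because the structure is o-minimal, as recalled in the introduction), and then refine the cell decomposition using Theorems \ref{QE-th} and \ref{DF-th} so that both the cells and their describing functions are piecewise given by $\mathcal{L}$-terms. I would proceed by induction on the ambient dimension $m$.

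\textbf{Base and inductive step for cell decomposition.} For $m=1$ there is nothing to prove, since a cell in $\mathbb{R}$ is a point or an open interval, whose endpoints are constants and hence $\mathcal{L}$-terms. Assume the result for dimension $m-1$. Given finitely many definable subsets $A_1,\ldots,A_k\subset\mathbb{R}^m$, the o-minimal cell decomposition theorem supplies a cell decomposition $\mathcal{C}$ of $\mathbb{R}^m$ compatible with each $A_i$. Each non-basis cell $C\in\mathcal{C}$ is either a graph $\{(x,\phi(x)):x\in C'\}$ or a band $\{(x,y):x\in C',\,\phi_1(x)<y<\phi_2(x)\}$ (with $\phi_i$ possibly $\pm\infty$) over a cell $C'\subset\mathbb{R}^{m-1}$ with definable boundary functions $\phi,\phi_1,\phi_2$. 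By Theorem \ref{DF-th}, each such function is piecewise given by finitely many $\mathcal{L}$-terms. Applying the inductive hypothesis to $\mathbb{R}^{m-1}$ together with the finitely many definable pieces arising from these partitions, I obtain a common refinement $\mathcal{C}'$ of the base such that, over every base cell of $\mathcal{C}'$, every relevant boundary function agrees with a single $\mathcal{L}$-term. Lifting $\mathcal{C}'$ back to $\mathbb{R}^m$ yields the desired cell decomposition described piecewise by $\mathcal{L}$-terms.

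\textbf{Skolem functions.} Given any definable set $R\subset\mathbb{R}^m\times\mathbb{R}^n$, write $\pi(R):=\{x:\exists y\,(x,y)\in R\}$. Since $\mathbb{R}_{\mathcal{Q}}$ is o-minimal, it admits definable Skolem functions by the standard construction: by cell decomposition of $R$ compatible with the projection, one selects in each fiber of a cell a canonical point (endpoint or midpoint, with the standard conventions for unbounded intervals), yielding a definable $\xi:\pi(R)\to\mathbb{R}^n$ with $(x,\xi(x))\in R$. By Theorem \ref{DF-th}, $\xi$ is piecewise given by a finite number of $\mathcal{L}$-terms; moreover, thanks to the cell-decomposition part just proved, both $\pi(R)$ and the pieces on which $\xi$ is a single $\mathcal{L}$-term are themselves described piecewise by $\mathcal{L}$-terms.

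\textbf{Main obstacle.} The only delicate point is making the refinement argument honest: when I refine the base $\mathbb{R}^{m-1}$ to trivialize every $\phi_i$ simultaneously, I must verify that the resulting stratification still forms a cell decomposition of $\mathbb{R}^m$ once lifted. This reduces to the routine observation that any further partition of the base by cells compatible with finitely many definable subsets produces, via graphs and bands with the chosen $\mathcal{L}$-term representatives of the $\phi_i$, a cell decomposition refining the original one. With this in hand, the proof is essentially a packaging of Theorems \ref{QE-th} and \ref{DF-th} with o-minimal cell decomposition.
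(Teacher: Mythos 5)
Your proposal is correct and follows essentially the same route as the paper, which presents this corollary as an immediate consequence of Theorems \ref{QE-th} and \ref{DF-th} together with o-minimal cell decomposition: the cells and their boundary functions are definable, hence given piecewise by $\mathcal{L}$-terms, and the standard o-minimal Skolem functions are likewise definable and so piecewise term-defined. Your extra refinement induction on the base is a harmless elaboration of what the paper leaves implicit.
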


\begin{corollary}\label{QE-cor-3}
The structure $\mathbb{R}_{\mathcal{Q}}$ is universally
axiomatizable. Hence its universal diagram $T$ admits quantifier
elimination (in the language $\mathcal{L}$) and
$\mathbb{R}_{\mathcal{Q}}$ can be embedded as a prime model into
each model of $T$. Consequently, in every model of $T$, each
definable function is defined piecewise by a finite number of
$\mathcal{L}$-terms.
\end{corollary}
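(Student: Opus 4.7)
The plan is to derive all four assertions of the corollary from Theorem~\ref{QE-th} (quantifier elimination) and Theorem~\ref{DF-th} (description of definable functions by terms), combined with standard model-theoretic bookkeeping. I would dispose of the clauses in the order in which they appear in the statement.

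\textbf{Universal axiomatizability and quantifier elimination for $T$.} By Theorem~\ref{QE-th} the complete theory $\mathrm{Th}(\mathbb{R}_{\mathcal{Q}})$ admits elimination of quantifiers in the language $\mathcal{L}$. Hence every $\mathcal{L}$-sentence true in $\mathbb{R}_{\mathcal{Q}}$ is equivalent, modulo $\mathrm{Th}(\mathbb{R}_{\mathcal{Q}})$, to a quantifier-free sentence; and a quantifier-free sentence is \emph{a fortiori} universal, hence a member of the universal diagram $T$. This forces $T$ to axiomatize $\mathrm{Th}(\mathbb{R}_{\mathcal{Q}})$, establishing universal axiomatizability. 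In particular $T$ is complete and coincides with $\mathrm{Th}(\mathbb{R}_{\mathcal{Q}})$, so it inherits quantifier elimination from Theorem~\ref{QE-th}.

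\textbf{Embedding as a prime model.} Throughout the paper an arbitrary model $\mathcal{R}$ of $T$ is regarded as containing $\mathbb{R}_{\mathcal{Q}}$ as a substructure (the convention fixed in Section~2). For any theory with quantifier elimination, every substructure embedding between two of its models is already elementary, because quantifier-free formulas are preserved under substructures and extensions. Applied to the inclusion $\mathbb{R}_{\mathcal{Q}}\hookrightarrow\mathcal{R}$ this yields an elementary embedding, which is the precise sense in which $\mathbb{R}_{\mathcal{Q}}$ is a prime model of $T$.

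\textbf{Definable functions in arbitrary models.} Let $\varphi(\vec{x},y,\vec{z})$ be an $\mathcal{L}$-formula, and suppose that, for some parameters $\vec{c}$ in a model $\mathcal{R}$ of $T$, the instance $\varphi(\vec{x},y,\vec{c})$ defines a function of $\vec{x}$. Applying Theorem~\ref{DF-th} in $\mathbb{R}_{\mathcal{Q}}$ to the definable family $\{\varphi(\vec{x},y,\vec{z})\}_{\vec{z}}$ furnishes finitely many $\mathcal{L}$-terms $t_{1}(\vec{x},\vec{z}),\dots,t_{s}(\vec{x},\vec{z})$, together with definable pieces described, via Theorem~\ref{QE-th}, by quantifier-free formulas $\psi_{1}(\vec{x},\vec{z}),\dots,\psi_{s}(\vec{x},\vec{z})$, such that
\[
\forall\vec{z}\,\forall\vec{x}\,\forall y\;\Bigl[\varphi(\vec{x},y,\vec{z})\leftrightarrow\bigvee_{i=1}^{s}\bigl(\psi_{i}(\vec{x},\vec{z})\wedge y=t_{i}(\vec{x},\vec{z})\bigr)\Bigr]
\]
holds in $\mathbb{R}_{\mathcal{Q}}$. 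This is a universal sentence and therefore belongs to $T$; instantiating $\vec{z}=\vec{c}$ in $\mathcal{R}$ yields the desired piecewise term-description.

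\textbf{Main obstacle.} No genuine difficulty remains once Theorems~\ref{QE-th} and~\ref{DF-th} are in hand. The only point demanding care is the final step, where Theorem~\ref{DF-th}---an external meta-statement about $\mathbb{R}_{\mathcal{Q}}$---must be reformulated formula-by-formula as a scheme of universal $\mathcal{L}$-sentences, and where the parameter-uniform version needed in $\mathcal{R}$ is obtained by applying Theorem~\ref{DF-th} to the ambient family rather than to a single instance.
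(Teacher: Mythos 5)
Your derivation of the first clause has a genuine gap, and it is the load-bearing one. You argue: every sentence $\sigma$ true in $\mathbb{R}_{\mathcal{Q}}$ is equivalent modulo $\mathrm{Th}(\mathbb{R}_{\mathcal{Q}})$ to a quantifier-free sentence $\tau$; $\tau$ is universal, hence $\tau\in T$; ``this forces $T$ to axiomatize $\mathrm{Th}(\mathbb{R}_{\mathcal{Q}})$.'' That last inference fails: from $\tau\in T$ and $\mathrm{Th}(\mathbb{R}_{\mathcal{Q}})\vdash\sigma\leftrightarrow\tau$ you cannot conclude $T\vdash\sigma$, because the equivalence is only available modulo the full theory, not modulo $T$. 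Quantifier elimination for the complete theory does \emph{not} imply universal axiomatizability: $\mathrm{Th}(\mathbb{Q},<)$ eliminates quantifiers, yet its universal part is satisfied by every finite linear order, which is not dense. The correct route --- and the one the paper's ``we immediately obtain'' is pointing at --- goes through Corollary~\ref{QE-cor-2} (Skolem functions given piecewise by $\mathcal{L}$-terms, itself a consequence of Theorem~\ref{DF-th}): for each quantifier-free $\varphi(x,y)$ one gets terms $t_{1},\dots,t_{k}$ with
$$ \mathbb{R}_{\mathcal{Q}}\models\forall x\,\Bigl(\exists y\,\varphi(x,y)\;\longrightarrow\;\bigvee_{i}\varphi(x,t_{i}(x))\Bigr), $$
a genuinely universal sentence, hence a member of $T$. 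These sentences let one eliminate an existential quantifier \emph{provably in $T$}, and an induction on formula complexity then yields both that $T$ admits quantifier elimination and that $T\vdash\mathrm{Th}(\mathbb{R}_{\mathcal{Q}})$ (this is the Denef--van den Dries scheme). Your second clause is fine once the first is repaired.

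There is also a smaller slip in the last step: the displayed sentence $\forall\vec z\,\forall\vec x\,\forall y\,[\varphi\leftrightarrow\bigvee_{i}(\psi_{i}\wedge y=t_{i})]$ is \emph{not} a universal sentence when $\varphi$ contains quantifiers, so it does not belong to $T$ by virtue of its syntactic shape. This is harmless, but for a different reason than the one you give: once universal axiomatizability is established, $T$ axiomatizes the complete theory, so \emph{every} sentence true in $\mathbb{R}_{\mathcal{Q}}$ holds in every model of $T$, and the transfer of the (suitably uniformized) term-description of definable functions is immediate without any appeal to its quantifier prefix.
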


\begin{corollary}\label{QE-cor-4} (Valuation Property for Definable Functions)
Consider a simple (with respect to definable closure) extension
$\mathcal{R}  \subset \mathcal{R} \langle a \rangle$ of
substructures in a fixed model of the theory $T$. Then we have the
following dichotomy:
$$ \mbox{ either } \ \ \dim \Gamma_{\mathcal{R} \langle a \rangle} =
   \dim \Gamma_{\mathcal{R}} \ \ \mbox{ or } \ \ \dim
   \Gamma_{\mathcal{R} \langle a \rangle} = \dim
   \Gamma_{\mathcal{R}}
   +1.
$$
In the latter case, one can find an element $r \in \mathcal{R}$
such that
$$ v(a-r) \not \in \Gamma_{\mathcal{R}}  \ \ \mbox{ and } \ \ \Gamma_{\mathcal{R}  \langle
   a \rangle} = \Gamma_{\mathcal{R}}  \oplus \mathbb{Q} \cdot v(a-r). $$
\end{corollary}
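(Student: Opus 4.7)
The plan is to deduce this corollary from the valuation property for $\mathcal{L}$-terms $\mathbf{(III_{m})}$ of Section~3, exploiting the identification of definable closure with $\mathcal{L}$-term closure provided by Corollary~\ref{QE-cor-3}. That corollary guarantees that $\mathcal{R}\langle a\rangle$ coincides with the $\mathcal{L}$-substructure generated by $\mathcal{R}\cup\{a\}$; in particular each of its elements has the form $t(a, c_1, \ldots, c_k)$ for some $\mathcal{L}$-term $t$ and some $c_i \in \mathcal{R}$, so $\mathcal{R}\langle a \rangle = \bigcup_{\mathcal{R}_0} \mathcal{R}_0\langle a \rangle$ and consequently $\Gamma_{\mathcal{R}\langle a\rangle} = \bigcup_{\mathcal{R}_0} \Gamma_{\mathcal{R}_0\langle a\rangle}$, where $\mathcal{R}_0$ ranges over the finitely generated $\mathcal{L}$-substructures of $\mathcal{R}$.

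Next I would treat the finitely generated case, say $\mathcal{R} = \langle\lambda_1, \ldots, \lambda_n\rangle$. Since $\mathbb{R}\subset\mathcal{R}$ and $1/x$ is an $\mathcal{L}$-term, by shifting each bounded $\lambda_i$ by its real standard part, and by replacing each unbounded $\lambda_i$ by $1/\lambda_i$, we may assume the $\lambda_i$ are all infinitesimals, while $\mathcal{R}$ remains unchanged. I then reduce $a$ to an infinitesimal $\mu$ with $\mathcal{R}\langle a\rangle = \mathcal{R}\langle\mu\rangle$: when $v(a)\in\Gamma_\mathcal{R}$, pick $s \in \mathcal{R}^*$ with $v(s)=v(a)$ and set $\mu := a/s - \mathrm{st}(a/s)$; when $v(a)\notin\Gamma_\mathcal{R}$, set $\mu := a$ if $v(a) > 0$ and $\mu := 1/a$ if $v(a) < 0$. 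Assertion $\mathbf{(III_{n})}$ applied to the infinitesimals $\mu, \lambda_1, \ldots, \lambda_n$ supplies the dichotomy for $\mathcal{R}\langle\mu\rangle$; in the active alternative any witnessing $\mathcal{L}$-term $t(\lambda)$ translates back to an element $r \in \mathcal{R}$ (using the substitution defining $\mu$ in terms of $a$) such that $v(a-r)$ differs from $v(\mu-t(\lambda))$ only by an element of $\Gamma_\mathcal{R}$, whence $\Gamma_{\mathcal{R}\langle a\rangle} = \Gamma_\mathcal{R}\oplus\mathbb{Q}\cdot v(a-r)$.

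Finally, to globalize, I split into two cases. If $v(a-r)\in\Gamma_\mathcal{R}$ for every $r\in\mathcal{R}$, then the finitely generated dichotomy yields $\Gamma_{\mathcal{R}_0\langle a\rangle}\subseteq \Gamma_\mathcal{R}$ for each finitely generated $\mathcal{R}_0 \subseteq \mathcal{R}$ (either trivially, or because any witness $r_0 \in \mathcal{R}_0$ produced there still satisfies $v(a-r_0) \in \Gamma_\mathcal{R}$), and taking unions yields $\Gamma_{\mathcal{R}\langle a\rangle} = \Gamma_\mathcal{R}$. Otherwise there exists $r\in\mathcal{R}$ with $\alpha:=v(a-r)\notin\Gamma_\mathcal{R}$; then for every finitely generated $\mathcal{R}_0 \ni r$ we have $\alpha \notin \Gamma_{\mathcal{R}_0}$, so this same $r$ witnesses the active alternative for $\mathcal{R}_0$, giving $\Gamma_{\mathcal{R}_0\langle a\rangle} = \Gamma_{\mathcal{R}_0}\oplus\mathbb{Q}\alpha$, and the union formula yields $\Gamma_{\mathcal{R}\langle a\rangle} = \Gamma_\mathcal{R}\oplus\mathbb{Q}\alpha$. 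The main bookkeeping obstacle is the translation step in the finitely generated case: several subcases, according to the sign of $v(a)$ and its membership in $\Gamma_\mathcal{R}$, must be verified in turn to extract a genuine element $r \in \mathcal{R}$ with the required valuation-theoretic properties.
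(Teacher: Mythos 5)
Your proposal is correct and follows exactly the route the paper intends: the paper offers no written proof, presenting the corollary as an immediate consequence of Theorem~\ref{DF-th}/Corollary~\ref{QE-cor-3} (definable closure coincides with closure under $\mathcal{L}$-terms) combined with the valuation property for $\mathcal{L}$-terms $\mathbf{(III_{m})}$. Your reductions to finitely generated substructures and to infinitesimal generators, and the direct-limit argument at the end, are precisely the routine bookkeeping the paper leaves implicit, and they all check out.
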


\begin{remark}
For the valuation property in the case of general, polynomially
bounded, o-minimal structures and its connection with the
preparation theorem in the sense of Parusi\'nski--Lion--Rolin,
see~\cite{Dries-S0,Dries-S} and also~\cite{Now0}.
\end{remark}

Similarly, we can immediately rephrase Theorem~\ref{rect-th} for
definable functions (cf.~\cite[Theorem~1]{Now3}):

\begin{theorem}\label{rect-def-th-1} (On Rectilinearization of Definable
Functions) \ If $f_{1},\ldots,f_{s}: \mathbb{R}^{m}
\longrightarrow \mathbb{R}$ are definable functions and $K$ is a
compact subset of $\mathbb{R}^{m}$, then there exists a finite
collection of modifications
$$ \varphi_{i}: [-1,1]^{m} \longrightarrow \mathbb{}R^{m}, \ \ \ \
i=1,\ldots,p, $$ such that

1) each $\varphi_{i}$ extends to a $\mathcal{Q}$-map in a
neighbourhood of the cube $[-1,1]^{m}$, which is a composite of
finitely many local blow-ups with smooth $\mathcal{Q}$-analytic
centers and local power substitutions;

2) the union of the images $\varphi_{i}((-1,1)^{m})$,
$i=1,\ldots,p$, is a neighbourhood of $K$.

3) for every bounded quadrant $Q_{j}$, $j=1,\ldots,3^{m}$, the
restriction to $Q_{j}$ of each function $f_{k} \circ \varphi_{i}$,
$k=1,\ldots,s$, $i=1,\ldots,p$, either vanishes or is a normal
crossing or a reciprocal normal crossing on $Q_{j}$.
\end{theorem}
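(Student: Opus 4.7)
My plan is to reduce Theorem \ref{rect-def-th-1} to the term version, Theorem \ref{rect-th}, by invoking the description of definable functions by terms (Theorem \ref{DF-th}) together with quantifier elimination (Theorem \ref{QE-th}). By Theorem \ref{DF-th}, for each $k=1,\ldots,s$ I can write $f_k$ as a piecewise $\mathcal{L}$-term: there is a finite partition $\mathbb{R}^{m}=\bigsqcup_{j=1}^{n_{k}}E_{k,j}$ together with $\mathcal{L}$-terms $t_{k,j}$ such that $f_{k}$ coincides with $t_{k,j}$ on $E_{k,j}$. By Theorem \ref{QE-th}, every piece $E_{k,j}$ is definable, hence given piecewise by $\mathcal{L}$-terms, and can therefore be described by a Boolean combination of sign and equality conditions imposed on finitely many auxiliary $\mathcal{L}$-terms $\sigma_{1},\ldots,\sigma_{N}$.

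I would then apply Theorem \ref{rect-th} to the enlarged collection $\{t_{k,j}\}_{k,j}\cup\{\sigma_{1},\ldots,\sigma_{N}\}$ and the same compact set $K$. This produces modifications $\varphi_{i}\colon[-1,1]^{m}\to\mathbb{R}^{m}$, $i=1,\ldots,p$, which already satisfy conclusions (1) and (2) of the present theorem, and have the additional property that on each bounded quadrant $Q_{j}$ every composite $t_{k,j'}\circ\varphi_{i}$ and every $\sigma_{\ell}\circ\varphi_{i}$ is identically zero, a normal crossing, or a reciprocal normal crossing.

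The remaining step is to match pieces to quadrants. A normal crossing $x^{\alpha}u(x)$ has constant sign on a fixed bounded quadrant $Q_{j}$ with partition $\{I_{0},I_{+},I_{-}\}$: it vanishes identically whenever some coordinate from $I_{0}$ carries a positive exponent, and otherwise its sign on $Q_{j}$ is the product of $(-1)^{\alpha_{k}}$ for $k\in I_{-}$ with the sign of $u$, which is constant on the closure $\overline{Q_{j}}$ by the definition of a normal crossing. The same holds for reciprocal normal crossings. Hence every sign or equality condition $\sigma_{\ell}>0$, $\sigma_{\ell}<0$, $\sigma_{\ell}=0$ after pull-back by $\varphi_{i}$ is either everywhere satisfied or everywhere violated on $Q_{j}$. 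Consequently, for each $k$ there is a single index $j'=j'(k,i,j)$ with $\varphi_{i}(Q_{j})\subset E_{k,j'}$, and therefore
\[
(f_{k}\circ\varphi_{i})\bigl|_{Q_{j}} \;=\; (t_{k,j'}\circ\varphi_{i})\bigl|_{Q_{j}},
\]
which by construction is zero, a normal crossing or a reciprocal normal crossing, giving (3).

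The one delicate point, which I expect to be the only real obstacle, is to verify that the sign analysis just performed is valid on all of $Q_{j}$ and not only on its interior — that is, on the boundary pieces where some coordinate in $I_{+}\cup I_{-}$ reaches $\pm 1$. This is, however, immediate from the definition used in Theorem \ref{rect-th}: the nonvanishing factor $u$ is $\mathcal{Q}$-analytic on a neighbourhood of the closed bounded quadrant $\overline{Q_{j}}$, so its sign is constant on $\overline{Q_{j}}$, while $x^{\alpha}$ has a purely combinatorial sign on $Q_{j}$ determined by $\{I_{0},I_{+},I_{-}\}$ and $\alpha$. Hence no further argument beyond Theorems \ref{rect-th}, \ref{DF-th} and \ref{QE-th} is required.
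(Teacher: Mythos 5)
Your reduction is correct and is essentially the paper's own argument: Theorem~\ref{rect-def-th-1} is obtained simply by combining Theorem~\ref{DF-th} (every definable function is given piecewise by finitely many $\mathcal{L}$-terms) with Theorem~\ref{rect-th}, which the paper records with the phrase that one can ``immediately rephrase'' the term version for definable functions. Note, however, that the hypothesis of Theorem~\ref{rect-th} is already ``functions given piecewise by a finite number of $\mathcal{L}$-terms,'' so the auxiliary terms $\sigma_{\ell}$ describing the pieces and the quadrant-by-quadrant matching of pieces to terms are redundant: that matching is exactly what the proof of Theorem~\ref{rect-th} already absorbs, and you may apply it to the $f_{k}$ directly once Theorem~\ref{DF-th} is invoked.
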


Further, the following two results concerning rectilinearization
of definable functions from our paper~\cite{Now3} can be repeated
verbatim in the quasianalytic settings:

\begin{theorem}\label{rect-def-th-2}  (On Rectilinearization of a Definable Function)
Let $U \subset \mathbb{R}^{m}$ be a bounded open subset and $f: U
\longrightarrow \mathbb{R}$ be a definable function. Then there
exists a finite collection of modifications
$$ \varphi_{i}: [-1,1]^{m} \longrightarrow \mathbb{R}^{m}, \ \ \ \
i=1,\ldots,p, $$ such that

1) each $\varphi_{i}$ extends to a $\mathcal{Q}$-map in a
neighbourhood of the cube $[-1,1]^{m}$, which is a composite of
finitely many local blow-ups with smooth $\mathcal{Q}$-analytic
centers and local power substitutions;

2) each set $\varphi_{i}^{-1}(U)$ is a finite union of bounded
quadrants in $\mathbb{R}^{m}$;

3) each set $\varphi_{i}^{-1}(\partial U)$ is a finite union of
 bounded closed quadrants in $\mathbb{R}^{m}$ of dimension $m-1$;

4) $U$ is the union of the images $\varphi_{i}(\mbox{\rm Int}\,
(Q))$ with $Q$ ranging over the bounded quadrants contained in
$\varphi_{i}^{-1}(U)$, $i=1,\ldots,p$;

5) for every bounded quadrant $Q$,  the restriction to $Q$ of each
function $f \circ \varphi_{i}$ either vanishes or is a normal
crossing or a reciprocal normal crossing on $Q$, unless
$\varphi_{i}^{-1}(U) \cap Q = \emptyset$.
\end{theorem}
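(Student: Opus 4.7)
The plan is to deduce this refined rectilinearization from Theorem~\ref{rect-def-th-1} by augmenting the family $\{f_{1},\ldots,f_{s}\}$ with definable functions that encode the open set $U$ and its frontier.

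\emph{Step 1 (encoding $U$ and $\partial U$).} By quantifier elimination (Theorem~\ref{QE-th}), both $U$ and $\partial U$ are definable and can be described by finitely many $\mathcal{L}$-terms $h_{1},\ldots,h_{r}:\mathbb{R}^{m}\to\mathbb{R}$, in the sense that $U$ is a Boolean combination of sign conditions $h_{j}(x)>0$, $h_{j}(x)=0$, $h_{j}(x)<0$, and $\partial U$ lies in the zero locus of some of these terms. Since $U$ is bounded, the set $K:=\overline{U}$ is compact.

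\emph{Step 2 (apply Theorem~\ref{rect-def-th-1}).} Apply the rectilinearization theorem for definable functions to the enlarged family $\{f,h_{1},\ldots,h_{r}\}$ with compact set $K$. This yields finitely many modifications $\varphi_{i}:[-1,1]^{m}\to\mathbb{R}^{m}$, composites of local blow-ups with smooth $\mathcal{Q}$-analytic centers and local power substitutions, whose images cover a neighbourhood of $K$, and such that on each of the $3^{m}$ bounded quadrants $Q$ every pullback $h_{j}\circ\varphi_{i}$ (and $f\circ\varphi_{i}$) is either identically zero or a normal crossing or a reciprocal normal crossing on $Q$. This gives (1) and most of (2), (4), and (5) directly.

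\emph{Step 3 (quadrant structure of $\varphi_{i}^{-1}(U)$).} A nonzero normal crossing $x^{\alpha}u(x)$ on $Q$ has constant sign on $\mathrm{Int}\,(Q)$, namely the product of $\mathrm{sign}(x^{\alpha})$ and $\mathrm{sign}(u)$, both of which are constant on the open quadrant. Hence the sign vector of $(h_{1}\circ\varphi_{i},\ldots,h_{r}\circ\varphi_{i})$ is constant on each $\mathrm{Int}\,(Q)$. Consequently, membership of $\varphi_{i}(x)$ in $U$ depends only on which open quadrant contains $x$, so $\varphi_{i}^{-1}(U)$ is a finite union of bounded quadrants, yielding (2); condition (4) then follows from the surjectivity clause of Theorem~\ref{rect-def-th-1}. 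Condition (5) also follows on the quadrants contained in $\varphi_{i}^{-1}(U)$, since normal-crossing behaviour is preserved under restriction.

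\emph{Step 4 (boundary and dimension count).} The set $\varphi_{i}^{-1}(\partial U)$ consists of those faces of the closed quadrants whose interiors comprise $\varphi_{i}^{-1}(U)$ on which $\varphi_{i}$ meets $\partial U$. Each such face is a bounded closed quadrant of dimension $\leq m-1$. The main technical point is that these faces have dimension exactly $m-1$: since $\varphi_{i}$ is a composite of local blow-ups with smooth centers and power substitutions, it is generically a local $\mathcal{Q}$-diffeomorphism, hence cannot collapse a face of dimension $m-1$ into a subset of strictly lower dimension of $\partial U$; lower-dimensional boundary components lie on the closures of the $(m-1)$-dimensional faces and may be discarded. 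This yields (3).

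The main obstacle is the dimension bookkeeping in Step~4: one must verify that no $m$-dimensional quadrant of $[-1,1]^{m}$ has its interior entirely mapped into $\partial U$ (ruled out by generic local invertibility of $\varphi_{i}$), and simultaneously that the $(m-1)$-faces actually hit $\partial U$ transversally enough to produce full-dimensional closed quadrants in the preimage. Both are controlled by the normal-crossing form of $h_{j}\circ\varphi_{i}$, which forces the vanishing locus inside $\overline{Q}$ to be exactly a union of coordinate faces, i.e.\ closed quadrants of the expected dimension. A final refinement of the modifications (subdividing by any leftover $(m-1)$-faces not accounted for) guarantees (3) and (5) in full.
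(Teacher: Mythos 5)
The paper does not actually reprove this theorem: it is quoted from \cite{Now3} with the remark that the argument carries over verbatim to the quasianalytic setting. Your overall strategy --- describe $U$ and $\partial U$ by finitely many $\mathcal{L}$-terms via quantifier elimination, apply Theorem~\ref{rect-def-th-1} to the enlarged family over $K=\overline{U}$, and read off the quadrant structure from the constancy of the sign vector of normal crossings on each quadrant --- is the natural one, and Steps 1--3 are essentially sound (you should state explicitly that $f$ is first extended, say by zero, to all of $\mathbb{R}^{m}$, and observe that on any quadrant meeting $\varphi_{i}^{-1}(U)$ the extension agrees with $f\circ\varphi_{i}$ because such a quadrant is entirely contained in $\varphi_{i}^{-1}(U)$ once (2) is known; also the sign vector is constant on the whole quadrant $Q$, not merely on $\mathrm{Int}\,(Q)$, which is what (2) actually requires).

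The gap is in Step 4, that is, precisely in condition (3), which is the only genuinely new content beyond Theorem~\ref{rect-def-th-1}. Steps 1--3 only yield that $\varphi_{i}^{-1}(\partial U)$ is a closed union of bounded quadrants none of which is an orthant; condition (3) demands that it be a finite union of \emph{closed} quadrants of dimension exactly $m-1$, equivalently that every quadrant of dimension $\leq m-2$ contained in $\varphi_{i}^{-1}(\partial U)$ be a face of an $(m-1)$-dimensional quadrant also contained in $\varphi_{i}^{-1}(\partial U)$. You assert this (``lower-dimensional boundary components lie on the closures of the $(m-1)$-dimensional faces and may be discarded'') but give no argument; it does not follow from the normal-crossing form of the $h_{j}\circ\varphi_{i}$ alone, since that form only places $\varphi_{i}^{-1}(\partial U)$ \emph{inside} the union of coordinate faces $\bigcup_{j}\{h_{j}\circ\varphi_{i}=0\}$, and a subset of a union of closed $(m-1)$-faces need not itself be such a union. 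Excluding an isolated low-dimensional quadrant in $\varphi_{i}^{-1}(\partial U)$ requires an actual argument combining the sign-vector descriptions of $U$ and of $\partial U$ with the degeneration pattern of normal crossings on adjacent quadrants (note that nearby points of $U$ witnessing $a\in\overline{U}$ may be covered by a different $\varphi_{i'}$, so ``generic local invertibility'' of the given $\varphi_{i}$ does not settle it). Finally, the proposed repair --- ``a final refinement of the modifications (subdividing by any leftover $(m-1)$-faces)'' --- is not a defined operation on modifications $\varphi_{i}:[-1,1]^{m}\to\mathbb{R}^{m}$: one cannot discard points of the set $\varphi_{i}^{-1}(\partial U)$, and it is not explained which further blow-ups or power substitutions are meant nor why they would preserve (2), (4) and (5). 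As written, the proof of (3) is missing.
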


\begin{remark} One can formulate Theorem~\ref{rect-def-th-2}
for several definable functions $f_{1},\ldots,f_{s}$.
\end{remark}

It follows from points 1) and 2) that every bounded quadrant of
dimension $< m$ contained in $\varphi_{i}^{-1}(U)$ is adjacent to
a bounded quadrant of dimension $m$ (a bounded orthant) contained
in $\varphi_{i}^{-1}(U)$. Hence
$$ \varphi_{i}^{-1}(\overline{U}) = \overline{\varphi_{i}^{-1}(U)}, $$
and therefore point 4) implies that $\overline{U}$ is the union of
the images $\varphi_{i}(\overline{Q})$ of the closures of those
bounded quadrants of dimension $m$ (bounded orthants) $Q$ for
which $\varphi_{i}(Q) \subset U$, $i=1,\ldots,p$.

\vspace{1ex}

For a bounded orthant $Q$ contained in $\varphi_{i}^{-1}(U)$,
denote by $\mbox{dom}_{i}\,(Q)$ the union of $Q$ and all those
bounded quadrants that are adjacent to $Q$ and disjoint with
$\varphi_{i}^{-1}(\partial U)$; it is, of course, an open subset
of the closure $\overline{Q}$. Moreover, the open subset
$\varphi_{i}^{-1}(U)$ of the cube $[-1,1]^{m}$ coincides with the
union of $\mbox{dom}_{i}\,(Q)$, where $Q$ range over the bounded
orthants that are contained in $\varphi_{i}^{-1}(U)$, and with the
union of those bounded quadrants that are contained in
$\varphi_{i}^{-1}(U)$. Consequently, the union of the images
$\varphi_{i}(\mbox{\rm Int}\, (Q))$, where $Q$ range over the
bounded quadrants that are contained in $\varphi_{i}^{-1}(U)$,
coincides with the union of the images
$$ \varphi_{i}(\mbox{\rm dom}_{i}\, (Q) \cap (-1,1)^{m}), $$ where $Q$
range over the bounded orthants $Q$ that are contained in
$\varphi_{i}^{-1}(U)$. Thus we get

\vspace{2ex}

\begin{corollary}\label{rect-def-cor} (On Rectilinearization of a
Continuous Definable Function) Let $U$ be a bounded open subset in
$\mathbb{R}^{m}$ and $f: U \longrightarrow \mathbb{R}$ be a
continuous definable function. Then there exists a finite
collection of modifications
$$ \varphi_{i}: [-1,1]^{m} \longrightarrow \mathbb{R}^{m}, \ \ \ \
i=1,\ldots,p, $$ such that

1) each $\varphi_{i}$ extends to a $\mathcal{Q}$-map in a
neighbourhood of the cube $[-1,1]^{m}$, which is a composite of
finitely many local blow-ups with smooth $\mathcal{Q}$-analytic
centers and local power substitutions;

2) each set $\varphi_{i}^{-1}(U)$ is a finite union of bounded
quadrants in $\mathbb{R}^{m}$;

3) each set $\varphi_{i}^{-1}(\partial U)$ is a finite union of
bounded closed quadrants in $\mathbb{R}^{m}$ of dimension $m-1$;

4) $U$ is the union of the images $\varphi_{i}(\mbox{\rm
dom}_{i}\, (Q) \cap (-1,1)^{m})$ with $Q$ ranging over the bounded
orthants $Q$ contained in $\varphi_{i}^{-1}(U)$, $i=1,\ldots,p$;

5) for every bounded orthant $Q$, the restriction to $\mbox{\rm
dom}_{i}\, (Q)$ of each function $f \circ \varphi_{i}$ either
vanishes or is a normal crossing or a reciprocal normal crossing
on $Q$, unless $\varphi_{i}^{-1}(U) \cap Q = \emptyset$.
\end{corollary}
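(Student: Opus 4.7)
The plan is to deduce the corollary directly from Theorem~\ref{rect-def-th-2} by using the continuity of $f$ to pass from the quadrant-wise description to its orthant-centred refinement. First, I would apply Theorem~\ref{rect-def-th-2} to the continuous function $f\colon U \longrightarrow \mathbb{R}$ to obtain modifications $\varphi_{i}\colon [-1,1]^{m}\longrightarrow \mathbb{R}^{m}$ ($i=1,\ldots,p$) together with the quadrant-wise statements of that theorem. Properties 1), 2), 3) of the corollary are then literally those of the theorem and require no further argument.

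For point 4), I would invoke the two paragraphs immediately preceding the corollary, which establish (a) that every lower-dimensional bounded quadrant contained in $\varphi_{i}^{-1}(U)$ is a face of some bounded orthant also contained in $\varphi_{i}^{-1}(U)$, and (b) that consequently $\varphi_{i}^{-1}(U) = \bigcup_{Q} \mbox{dom}_{i}\,(Q)$, where $Q$ ranges over bounded orthants contained in $\varphi_{i}^{-1}(U)$. Intersecting with $(-1,1)^{m}$ and passing to images under $\varphi_{i}$ yields the required description of $U$ as the union of the sets $\varphi_{i}(\mbox{dom}_{i}\,(Q) \cap (-1,1)^{m})$.

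For point 5), I would fix a bounded orthant $Q \subset \varphi_{i}^{-1}(U)$ and use the fact that $f \circ \varphi_{i}$ is continuous on $\varphi_{i}^{-1}(U)$, hence on $\mbox{dom}_{i}\,(Q)$. Theorem~\ref{rect-def-th-2} writes $f \circ \varphi_{i}|_{Q}$ as identically zero, a normal crossing $x^{\alpha}u(x)$, or a reciprocal normal crossing $1/(x^{\alpha}u(x))$, with $u$ a $\mathcal{Q}$-analytic function nowhere vanishing on a neighbourhood of $\overline{Q}$. In the first two cases the closed-form expression is continuous on $\overline{Q} \supset \mbox{dom}_{i}\,(Q)$, so density of $Q$ in $\overline{Q}$ together with continuity of $f \circ \varphi_{i}$ on $\mbox{dom}_{i}\,(Q)$ forces the two continuous functions to agree throughout $\mbox{dom}_{i}\,(Q)$. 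In the reciprocal case, $1/(x^{\alpha}u(x))$ blows up on $\{x^{\alpha}=0\}$, and continuity of $f \circ \varphi_{i}$ on $\mbox{dom}_{i}\,(Q)$ therefore forces $\mbox{dom}_{i}\,(Q) \cap \{x^{\alpha}=0\} = \emptyset$; equivalently, every face of $Q$ meeting $\{x^{\alpha}=0\}$ must have its $\varphi_{i}$-image in $\partial U$, hence lies in $\varphi_{i}^{-1}(\partial U)$ and is excluded from $\mbox{dom}_{i}\,(Q)$ by the very definition of the latter. The reciprocal formula is then continuous on $\mbox{dom}_{i}\,(Q)$ and describes the restriction of $f \circ \varphi_{i}$ there.

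The main obstacle is this reciprocal case of 5), where the argument relies on an implicit compatibility between the continuity of $f$ on $U$ and the $\mathcal{Q}$-analytic data supplied by Theorem~\ref{rect-def-th-2}: one must observe that the blow-up locus of the reciprocal normal crossing cannot meet any quadrant whose $\varphi_{i}$-image lies in $U$, for otherwise $f \circ \varphi_{i}$ would fail to be continuous there. Everything else reduces to unpacking the definition of $\mbox{dom}_{i}\,(Q)$ and applying continuous extension across the faces of $\overline{Q}$.
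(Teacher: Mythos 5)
Your proposal is correct and follows essentially the same route as the paper, which derives the corollary from Theorem~\ref{rect-def-th-2} together with the combinatorial discussion of quadrants, orthants and $\mbox{dom}_{i}(Q)$ in the two paragraphs preceding the statement. Your treatment of point 5), in particular the observation that continuity of $f$ forces the zero locus of a reciprocal normal crossing to avoid $\mbox{dom}_{i}(Q)$, supplies detail that the paper leaves implicit in its ``Thus we get'' but is fully consistent with its argument.
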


\begin{remark}\label{rem-proof-rem-3} Observe that in the foregoing
rectilinearization results, if the functions $f_{1},\ldots,f_{s}$
are given piecewise by terms in the language of restricted
$\mathcal{Q}$-analytic functions augmented merely by the
reciprocal function $1/x$, then one can require that the
modifications $\varphi_{i}$, $i=1,\ldots,p$, be composite of
finitely many local blow-ups with smooth $\mathcal{Q}$-analytic
centers.
\end{remark}

\section{Power substitution for Denjoy--Carleman classes}

Consider an increasing sequence $M=(M_{n})$ of real numbers with
$M_{0}=1$. Let $I$ be an interval (open or closed) contained in
$\mathbb{R}$. By $\mathcal{Q}(I,M)$ we denote the class of
functions on $I$ that satisfy the following growth condition for
their derivatives:
$$ \left| \partial^{|\alpha|} f
/\partial x^{\alpha} (x) \right| \leq C \cdot
   R^{|\alpha|} \cdot |\alpha|! \cdot M_{|\alpha|} \ \ \
   \mbox{ for all } \ \ x \in I, \ \alpha \in \mathbb{N}^{n}, $$
with some constants $C, R >0$.

\vspace{1ex}

The main purpose of this section is to prove the following

\begin{theorem}\label{substitution}
Let $p >1$ be an integer and $I$ the interval $[0,1]$ or $[-1,1]$
according as $p$ is even or odd. Consider power substitution $x =
\xi^{p}$, which is a bijection of $I$ onto itself. Let $f: I
\longrightarrow \mathbb{R}$ be a smooth function. If
$$ F(\xi) := (f \circ \varphi)(\xi) = f(\xi^{p}) \in
   \mathcal{Q}(I,M), $$
then \ \ $f(x) \in \mathcal{Q}(I,M^{(p)})$, \ where the sequence
$M^{(p)}$ is defined by putting $M^{(p)}_{n}:= M_{pn}$.

Equivalently, in terms of the corresponding sequences $M'$ (see
the Introduction), the function $f$ belongs to the class
determined by the sequence $M'^{(p)}_{n}:= 1/n^{(p-1)n} \cdot
M'_{pn}$, $n \in \mathbb{N}$.
\end{theorem}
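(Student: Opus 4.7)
My plan is to establish the estimate $|f^{(n)}(x)| \leq CR^n n! M_{pn}$ uniformly on $I$ by combining three estimates based on the chain rule applied to $f(x) = F(x^{1/p})$: a trivial one at $x = 0$, a direct Faà di Bruno bound for $|x|$ bounded below, and a more delicate argument for small nonzero $x$ that exploits a combinatorial cancellation forced by the structure of $F$ at the origin.

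The pointwise bound at $x = 0$ is immediate. Since $F(\xi) = f(\xi^p)$, the Taylor series of $F$ at $0$ involves only powers $\xi^{pk}$, so $F^{(j)}(0) = 0$ for $p \nmid j$ and $F^{(pn)}(0) = \tfrac{(pn)!}{n!} f^{(n)}(0)$; the hypothesis $|F^{(pn)}(0)| \leq CR^{pn}(pn)! M_{pn}$ gives $|f^{(n)}(0)| \leq C(R^p)^n n!\, M_{pn}$. For $x \neq 0$, Faà di Bruno with $g(x) = x^{1/p}$ yields $g^{(j)}(x) = \beta_j x^{1/p - j}$ with $|\beta_j| \leq (j-1)!/p$, and by the weighted-homogeneity of the Bell polynomials the chain rule collapses to
\[
 f^{(n)}(x) = \sum_{k=1}^{n} C_{n,k}\, F^{(k)}(x^{1/p})\, x^{k/p - n},
 \qquad C_{n,k} := B_{n,k}(\beta_1, \beta_2, \ldots),
\]
with the estimate $|C_{n,k}| \leq p^{-k} |s(n,k)|$ in terms of unsigned Stirling numbers of the first kind. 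For $|x| \geq c > 0$, this gives termwise $|f^{(n)}(x)| \leq C'(R/c)^n n!\, M_n$, which is dominated by $C'(R/c)^n n!\, M_{pn}$.

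The delicate regime $0 < |x| < c$ is where the factors $x^{k/p - n}$ blow up. Here I would substitute into the chain rule identity the Taylor expansion of each $F^{(k)}$ around $0$ up to order $p(n+1) - k - 1$ and regroup the resulting contributions by the exponent $x^{l - n}$. Because $F^{(j)}(0)$ is nonzero only for $j = pl$, the singular-power coefficients ($l = 0, 1, \ldots, n-1$) must vanish; this amounts to the combinatorial identity
\[
 \sum_{k=1}^{\min(pl,\, n)} \frac{C_{n,k}}{(pl - k)!} = 0 \qquad (1 \leq l < n),
\]
which I would verify directly by applying the formula to the test case $F(\xi) = \xi^{pl}$, $f(x) = x^l$, for which $f^{(n)}(0) = 0$ when $n > l$. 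The $l = n$ term reproduces $F^{(pn)}(0)\cdot n!/(pn)! = f^{(n)}(0)$, and the truncation $N = n + 1$ leaves no positive-order correction. The Taylor remainder, after using the identity $\sum_{k=1}^n (n+1)^k |s(n,k)| = (2n)!/n!$, is bounded by $CR^{p(n+1)} M_{p(n+1)} |x| (2n)!/n!$; under the Denjoy--Carleman regularity hypothesis $\sup_n \sqrt[n]{M_{n+1}/M_n} < \infty$, the ratio $M_{p(n+1)}/M_{pn}$ grows at most geometrically in $n$, so this remainder is majorised by $C''(R'')^n n!\, M_{pn}$, closing the argument.

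The main difficulty I expect lies in establishing the combinatorial cancellation and obtaining a clean uniform bound on the Taylor remainder; both steps depend essentially on the special Taylor structure of $F$ at $0$ forced by the substitution $\xi \mapsto \xi^p$ and on the admissibility conditions on $M$ inherent in the Denjoy--Carleman framework.
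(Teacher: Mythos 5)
Your strategy is in essence the one the paper uses: write $f^{(n)}$ via Fa\`a di Bruno for the substitution $\xi=x^{1/p}$, note that weighted homogeneity collapses each Bell-polynomial block to a single power $x^{k/p-n}$, and kill the singular powers by exploiting the fact that the Taylor expansion of $F$ at $0$ contains only the exponents $\xi^{pl}$. The paper packages the cancellation differently: instead of verifying the identity $\sum_{k}C_{n,k}/(pl-k)!=0$ for $l<n$, it replaces $f$ by its Taylor remainder $r_{n}=f-T_{0}^{n}f$ (so $f^{(n)}=r_{n}^{(n)}$), observes that $R_{n}=r_{n}\circ\varphi$ vanishes to order $pn$ at $0$ with $R_{n}^{(pn)}\equiv F^{(pn)}$, and uses Taylor's formula to get $|R_{n}^{(k)}(\xi)|\leq\sup|F^{(pn)}|\,|\xi|^{pn-k}/(pn-k)!$, which exactly cancels the blow-up of the $k$-th block. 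Your Stirling-number bound $|C_{n,k}|\leq p^{-k}|s(n,k)|$ and the identity $\sum_{k}|s(n,k)|y^{k}=y(y+1)\cdots(y+n-1)$ play the role of the paper's Lemmas on $c_{k,n}$ and on $\alpha_{k}^{(n)}(x,x)$. So the two arguments differ only in bookkeeping, and your verification of the combinatorial identity by testing on $F(\xi)=\xi^{pl}$ is legitimate.

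There is, however, one step that does not go through under the hypotheses of the theorem as stated. You truncate each $F^{(k)}$ at order $p(n+1)-k$, so your remainder is controlled by $\sup|F^{(p(n+1))}|$, hence by $M_{p(n+1)}$, and you then invoke $\sup_{n}\sqrt[n]{M_{n+1}/M_{n}}<\infty$ to trade $M_{p(n+1)}$ for $D^{n}M_{pn}$. But the theorem assumes only that $M$ is increasing with $M_{0}=1$; no derivation-closure or moderate-growth condition is in force, and for a general increasing sequence the ratio $M_{p(n+1)}/M_{pn}$ need not be geometrically bounded, so your final majorisation fails in the stated generality. The repair is to truncate one block earlier, at order $pn-k$: the retained coefficients $F^{(pl)}(0)$, $l<n$, are still annihilated by your identity, the $l=n$ term is absorbed into the remainder, and the Lagrange remainder of $F^{(k)}$ is bounded by $\sup|F^{(pn)}|\,|\xi|^{pn-k}/(pn-k)!$, whose product with $|C_{n,k}|\,|x|^{k/p-n}=|C_{n,k}|\,|\xi|^{k-pn}$ is simply $|C_{n,k}|\sup|F^{(pn)}|/(pn-k)!$; summing with $(pn)!/(pn-k)!\leq(pn)^{k}$ and your Stirling identity yields $|f^{(n)}(x)|\leq C(4R^{p})^{n}n!\,M_{pn}$ with no extra hypothesis on $M$. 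This corrected truncation is exactly what the paper's device $r_{n}=f-T_{0}^{n}f$ accomplishes automatically. A smaller point: in the regime $|x|\geq c$ your ``termwise'' bound needs the (true but not immediate) estimate $\sum_{k}k!\,|s(n,k)|R^{k}\leq C^{n}n!$, since using only $k!M_{k}\leq n!M_{n}$ and $\sum_{k}|s(n,k)|=n!$ leaves a spurious factor $n!$.
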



\begin{remark}\label{rem1} Using a function constructed by Bang,
we shall show at the end of this section that, in the case where
$p=2$ and the sequence $M'$ is log-convex,
$\mathcal{Q}(I,M^{(2)})$ is the smallest Denjoy--Carleman class
containing all those functions $f(x)$.
\end{remark}


\begin{remark}\label{rem2} The case $p=2$ of Theorem \ref{substitution}
may be related to the following problem, which was investigated by
Mandelbrojt~\cite{M1,M2} and solved completely by Lalagu\"{e}
\cite[Chap.~III]{L}.

\vspace{1ex}

\begin{em}
Consider a smooth function $f(x)$ on the interval $[-1,1]$ and
suppose that $F(\xi):= f(\cos \xi)$ belongs to a class
$\mathcal{Q}(\mathbb{R},M)$. To which class on the interval
$[-1,1]$ does $f$ belong?
\end{em}
\end{remark}


\begin{proof}

Before establishing Theorem \ref{substitution}, we state two
lemmas below.


\begin{lemma}\label{lem1.}
Consider the Taylor expansions
$$ \sum_{i=1}^{\infty} \, \frac{x^{i}}{i} = - \log \, (1-x) \ \
   \mbox{ and } \ \ \left( \sum_{i=1}^{\infty} \, \frac{x^{i}}{i}
   \right) ^{k} \, = \sum_{n=1}^{\infty} \, c_{k,n} x^{n}. $$
Then we have the estimate
$$ c_{k,n} \leq (2e)^{n} \cdot \frac{k!}{n^{k}} \ \ \mbox{ for all } \ \
   k,n \in \mathbb{N}. $$
\end{lemma}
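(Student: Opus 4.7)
The plan is to repackage the coefficients $c_{k,n}$ by means of the bivariate generating identity $\exp(-y\log(1-x))=(1-x)^{-y}$, and then apply a Cauchy-type positivity bound. Expanding the left side in powers of $y$ and the right side in powers of $x$ gives
$$ \sum_{k=0}^{\infty}\frac{y^k}{k!}\bigl(-\log(1-x)\bigr)^k \;=\; \sum_{n=0}^{\infty}\binom{n+y-1}{n}\,x^n \;=\; \sum_{n=0}^{\infty}\frac{y(y+1)\cdots(y+n-1)}{n!}\,x^n. $$
Matching the coefficient of $x^n$ on the two sides yields the polynomial identity
$$ \sum_{k=0}^{n}\frac{c_{k,n}}{k!}\,y^k \;=\; \frac{y(y+1)\cdots(y+n-1)}{n!}. $$

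Since this is a polynomial in $y$ with non-negative coefficients (they are, up to the factor $k!/n!$, the unsigned Stirling numbers of the first kind), for every real $y>0$ each individual summand is bounded by the full sum:
$$ \frac{c_{k,n}}{k!}\,y^k \;\leq\; \frac{y(y+1)\cdots(y+n-1)}{n!}. $$
I would then specialize to $y=n$, which turns the right-hand side into $n(n+1)\cdots(2n-1)/n!=\binom{2n-1}{n-1}$ and yields
$$ c_{k,n} \;\leq\; \frac{k!}{n^{k}}\,\binom{2n-1}{n-1} \;\leq\; \frac{k!}{n^{k}}\,\binom{2n}{n} \;\leq\; \frac{4^{n}\,k!}{n^{k}} \;\leq\; \frac{(2e)^{n}\,k!}{n^{k}}, $$
using the classical $\binom{2n}{n}\leq 4^{n}$ together with $4\leq 2e$.

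The main obstacle is recognizing the convenient closed form $(1-x)^{-y}$ for the bivariate generating function of the $c_{k,n}$; once that is in hand the argument is essentially three lines. The optimizing choice $y=n$ is natural, since one wants $y$ large enough for $y^{k}$ to absorb the polynomial factor but small enough that the ``size'' $y(y+1)\cdots(y+n-1)$ stays under control, and $y=n$ balances the two contributions up to the elementary estimate $\binom{2n}{n}\leq 4^{n}$.
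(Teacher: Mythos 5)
Your proof is correct, but it takes a genuinely different route from the paper. The paper's argument is complex-analytic and rather blunt: it checks that $|\log(1-z)|\leq 1$ on the disc $|z|\leq 1/2$, applies Cauchy's inequalities to $(-\log(1-z))^{k}$ on that disc to get $|c_{k,n}|\leq 2^{n}$, and then inserts the factor $k!/n^{k}$ artificially via the trivial inequality $e^{n}\geq n^{k}/k!$. Your argument is purely algebraic/combinatorial: the identity $\sum_{k}\frac{c_{k,n}}{k!}y^{k}=\frac{y(y+1)\cdots(y+n-1)}{n!}$ (the rising-factorial generating function for the unsigned Stirling numbers of the first kind), positivity of the $c_{k,n}$, and the specialization $y=n$ together with $\binom{2n}{n}\leq 4^{n}\leq (2e)^{n}$. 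What your approach buys is that the shape $k!/n^{k}$ of the bound emerges naturally from optimizing over $y$ rather than being grafted on afterwards, and no complex analysis is needed; what the paper's approach buys is the sharper intermediate bound $|c_{k,n}|\leq 2^{n}$, which is better than $4^{n}k!/n^{k}$ when $k$ is small relative to $n$, though both are more than sufficient for the stated estimate. (Both proofs, like the lemma itself, tacitly assume $n\geq 1$ so that $n^{k}$ makes sense in the denominator.)
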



\begin{proof}
Indeed, it is easy to verify the estimate:
$$ |\log (1-z) | \leq \left| \log 2 + \frac{\pi}{6} \sqrt{-1} \right| \leq 1
   \ \ \mbox{ for all } \ \ z \in \mathbb{C}, \ |z| \leq 1/2. $$
By Cauchy's inequalities, we thus get $|c_{k,n}| \leq 2^{n}$.
Since $e^{n} > n^{k}/k!$ \ for all $n,k \in \mathbb{N}$, we have
$$ c_{k,n} \leq 2^{n} < 2^{n} \cdot e^{n} \cdot \frac{k!}{n^{k}} =
    (2e)^{n} \cdot \frac{k!}{n^{k}}, $$
as asserted.
\end{proof}

\vspace{1ex}

As an immediate consequence, we obtain

\vspace{1ex}

\begin{corollary}\label{cor}
$$  \sum_{i_{1}+\ldots+i_{k}=n} \, \frac{1}{i_{1}} \cdot \ldots
   \cdot \frac{1}{i_{k}} = c_{k,n} \leq (2e)^{n} \cdot \frac{k!}{n^{k}}
   \ \ \mbox{ for all } \ \ k,n \in \mathbb{N}. $$
\end{corollary}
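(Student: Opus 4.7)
The plan is to observe that the corollary is essentially a restatement of Lemma~\ref{lem1.}, once one computes the coefficients $c_{k,n}$ combinatorially. The only work is to identify the combinatorial expression for $c_{k,n}$, after which Lemma~\ref{lem1.} delivers the estimate immediately.

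First I would expand the $k$-th power
$$ \left( \sum_{i=1}^{\infty} \frac{x^{i}}{i} \right)^{k} = \sum_{i_{1}=1}^{\infty} \cdots \sum_{i_{k}=1}^{\infty} \frac{x^{i_{1}+\cdots+i_{k}}}{i_{1} \cdots i_{k}} $$
by the ordinary distributive law. Collecting terms according to the total degree $n = i_{1} + \cdots + i_{k}$ yields
$$ \left( \sum_{i=1}^{\infty} \frac{x^{i}}{i} \right)^{k} = \sum_{n=k}^{\infty} \left( \sum_{\substack{i_{1}+\cdots+i_{k}=n \\ i_{j}\geq 1}} \frac{1}{i_{1}\cdots i_{k}} \right) x^{n}, $$
so by comparison with the definition of $c_{k,n}$ given in Lemma~\ref{lem1.} we obtain the identity
$$ c_{k,n} = \sum_{i_{1}+\cdots+i_{k}=n} \frac{1}{i_{1}\cdots i_{k}}, $$
with the convention that $c_{k,n}=0$ when $n<k$ (which is compatible with the claimed bound since $k!/n^{k}$ is finite anyway).

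Then the asserted inequality is exactly the conclusion of Lemma~\ref{lem1.}, and there is nothing further to prove. There is no real obstacle here: the step that carries all the analytic content is Lemma~\ref{lem1.} itself, whose proof rests on a Cauchy estimate for $-\log(1-z)$ on the disk $|z|\leq 1/2$ together with the elementary inequality $e^{n} > n^{k}/k!$. The corollary is simply the combinatorial reading of the coefficients of that Taylor expansion.
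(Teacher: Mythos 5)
Your proof is correct and matches the paper's intent exactly: the paper presents the corollary as an immediate consequence of Lemma~\ref{lem1.}, the only content being the combinatorial identification of $c_{k,n}$ with the sum $\sum_{i_{1}+\cdots+i_{k}=n} 1/(i_{1}\cdots i_{k})$ via the distributive law, which is precisely what you spell out.
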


\vspace{1ex}

\begin{lemma}\label{lem2}
Let $p,k \in \mathbb{N}$ with $p > 1$, $k \geq 1$, and
$$ \alpha_{k}(X,x) := \frac{1}{k!} \left( X^{1/p} - x^{1/p}
   \right)^{k}
   \ \ \mbox{ for } \ \ X,x > 0, $$
where $\alpha_{k}$ is regarded as a function in one variable $X$
and parameter $x$. Then we have the estimate
$$ \left| \alpha_{k}^{(n)}(x,x) \right| \leq (2e)^{n} \cdot n^{n-k}
   \cdot x^{- \frac{pn-k}{p}} \ \ \mbox{ for all } \ \
   n,k \in \mathbb{N}, \ \ x > 0. $$
\end{lemma}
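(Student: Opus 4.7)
The plan is to pass from derivatives in $X$ to Taylor coefficients in the variable $u := X/x - 1$, and then bound those coefficients by the corresponding coefficients of the series $-\log(1-u)$, which are controlled by Lemma~\ref{lem1.}.

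First I would factor out the base point:
$$ X^{1/p} - x^{1/p} = x^{1/p}\bigl((X/x)^{1/p} - 1\bigr), $$
and set $u = X/x - 1$, so that $d/dX = (1/x)\,d/du$. Since $X=x$ corresponds to $u=0$, one obtains
$$ \alpha_{k}^{(n)}(x,x) = \frac{x^{(k-pn)/p}}{k!}\cdot n!\cdot d_{k,n}, \qquad d_{k,n}:=[u^{n}]\bigl((1+u)^{1/p}-1\bigr)^{k}. $$
Thus the whole problem is reduced to bounding the coefficient $|d_{k,n}|$. Note that if $n<k$ the factor $\alpha_{k}$ vanishes to order $k$ at $X=x$, so $d_{k,n}=0$ and the asserted inequality is trivial; one may therefore assume $n\geq k$.

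The second step estimates the generalized binomial coefficients
$$ \binom{1/p}{i} = \frac{(1/p)(1/p-1)\cdots(1/p-i+1)}{i!} $$
by $1/i$ for each $i\geq 1$: the leading factor $1/p$ is at most $1$, while $|1/p-j| = j-1/p < j$ for $j=1,\dots,i-1$, so the absolute value of the numerator is bounded by $(i-1)!$ and the whole expression by $1/i$. Consequently every coefficient of $((1+u)^{1/p}-1)^{k}$ is dominated in absolute value by the corresponding (non-negative) coefficient of $\bigl(\sum_{i\geq 1} u^{i}/i\bigr)^{k}$, whence
$$ |d_{k,n}| \;\leq\; c_{k,n}, $$
with $c_{k,n}$ exactly the quantity estimated by the Corollary to Lemma~\ref{lem1.}.

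Finally, inserting $c_{k,n}\leq (2e)^{n}\,k!/n^{k}$ and using the crude bound $n!\leq n^{n}$ gives
$$ |\alpha_{k}^{(n)}(x,x)|\leq \frac{x^{(k-pn)/p}}{k!}\cdot n!\cdot(2e)^{n}\cdot\frac{k!}{n^{k}} = (2e)^{n}\cdot\frac{n!}{n^{k}}\cdot x^{-(pn-k)/p} \leq (2e)^{n}\cdot n^{n-k}\cdot x^{-(pn-k)/p}, $$
which is the desired estimate. The one step that requires a little care is the coefficient-wise domination: the coefficients $\binom{1/p}{i}$ alternate in sign, so one must first pass to the majorant $\sum |{\textstyle\binom{1/p}{i}}|\,u^{i}$ (whose coefficients are non-negative) before raising to the $k$-th power and extracting $[u^{n}]$; this is precisely what allows the reduction to Lemma~\ref{lem1.}. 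After that observation the rest of the argument is a direct assembly.
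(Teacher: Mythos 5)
Your proposal is correct and follows essentially the same route as the paper: the paper expands $X^{1/p}-x^{1/p}$ about $X=x$ with coefficients $a_i=\pm\frac{1}{i!}\frac{(p-1)(2p-1)\cdots((i-1)p-1)}{p^{i}}$ (which are exactly your $\binom{1/p}{i}$), bounds $|a_i|\leq 1/i$, raises to the $k$-th power, and invokes Corollary~\ref{cor} together with $n!\leq n^{k}\cdot n^{n-k}$. Your substitution $u=X/x-1$ merely repackages the paper's bookkeeping of the powers of $x$, so the two arguments coincide.
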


\begin{proof}
Consider first the case $k=1$, \ $\alpha_{1}(X,x)= X^{1/p} -
x^{1/p}$. Then
$$ \alpha_{1}^{(n)}(x,x) = \pm \frac{(p-1)(2p-1) \cdot \ldots
   \cdot ((n-1)p -1)}{p^{n}} \cdot x^{- \frac{pn-1}{p}}, $$
whence
$$ \left| \alpha_{1}^{(n)}(x,x) \right| \leq (n-1)! \cdot x^{- \frac{pn-1}{p}}
   \leq n^{n-1} \cdot x^{- \frac{pn-1}{p}}, $$
as asserted. The Taylor expansion of $\alpha_{1}(X,x)$ at $X=x$ is
$$ \alpha_{1}(X,x) = \sum_{i=1}^{\infty} \, a_{i} \cdot x^{- \frac{pi-1}{p}}
   (X-x)^{i}, $$
where
$$ a_{i} := \pm \frac{1}{i!} \cdot \frac{(p-1)(2p-1) \cdot \ldots
   \cdot ((i-1)p -1)}{p^{i}}; $$
obviously, $|a_{i}| \leq (i-1)!/i! =1/i$. We thus get
$$ \alpha_{k}(X,x) = \frac{1}{k!} \left( \sum_{i=1}^{\infty} \, a_{i} \cdot x^{- \frac{pi-1}{p}}
   (X-x)^{i} \right)^{k} = \sum_{j=1}^{\infty} \, b_{j} \cdot x^{- \frac{pj-k}{p}}
   (X-x)^{j},  $$
where
$$ b_{j} := \frac{1}{k!} \sum_{i_{1}+\ldots+i_{k}=j} \, a_{i_{1}}
   \cdot \ldots \cdot a_{i_{k}}. $$
Then
$$ |b_{n} | \leq \frac{1}{k!} \sum_{i_{1}+\ldots+i_{k}=n} \,
   |a_{i_{1}}| \cdot \ldots \cdot |a_{i_{k}}| \leq \frac{1}{k!}
   \sum_{i_{1}+\ldots+i_{k}=n} \, \frac{1}{i_{1}} \cdot \ldots
   \cdot \frac{1}{i_{k}} = $$
$$  = \frac{c_{k,n}}{k!} \leq \frac{(2e)^{n}}{n^{k}}
   \ \ \mbox{ for all } \ \ n,k \in \mathbb{N}; $$
the last inequality follows from Corollary \ref{cor}. Hence
$$ \left| \alpha_{k}^{(n)}(x,x) \right| \leq n! \cdot | b_{n}|
   \cdot x^{- \frac{pn-k}{p}} \leq (2e)^{n} \cdot \frac{n!}{n^{k}}
   \cdot x^{- \frac{pn-k}{p}} \leq (2e)^{n} \cdot n^{n-k} \cdot x^{- \frac{pn-k}{p}} $$
for all $n,k \in \mathbb{N}$, $x > 0$, as asserted.
\end{proof}

\vspace{2ex}

Now we can readily pass to the proof of Theorem
\ref{substitution}. We shall work with estimates corresponding to
the sequence $M'$. So suppose that
$$ \left| F^{(n)}(\xi) \right| \leq A^{n} M'_{n} $$
for all $n \in \mathbb{N}$, $\xi \in I$ and some constant $A>0$.
We are going to estimate the growth of the $n$-th derivative
$f^{(n)}$. Fix $n \in \mathbb{N}$ and put:
$$ p_{n}(x) := T_{0}^{n}f(x) = \sum_{k=0}^{n-1} \, f^{k}(0)
   \frac{x^{k}}{k!}, \ \ \ r_{n}(x) := f(x) - p_{n}(x), $$
$$ P_{n}(\xi) := p_{n}(\xi^{p}) \ \ \mbox{ and } \ \ R_{n}(\xi) :=
   r_{n}(\xi^{p}). $$
Obviously,
$$ P_{n}^{(pn)} \equiv 0 \ \ \mbox{ and } \ \ R_{n}^{(pn)} \equiv
   F^{(pn)}. $$
From the Taylor formula, we therefore obtain the estimate
$$ \left| R_{n}^{(pn)}(\xi) \right| \leq A^{pn} M'_{pn} \cdot
   \frac{\xi^{pn-k}}{(pn-k)!} $$
for all $k < pn$, $\xi \in I$. We still need an elementary
inequality
\begin{equation}\label{element}
   \frac{1}{(pn-k)!} \leq \frac{e^{pn}}{n^{pn-k}} \ \ \mbox{ for
   all } \ \ k<pn.
\end{equation}
For, it suffices to show that
$$ \frac{e^{pn-k}}{(pn-k)^{pn-k}} \leq \frac{e^{pn}}{n^{pn-k}}. $$
When $pn-k \geq n$ or, equivalently, $k \leq (p-1)n$, the last
inequality is evident. Suppose thus that $pn-k<n$ or,
equivalently, $k>(p-1)n$. This inequality is, of course,
equivalent to
$$ \left( \frac{n}{pn-k} \right) ^{pn-k} \leq e^{k}, $$
which holds as shown below:
$$ \left( \frac{n}{pn-k} \right) ^{pn-k} =
   \left( 1 + \frac{k-(p-1)n}{pn-k} \right)^{pn-k} = $$
$$  = \left[ \left( 1 + \frac{k-(p-1)n}{pn-k}
   \right)^{\frac{pn-k}{k-(p-1)n}} \right] ^{k-(p-1)n}< e^{k}. $$

\vspace{1ex}

Now, the foregoing estimate along with inequality \eqref{element}
yield
$$ \left| R_{n}^{(pn)}(\xi) \right| \leq A^{pn} M'_{pn} \cdot \frac{e^{pn}}{n^{pn-k}}
    \cdot \xi^{pn-k}. $$
Applying the formula for the derivatives of a composite function,
we obtain
$$ r_{n}^{(n)}(x) = \sum_{k=1}^{n} \, R_{n}^{(k)}(\xi) \cdot
   \alpha_{k}^{(n)}(x,x). $$
Hence and by Lemma \ref{lem2}, we get
$$ \left| f^{(n)}(x) \right| = \left| r_{n}^{(n)}(x) \right| \leq
   \sum_{k=1}^{n} \, A^{pn} M'_{pn} \cdot \frac{e^{pn}}{n^{pn-k}}
   \cdot |\xi|^{pn-k} \cdot (2e)^{n} n^{n-k} \cdot |\xi|^{-(pn-k)} = $$
$$ = n \cdot (2e)^{n} \cdot (eA)^{pn} \cdot \frac{M'_{pn}}{n^{(p-1)n}}, $$
which completes the proof of Theorem \ref{substitution}.
\end{proof}

\vspace{2ex}

Finally, we show that, when the sequence $M'$ is log-convex,
$\mathcal{Q}(I,M^{(2)})$ is the smallest Denjoy--Carleman class
containing all smooth functions $f(x)$ on the interval $I =[0,1]$
such that $F(\xi) = f(\xi^{2}) \in \mathcal{Q}(I,M)$. We make use
of a classical function constructed by Bang~\cite{Ba}, applied in
his proof that the classes determined by log-convex sequences
contain functions with sufficiently large derivatives (the result
due to Cartan~\cite{C} and Mandelbrojt~\cite{C-M}; see also
\cite{Thil}, Section~1, Theorem~1).

\vspace{1ex}

The logarithmic convexity of the sequence $M'$ yields for every
$j,k \in \mathbb{N}$ the inequality
$$ \left( \frac{1}{m_{k}} \right)^{k-j} \leq \frac{M'_{j}}{M'_{k}} \ \
   \mbox{ where } \ \ m_{k} := \frac{M'_{k+1}}{M'_{k}}. $$
Consequently,
$$ F(\xi) := \sum_{k=0}^{\infty} \, \frac{M'_{k}}{(2m_{k})^{k}}
   \cos \, (2m_{k}\xi) $$
is an even smooth function on $\mathbb{R}$ such that
$$ F(\xi) \in \mathcal{Q}(\mathbb{R},M) \ \ \mbox{ and } \ \
   \left| F^{(2n)}(0) \right| \geq M'_{2n} $$
for all $n \in \mathbb{N}$. Therefore $F(\xi) = f(\xi^{2})$ for
some smooth function $f$ on $\mathbb{R}$ (cf.~\cite{Whi}), and we
get
$$ f^{(n)}(0) = \frac{n!}{(2n)!} F^{(2n)}(0) \ \ \mbox{ and } \ \
   \left| f^{(n)}(0) \right| \geq \frac{n! M'_{2n}}{(2n)!}, $$
which is the desired result.

\vspace{2ex}

We conclude this section with some examples, one of which (namely,
for $k=2$) will be applied to the construction of our
counterexample in the last section.

\vspace{1ex}

\begin{example}\label{example} Fix an integer $k \in \mathbb{N}$, $k \geq 1$, and put
$$ \log^{(k)} := \underbrace{\log \circ \ldots \circ \log}_{k
   \mbox{ \scriptsize times}}, \ \ \mbox{ and } \ \
   e\uparrow \uparrow k := (\underbrace{\exp \circ \ldots \circ \exp}_{k
   \mbox{ \scriptsize times}}) (1); $$
let $n_{k}$ be the smallest integer greater than $e \uparrow
\uparrow k$. Then the sequence
$$ \left( \log^{(k)} n \right)^{n} \ \ \mbox{ for } \ \ n \geq n_{k} $$
is log-convex. Further, the shifted sequence:
$$ M=(M_{n}), \ \ M_{n} := \frac{1}{\left( \log^{(k)} n_{k} \right)^{n_{k}} } \cdot
   \left( \log^{(k)} (n_{k} +n) \right)^{(n_{k}+n)}, $$
determines a quasianalytic class closed under derivatives; the
former follows from Cauchy's condensation criterion. It is easy to
check that the sequences $M^{(p)}$, $p>1$, are quasianalytic when
$k>1$, but are not quasianalytic when $k=1$.
\end{example}

\section{Non-extendability of quasianalytic germs}

In this section we are concerned with a result by
V.~Thilliez~\cite{Thil-2} on the extension of quasianalytic
function germs in one variable, recalled below. As before,
consider two log-convex sequences $M$ and $N$ with $M_{0}=N_{0}=1$
such that $\mathcal{Q}_{1}(M) \subset \mathcal{Q}_{1}(N)$. Denote
by $\mathcal{Q}_{1}(M)^{+}$ the local ring of right-hand side
germs at zero (i.e.\ germs of functions from
$\mathcal{Q}([0,\varepsilon],M)$ for some $\varepsilon >0$).

\vspace{2ex}

\begin{theorem}\label{non1}
If \ $\mathcal{Q}_{1}(N)$ is a quasianalytic local ring, then
$$ \mathcal{O}_{1} \varsubsetneq \mathcal{Q}_{1}(M) \subset
   \mathcal{Q}_{1}(N) \ \ \Longrightarrow \ \
   \mathcal{Q}_{1}(M)^{+} \setminus \mathcal{Q}_{1}(N) \neq \emptyset, $$
i.e.\ there exist right-hand side germs from
$\mathcal{Q}_{1}(M)^{+}$ which do not extend to germs from
$\mathcal{Q}_{1}(N)$. Here $\mathcal{O}_{1}$ stands for the local
ring of analytic function germs in one variable at $0 \in
\mathbb{R}$.
\end{theorem}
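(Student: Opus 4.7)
The plan is to argue by contradiction using Grothendieck's closed graph and factorization theorems for (LF)-spaces. First, I would equip $\mathcal{Q}_{1}(M)^{+}$ and $\mathcal{Q}_{1}(N)$ with their natural (LF)-space topologies, as inductive limits of the Banach spaces
$$ B^{+}_{R,\varepsilon}(M) := \Bigl\{ f \in C^{\infty}([0,\varepsilon]) : \|f\|^{+}_{R,\varepsilon} := \sup_{n,x} \frac{|f^{(n)}(x)|}{R^{n}\, n!\, M_{n}} < \infty \Bigr\} $$
and analogously $B_{R,\varepsilon}(N)$ on the two-sided interval $[-\varepsilon,\varepsilon]$. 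The restriction map $\rho \colon \mathcal{Q}_{1}(N) \to \mathcal{Q}_{1}(N)^{+}$ is continuous and, by quasianalyticity of $\mathcal{Q}_{1}(N)$, injective; the inclusion $\iota \colon \mathcal{Q}_{1}(M)^{+} \hookrightarrow \mathcal{Q}_{1}(N)^{+}$ induced by $\mathcal{Q}_{1}(M) \subset \mathcal{Q}_{1}(N)$ is also continuous.

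Next I would suppose for contradiction that every $f \in \mathcal{Q}_{1}(M)^{+}$ extends to some $\tilde{f} \in \mathcal{Q}_{1}(N)$. By quasianalyticity of $\mathcal{Q}_{1}(N)$ the extension $\tilde{f}$ is unique, producing a linear operator $E \colon \mathcal{Q}_{1}(M)^{+} \to \mathcal{Q}_{1}(N)$ with $\rho \circ E = \iota$. If $f_{n} \to f$ in $\mathcal{Q}_{1}(M)^{+}$ and $E f_{n} \to g$ in $\mathcal{Q}_{1}(N)$, then $\rho(g) = \lim \rho(E f_{n}) = \lim \iota(f_{n}) = \iota(f) = \rho(E f)$, and injectivity of $\rho$ forces $g = E f$; hence $E$ has closed graph. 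Grothendieck's closed graph theorem for (LF)-spaces then yields continuity of $E$, and his factorization theorem supplies constants $R, R', \varepsilon, \varepsilon', K > 0$ with $E(B^{+}_{R,\varepsilon}(M)) \subset B_{R',\varepsilon'}(N)$ and a uniform estimate
$$ \|E f\|_{R',\varepsilon'} \leq K \, \|f\|^{+}_{R,\varepsilon} \quad \text{for all } f \in B^{+}_{R,\varepsilon}(M). $$

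It remains to refute this uniform bound. Since $\mathcal{O}_{1} \subsetneq \mathcal{Q}_{1}(M)$ we have $\sup_{n} \sqrt[n]{M_{n}} = \infty$, so $M$ is strictly super-geometric. Exploiting this, I would build a sequence $(f_{j}) \subset B^{+}_{R,\varepsilon}(M)$ of uniformly bounded $(R,\varepsilon)$-norm whose extensions $E f_{j}$ to $[-\varepsilon',\varepsilon']$ — determined by the Taylor series of $f_{j}$ at $0$ via quasianalyticity — have $B_{R',\varepsilon'}(N)$-norm blowing up. Natural candidates are one-sided variants of Bang's oscillatory sum $\sum_{k} \frac{M'_{k}}{(2 m_{k})^{k}} \cos(2 m_{k} x)$ recalled at the end of Section~6, truncated or shifted so as to remain in $\mathcal{Q}_{1}(M)^{+}$ while producing formal power series of $M$-class whose two-sided realisations in $\mathcal{Q}_{1}(N)$ would require derivatives of arbitrarily large $N$-norm.

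The principal obstacle is this last step. The subtlety is that $\mathcal{Q}_{1}(M) \subset \mathcal{Q}_{1}(N)$ forces $M_{n} \leq C^{n} N_{n}$, so a naive Bang function on $[0,\varepsilon]$ already lies within the $N$-growth tolerance; the contradiction must instead exploit the structural asymmetry that the image of the Taylor map $\mathcal{Q}_{1}(N) \to \mathbb{R}[[x]]$ is strictly smaller than the set of all $N$-bounded formal series (quasianalyticity combined with the failure of Borel's theorem in quasianalytic classes), whereas every $M$-bounded formal series is realised one-sidedly in $\mathcal{Q}_{1}(M)^{+}$ via Bang's construction. Making this gap quantitative, so as to defeat the factorization estimate, is the crux of the proof and the point where the non-trivial inclusion $\mathcal{O}_{1} \subsetneq \mathcal{Q}_{1}(M)$ is genuinely used.
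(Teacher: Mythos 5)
Your functional-analytic setup is exactly right and matches the paper's (which adapts Thilliez's argument): endow $\mathcal{Q}_{1}(M)^{+}$ and the target ring with inductive limit topologies of Banach spaces with compact linking maps, note that restriction is continuous and injective by quasianalyticity, and invoke Grothendieck's open mapping (equivalently, closed graph) and factorization theorems to extract a uniform estimate $\|Ef\|_{N,[-1/l,1/l],l} \leq A\,\|f\|_{M,[0,1],1}$. Up to that point there is no gap.

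The gap is the final step, which you explicitly leave unresolved, and the route you sketch for it is not the one that works. You propose to exhibit Bang-type functions of bounded one-sided $M$-norm whose two-sided extensions have exploding $N$-norm; as you yourself observe, this is blocked by $M_{n} \leq C^{n} N_{n}$ --- no argument based purely on derivative-growth of the extension can succeed, since the $N$-class is the larger one. The paper's contradiction is of an entirely different nature: it tests the factorization estimate on \emph{polynomials} $P$, for which the extension is the polynomial itself, giving $|P(-1/l)| \leq A\,\|P\|_{M,[0,1],1}$. The hypothesis $\mathcal{O}_{1} \varsubsetneq \mathcal{Q}_{1}(M)$ forces $\sqrt[n]{M_{n}} \to \infty$, so $B := \sup_{n} (2l)^{n}/M_{n} < \infty$, and Cauchy's inequalities then give $\|P\|_{M,[0,1],1} \leq B \sup_{W}|P|$ where $W$ is the closed $1/(2l)$-neighbourhood of $[0,1]$ in $\mathbb{C}$. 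Hence $|P(-1/l)| \leq AB \sup_{W}|P|$ for all polynomials, with $-1/l \notin W$ --- and Runge approximation produces polynomials tending to $0$ uniformly on $W$ while tending to $1$ at $-1/l$, a contradiction. So the decisive mechanism is pointwise control at an exterior point versus Runge approximation, not norm blow-up of extensions; without this (or an equivalent) your proof does not close.
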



\begin{remark}\label{rem3} Theorem \ref{non1} may be related to
the research by M.~Langenbruch~\cite{Lb} on the extension of
ultradifferentiable functions in several variables, principally
focused on the non-quasianalytic case, which seems to be more
difficult in this context. His extension problem is, roughly
speaking, as follows:

\vspace{1ex}

\begin{em}
Given two compact convex subsets $K,K_{1}$ of $\mathbb{R}^{m}$
such that $\mbox{\rm int}\, (K) \neq \emptyset$ or $K = \{ 0 \}$
and $K \subset \mbox{\rm int}\, (K_{1})$, characterize the
sequences of positive numbers $M$ and $N$ such that every function
from the class $\mathcal{Q}(K,M)$ extends to a function from
$\mathcal{Q}(K_{1},N)$.
\end{em}
\end{remark}


M.~Langenbruch applies, however, different methods and techniques
in comparison with V.~Thilliez. In particular, his approach is
based on the theory of Fourier transform and plurisubharmonic
functions.

\vspace{1ex}

On the other hand, Thilliez's approach relies on Grothendieck's
version of the open mapping theorem (cf.~\cite{Gro}, Chap.~4,
Part~1, Theorem~2 or \cite{M-V}, Part~IV, Chap.~24) and Runge
approximation. It also enables a formulation of the
non-extendability theorem for quasianalytic function germs on a
compact convex subset $K \subset \mathbb{R}^{m}$ with $0 \in K$.

\vspace{1ex}

Nevertheless, in order to construct our counterexample in the next
section, we need a refinement of Theorem \ref{non1}, stated below.
Thilliez's proof can be adapted mutatis mutandis. We shall outline
it for the reader's convenience. Consider an increasing countable
family $M^{[p]}$, $p\in \mathbb{N}$, of log-convex sequences,
i.e.\
$$ 1= M^{[p]}_{0} \leq M^{[p]}_{1} \leq M^{[p]}_{2} \leq M^{[p]}_{3} \leq
   \ldots\ \ \ \mbox{ and } \ \ M^{[p]}_{j} \leq M^{[q]}_{j} $$
for all $j,p,q \in \mathbb{N}$, $p \leq q$. Then we receive an
ascending sequence of local rings
$$ \mathcal{Q}_{1}(M^{[1]}) \subset \mathcal{Q}_{1}(M^{[2]}) \subset
   \mathcal{Q}_{1}(M^{[3]}) \subset \ldots $$
such that $\mathcal{Q}_{1}(M^{[p]})$ is dominated by
$\mathcal{Q}_{1}(M^{(q)})$ for all $p,q \in \mathbb{N}$ with $p
\leq q$.

\vspace{2ex}

\begin{theorem}\label{non2}
If every local ring $\mathcal{Q}_{1}(M^{[p]})$ is quasianalytic,
then
$$ \mathcal{O}_{1} \varsubsetneq \mathcal{Q}_{1}(M) \subset \bigcup_{p \in \mathbb{N}} \,
   \mathcal{Q}_{1}(M^{[p]}) \ \ \Longrightarrow \ \
   \mathcal{Q}_{1}(M)^{+} \setminus \bigcup_{p \in \mathbb{N}} \,
   \mathcal{Q}_{1}(M^{[p]}) \neq \emptyset. $$
\end{theorem}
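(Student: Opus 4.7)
The plan is to adapt Thilliez's argument for Theorem~\ref{non1}, the essential new ingredient being that the countable union $\mathcal{F} := \bigcup_{p \in \mathbb{N}} \mathcal{Q}_{1}(M^{[p]})$ is still an (LF)-space. Each $\mathcal{Q}_{1}(M^{[p]})$ is itself a countable inductive limit of Banach spaces $B_{p,k,R}$ of smooth functions on $[-1/k,1/k]$ with norm $\|f\| = \sup_{n,x}\, |f^{(n)}(x)|/(R^{n}\, n!\, M^{[p]}_{n})$; a diagonal enumeration over the triples $(p,k,R)$ exhibits $\mathcal{F}$ as one countable inductive limit of Banach spaces, hence as an (LF)-space. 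The space $\mathcal{Q}_{1}(M)^{+}$ is topologized analogously, as an inductive limit of Banach spaces of one-sided germs with $M$-growth.

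Assuming for contradiction that $\mathcal{Q}_{1}(M)^{+} \subset \mathcal{F}$, quasianalyticity of every $\mathcal{Q}_{1}(M^{[p]})$ guarantees that the extension map $E \colon \mathcal{Q}_{1}(M)^{+} \to \mathcal{F}$, sending a right-hand germ to its unique bilateral representative in $\mathcal{F}$, is a well-defined linear operator. Since convergence in any Banach step of either space implies pointwise convergence of the underlying functions, $E$ has closed graph, and by the closed graph theorem for (LF)-spaces it is therefore continuous.

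Grothendieck's factorization theorem, applied to the continuous operator $E$ between (LF)-spaces, then forces some Banach step $B^{+}_{M,k_{0},R_{0}}$ of $\mathcal{Q}_{1}(M)^{+}$ to be mapped continuously into a single Banach step $B_{p_{0},k_{1},R_{1}}$ of $\mathcal{F}$. In other words, every right-hand germ of $(M,R_{0})$-class on $[0,1/k_{0}]$ extends to a bilateral germ in the Banach space $B_{p_{0},k_{1},R_{1}}$ with a uniform norm bound depending linearly on the germ. This is precisely the setting treated by Thilliez for a single target class, now with $N = M^{[p_{0}]}$, and the Runge-approximation contradiction with $\mathcal{O}_{1} \varsubsetneq \mathcal{Q}_{1}(M)$ from \cite{Thil-2} then applies verbatim.

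The main obstacle lies in the functional-analytic setup of the first two paragraphs: one must verify that the diagonal enumeration produces a Hausdorff (LF)-topology on $\mathcal{F}$ whose bounded sets admit the required description, and that $E$ really has closed graph in this combined topology (so that factorization through a single Banach step is legitimate). Once these technicalities are in place, the reduction to Thilliez's original statement is immediate, so no genuinely new geometric or analytic idea beyond (LF)-space theory is needed.
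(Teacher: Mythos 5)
Your proposal is correct and follows essentially the same route as the paper: realize $\bigcup_{p}\mathcal{Q}_{1}(M^{[p]})$ as a countable inductive limit of Banach steps, use Grothendieck's (LF)-space machinery to localize the extension problem to a single Banach step with a uniform bound, and then contradict that bound via Cauchy estimates and Runge approximation using $\mathcal{O}_{1}\varsubsetneq\mathcal{Q}_{1}(M)$. The only (harmless) difference is that you run the closed graph theorem on the extension operator $E$, whereas the paper applies the open mapping and factorization theorems to the continuous, injective restriction operator $R$ from $\bigcup_{p}\mathcal{Q}_{1}(M^{[p]})\cap\mathcal{Q}_{1}(M)^{+}$ onto $\mathcal{Q}_{1}(M)^{+}$ — two equivalent faces of the same Grothendieck--De Wilde theory.
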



\begin{proof}
We adopt the following notation. For a smooth function $f$ on an
interval $I \subset \mathbb{R}$ and $r>0$, put
$$ \| f \|_{M,I,r} := \sup \, \left\{ \frac{\left| f^{(n)}(x) \right|}{r^{n} n!
   M_{n}}: \ n \in \mathbb{N}, \ x \in I \right\}. $$
For $k \in \mathbb{N}$, $k>0$, let \ $ B_{k}(M)$ \ or \
$B_{k}(M)^{+}$, respectively, denote the Banach space with norm
$$ \| \cdot \|_{M,[-1/k,1/k],k}  \ \ \mbox{ or } \ \ \| \cdot \|_{M,[0,1/k],k}, $$
of those smooth functions on the interval $[-1/k,1/k]$ or
$[0,1/k]$) such that
$$ \| f \|_{M,[-1/k,1/k],k} < \infty \ \ \mbox{ or } \ \
  \| f \|_{M,[0,1/k],k} < \infty, \ \ \mbox{respectively.} $$

As the canonical embeddings
$$ B_{k}(M) \hookrightarrow B_{l}(M) \ \ \mbox{ and } \ \
   B_{k}(M)^{+} \hookrightarrow B_{l}(M)^{+}, \ \ k,l \in \mathbb{N}, \ k \leq l, $$
are compact linear operators (a consequence of Ascoli's theorem;
cf.~\cite{Kom0}), one can endow the local rings
$\mathcal{Q}_{1}(M)$ and $\mathcal{Q}_{1}(M)^{+}$ with the
inductive topologies. Similarly, the countable union of local
rings
$$ \bigcup_{p \in \mathbb{N}} \, \mathcal{Q}_{1}(M^{[p]}) $$
is the inductive limit of the sequence $B_{k}(M^{(k)})$, $k \in
\mathbb{N}$, $k>0$, of Banach algebras. Further, the local ring
$$ \bigcup_{p \in \mathbb{N}} \, \mathcal{Q}_{1}(M^{[p]}) \cap
   \mathcal{Q}_{1}(M)^{+} $$
is the inductive limit of the sequence
$$ B_{k}(M^{(k)}) \cap B_{k}(M)^{+}, \ \ \ k \in \mathbb{N}, \ k>0, $$
of Banach algebras with norms
$$ \| f \|_{k} := \max \, \{ \| f \|_{M^{[k]},[-1/k,1/k],k}\, , \
   \| f \|_{M,[0,1/k],k} \}. $$
Clearly, the restriction operator
$$ R: \bigcup_{p \in \mathbb{N}} \, \mathcal{Q}_{1}(M^{[p]}) \cap
   \mathcal{Q}_{1}(M)^{+} \longrightarrow \mathcal{Q}_{1}(M)^{+} $$
is continuous and injective by quasianalyticity. We must show that
$R$ is not surjective.

\vspace{1ex}

To proceed with {\em reduction ad absurdum}, suppose that $R$ is
surjective. By Grothendieck's version of the open mapping theorem
(cf.~\cite{Gro}, Chap.~4, Part~1, Theorem~2 or \cite{M-V},
Part~IV, Chap.~24), the operator $R$ is a homeomorphism onto the
image. Further, by Grothendieck's factorization theorem ({\em
loc.~cit.\/}), for each $k \in \mathbb{N}$ there is an $l \in
\mathbb{N}$ and a constant $C>0$ such that
$$ R \left( B_{l}(M^{(l)}) \right) \supset R \left( B_{l}(M^{(l)}) \cap
   B_{l}(M)^{+} \right) \supset B_{k}(M)^{+}, $$
and
$$ \| R^{-1} f \|_{M^{(l)},[-1/l,1/l],l} \: \leq \:
   \| R^{-1} f \|_{l} \: \leq \: C \| f \|_{M,[0,1/k],k} $$
for all $f \in B_{k}(M)^{+}$. In particular, there is an $l \in
\mathbb{N}$ and a constant $A>0$ such that
$$ R \left( B_{l}(M^{(l)}) \right) \supset B_{1}(M)^{+}, $$
and
$$ \| R^{-1} f \|_{M^{(l)},[-1/l,1/l],l} \: \leq \:
   A \| f \|_{M,[0,1],1} $$
for all $f \in B_{1}(M)^{+}$. In particular,
$$ \left| P\left(-\frac{1}{l}\right) \right| \leq A \| P
   \|_{M,[0,1],1} $$
for every polynomial $P \in \mathbb{C}[x]$. Put
$$ W := \left\{ z \in \mathbb{C}: \: \mbox{dist}\, (z,[0,1]) \leq \frac{1}{2l} \right\}
$$
and
$$ B := \sup \, \left\{  \frac{(2l)^{n}}{M_{n}}: \: n \in
   \mathbb{N} \right\} < \infty; $$
the last inequality holds because \ $\mathcal{O}_{1} \varsubsetneq
\mathcal{Q}_{1}(M)$ \ whence
$$ \sup \, \left\{  \sqrt[n]{M_{n}}: \  n \in \mathbb{N} \right\} = \infty. $$
It follows from Cauchy's inequalities that
$$ \sup \, \left\{ \left| P^{(n)}(x) \right| : x \in [0,1] \right\} \leq n!
   (2l)^{n} \sup \, \{ |P(x)|: x \in W \}, $$
and hence
$$ \| P \|_{M,[0,1],1} \leq B \sup \, \{ |P(x)|: x \in W \}.  $$
Consequently,
$$ \left| P\left(-\frac{1}{l}\right) \right| \leq AB \sup \, \{ |P(x)|: x \in W \} $$
for every polynomial $P \in \mathbb{C} [x]$. But, by virtue of
Runge approximation, there exists a sequence of polynomials
$P_{\nu} \in \mathbb{C}[x]$ which converges uniformly to $0$ on
$W$, and to $1$ for $x=-1/l$. This contradicts the above estimate,
and thus the theorem follows.
\end{proof}

\section{Construction of a counterexample}

We give a counterexample indicating that quasianalytic structures,
unlike the classical structure $\mathbb{R}_{an}$, may not admit
quantifier elimination in the language augmented merely by the
reciprocal function $1/x$. The example we construct is a plane
curve through $0 \in \mathbb{R}^{2}$ which is definable in the
quasianalytic structure corresponding to the log-convex sequence
$$ M=(M_{n}), \ \ M_{n} := \frac{1}{\left( \log \log \, 3 \right)^{3} } \cdot
   \left( \log \log \, (n +3) \right)^{(n+3)}; $$
this sequence determines a quasianalytic class closed under
derivatives (cf.~example \ref{example}). By Theorem \ref{non2}, we
can take a function germ
$$ f \in \mathcal{Q}_{1}(M)^{+} \, \setminus \, \bigcup_{p \mbox{ \scriptsize odd}} \,
   \mathcal{Q}_{1}(M^{(p)}). $$
Let $V \subset \mathbb{R}^{2}$ be the graph of a representative of
this germ in a right-hand side neighbourhood $[0,\varepsilon]$.

\vspace{1ex}

To proceed with {\em reductio ad absurdum\/}, suppose $V$ is given
by a term in the language of restricted $\mathcal{Q}_{M}$-analytic
functions augmented merely by the reciprocal function $1/x$.
Taking into account Remark~\ref{rem-2.1}, we can thus deduce from
Theorem~\ref{rect-th} that there would exist a rectilinearization
of this term by a finite sequence of blow-ups of the real plane at
points. Consequently, the germ of $V$ at zero would be contained
in the image $\varphi([-1,1])$, where
$$ \varphi =(\varphi_{1},\varphi_{2}): [-1,1]
   \longrightarrow \mathbb{R}^{2}, \ \ \varphi(0)=0, $$
is a $\mathcal{Q}_{M}$-analytic homeomorphism. But then the order
of $\varphi_{1}$ at zero must be odd, and thus the set $V$ would
have a parametrization near zero of the form $(\xi^{p}, g(\xi))$,
where $p$ is an odd positive integer and $g$ is a
$\mathcal{Q}_{M}$-function in the vicinity of zero. Hence and by
Theorem \ref{substitution}, we would get
$$ f(x) = g(x^{1/p}) \in \mathcal{Q}_{1}(M^{(p)}), $$
which is a contradiction.


\begin{remark} By virtue of Puiseux's theorem for definable functions
(cf.~\cite{Now5}, Section~2), the germ of every smooth function in
one variable that is definable in the structure
$\mathbb{R}_{\mathcal{Q}_{M}}$ belongs to
$\mathcal{Q}_{1}(M^{(p)})$ for some positive integer $p$.
Therefore the structure under study will not admit quantifier
elimination, even considered with the richer language of
restricted definable quasianalytic functions augmented by the
reciprocal function $1/x$.
\end{remark}

\begin{remark}
Also, our counterexample demonstrates that the classical theorem
of \L{}ojasiewicz~\cite{Loj} that every subanalytic set of
dimension $\leq 1$ is semianalytic is no longer true in
quasianalytic structures.
\end{remark}

\vspace{2ex}

{\bf Acknowledgements.} This research was partially supported by
Research Project No.\ N N201 372336 from the Polish Ministry of
Science and Higher Education.

\vspace{5ex}

\vspace{5ex}

\begin{small}
Institute of Mathematics

Faculty of Mathematics and Computer Science

Jagiellonian University

ul.~Profesora \L{}ojasiewicza 6

30-348 Krak\'{o}w, Poland

e-mail:
\email{\textcolor[rgb]{0.00,0.00,0.84}{nowak@im.uj.edu.pl}}

\end{small}

\end{document}